\documentclass[preprint,12pt]{elsarticle}

\usepackage{amsmath,amssymb,amsthm,mathtools,mathrsfs}
\usepackage{bm}
\usepackage{xcolor}
\usepackage{tikz}
\usetikzlibrary{positioning,arrows.meta,calc}
\usepackage{float}
\usepackage{graphicx}
\usepackage{enumitem}
\usepackage{microtype}
\usepackage{aliascnt}
\usepackage[hidelinks]{hyperref}
\usepackage[nameinlink,capitalize,noabbrev]{cleveref}
\journal{Analysis \& PDE}

\newtheorem{theorem}{Theorem}[section]

\newaliascnt{proposition}{theorem}
\newtheorem{proposition}[proposition]{Proposition}
\aliascntresetthe{proposition}

\newaliascnt{lemma}{theorem}
\newtheorem{lemma}[lemma]{Lemma}
\aliascntresetthe{lemma}

\newaliascnt{corollary}{theorem}
\newtheorem{corollary}[corollary]{Corollary}
\aliascntresetthe{corollary}

\newaliascnt{remark}{theorem}
\newtheorem{remark}[remark]{Remark}
\aliascntresetthe{remark}

\theoremstyle{definition}
\newaliascnt{definition}{theorem}
\newtheorem{definition}[definition]{Definition}
\aliascntresetthe{definition}

\newcommand{\Rh}{R}
\newcommand{\T}{T}
\newcommand{\M}{M}

\newcommand{\norm}[1]{\left\lVert #1\right\rVert}

\newcommand{\Op}{\operatorname{Op}}

\begin{document}

\begin{frontmatter}

\title{Uniform Eigenfunction Observability under Mixed Reflection Monodromy}

\author[aff1]{Binh T. Nguyen\corref{cor1}}
\ead{ngtbinh@hcmus.edu.vn}
\cortext[cor1]{Corresponding author}
\address[aff1]{Faculty of Mathematics and Computer Science, University of Science, Vietnam National University Ho Chi Minh City, Ho Chi Minh City, Vietnam}

\begin{abstract}
We study uniform observability for Laplace eigenfunctions with mixed boundary
conditions whose reflection signs do not define a scalar character. For the
DDN/NND sectors of the equilateral rhombus, the resulting \(\mathbb Z_2\)
monodromy is resolved by a degree-two branched arithmetic translation surface
of genus two. On an explicit class of admissible open sets, every exact
mixed-sector eigenfunction satisfies a local \(L^2\) lower bound uniform in the
eigenvalue, multiplicity, and choice within the eigenspace. The proof combines
exact unfolding, semiclassical defect measures, and the Veech dichotomy to
exclude concentration near the saddle/conic network and inside periodic
cylinders. We also obtain a reflection-overlap observability result for the
full rhombus and isolate the remaining saddle-network obstruction for arbitrary
open observation sets.
\end{abstract}

\begin{keyword}
uniform eigenfunction observability \sep semiclassical measures \sep
mixed boundary conditions \sep reflection monodromy \sep
translation surfaces \sep Veech dichotomy \sep spectral geometry

\MSC[2020] 35P05 \sep 35J25 \sep 58J50 \sep 58J51 \sep 93B07
\end{keyword}

\end{frontmatter}

\section{Introduction}
\label{sec:introduction}

Let \(\Omega\) be a bounded domain and \(W\subset\Omega\) an open observation
set. We ask when there is a constant \(c_W>0\), independent of the eigenvalue
and of the choice of vector in a possibly multiple eigenspace, such that
\begin{equation}
    \|u\|_{L^2(W)}^2\ge c_W\|u\|_{L^2(\Omega)}^2
    \qquad\text{for every Laplace eigenfunction }u.
    \label{eq:intro-observability}
\end{equation}
This stationary observability estimate rules out exceptional high-frequency
sequences of exact eigenfunctions whose mass avoids \(W\); it is stronger than
qualitative unique continuation but weaker than equidistribution.
For rational polygons, the reflection unfolding is part of the problem. When
the boundary signs define a scalar character of the reflection group, as in
pure Dirichlet or Neumann sectors, the scalar spectral problem may close on a
torus or twisted torus. We consider a mixed regime in which this closure
fails. The reflection signs have nontrivial \(\mathbb Z_2\) monodromy, and a
scalar eigenfunction cannot close consistently on the usual toric unfolding.

Reflection across the short diagonal of the equilateral rhombus decomposes
the Dirichlet and Neumann problems into the DDD/NNN and DDN/NND sectors on
an equilateral triangle. The pure sectors admit scalar-character unfoldings.
In the mixed sectors, the reflection monodromy is nontrivial; it is
trivialized on a two-sheeted branched arithmetic translation surface of
genus two with two \(4\pi\) cone points. This cover has minimal possible
degree.
The directional flow on this surface satisfies the Veech dichotomy. We call
an open set admissible if its lift contains a fixed punctured neighborhood
of the cone set and meets every regular closed orbit in each completely
periodic direction. No corresponding condition is imposed in uniquely
ergodic directions. For every admissible set \(W\), the main theorem gives
\[
    \|u\|_{L^2(W)}^2
    \ge c_W\|u\|_{L^2(T)}^2
\]
for every DDN or NND eigenfunction \(u\), where \(c_W\) is independent of
the eigenvalue, its multiplicity, and the choice of \(u\) in the eigenspace.

The proof acts directly on exact eigensequences. A hypothetical dark sequence
lifts to a semiclassical defect measure on the branched surface. Away from the
cone set, propagation makes the measure invariant under directional flow. The
Veech dichotomy leaves two cases: unique ergodicity, which excludes a nonzero
dark invariant component, and complete periodicity, where invariant-measure
disintegration reduces the problem to maximal cylinders. The periodic-orbit
part of admissibility excludes the latter, while the punctured-cone condition
prevents residual mass from escaping through the saddle/conic network. This
is an all-eigenfunction statement, unlike density-one equidistribution on
rational polygons \cite{MarklofRudnick2012}; periodic-cylinder scarring at the
quasimode level \cite{FriedlandUeberschaer} also shows why the periodic
alternative cannot simply be ignored.

The mechanism differs from the toric observability available for pure
boundary conditions. Strong stationary and Schr\"odinger observability results
are known on flat tori \cite{BurqZworski2019,BurqGermainSorellaZhu2026}, and
Alphonse--Lafontaine obtain Schr\"odinger observability on the equilateral
triangle for pure Dirichlet and Neumann conditions by reduction to rational
twisted tori \cite{AlphonseLafontaine2025}. The DDN/NND sectors do not admit
that scalar toric reduction: non-character reflection monodromy changes the
phase space on which observability must be proved.

The admissible class is stable under enlargement and contains explicit open
families; in particular, complements of sufficiently small generic interior
holes are admissible. We also recombine the mixed and pure sectors to obtain a
reflection-overlap observability theorem on the full rhombus and a strict
fixed-frequency angle between the relevant parity restrictions. For arbitrary
nonempty open observation sets, the argument isolates one remaining issue:
exact-spectral transfer of mass through the saddle/conic network at
frequency-dependent scales. We record this reduction separately; it is not
used in the unconditional results.

\subsection*{Main results and proof architecture}

The paper has four main steps. First, we identify the DDN/NND sectors as
non-character reflection problems and construct their minimal-degree scalar
resolution on the branched genus-two translation surface. Second, a dark exact
eigensequence is reduced to an invariant semiclassical measure away from the
cone set. Third, the Veech dichotomy and invariant-measure disintegration
exclude mass in uniquely ergodic directions and in periodic cylinders under
the admissibility hypothesis. Fourth, the mixed-sector estimate is recombined
with the pure sectors to obtain the full-rhombus result. The same analysis
separates the two high-frequency obstructions, the saddle/conic network and
periodic cylinders, and reduces the arbitrary-open endpoint to the former.

The rest of the paper reviews the closest literature, introduces the geometry
and notation, states the main results, constructs the exact unfolding, proves
the observability theorem, returns to the full rhombus, and finally discusses
the arbitrary-open saddle-network obstruction.


\section{Related work}
\label{sec:related-work}

\paragraph{Tori and the equilateral triangle}
Burq--Zworski proved Schr\"odinger observability on two-dimensional tori for
rough localization functions \cite{BurqZworski2019}, and
Burq--Germain--Sorella--Zhu obtained stationary trace and observability
inequalities for toral Laplace eigenfunctions with respect to general Borel
measures \cite{BurqGermainSorellaZhu2026}. For the equilateral triangle,
Alphonse--Lafontaine proved Schr\"odinger observability for pure Dirichlet and
Neumann conditions by a tiling reduction to a rational twisted torus
\cite{AlphonseLafontaine2025}. These results cover the toric mechanism relevant
to the pure DDD/NNN sectors. The mixed DDN/NND sectors considered here do not
admit the same scalar torus realization because their reflection signs carry
nontrivial local \(\mathbb Z_2\) monodromy.

\paragraph{Mixed boundary conditions and polygonal concentration}
Mixed Dirichlet--Neumann spectra on triangles and reflection-group domains
have been studied, among others, by Siudeja \cite{Siudeja2016} and Stempak
\cite{Stempak2022}. The latter treats sign assignments induced by characters
of finite reflection groups; the DDN/NND patterns here lie outside that
framework. For polygonal concentration, Hassell and collaborators proved that
every neighborhood of the complete vertex set captures a uniform positive
fraction of every eigenfunction \cite{HassellHillairetMarzuola2009}, while
Ceki\'c et al. developed related control estimates involving periodic tubes
and nonsmooth boundary skeletons \cite{CekicGeorgievMukherjee2020}. In our
branched unfolding these two themes reappear as control near the conic set and
interception of regular periodic cylinders.

\paragraph{Quantum limits, translation surfaces, and cone points}
Marklof--Rudnick proved configuration-space equidistribution along a
density-one subsequence on rational polygons \cite{MarklofRudnick2012}; recent
work of Hippi--Mikkelsen strengthens this density-one perspective for a class
of rational polygons \cite{HippiMikkelsen2026}. Uniform observability must also
control exceptional exact eigensequences. At the quasimode level,
Friedland--Uebersch\"ar exhibit scarring on periodic structures of translation
surfaces \cite{FriedlandUeberschaer}. The conic points created by our unfolding
also connect the problem with microlocal propagation on singular spaces; see
Yang \cite{Yang2022ProductCones} and Hintz \cite{Hintz2024Cone}. Our
punctured-cone hypothesis removes limiting dark mass from this singular
region, after which the argument uses the regular directional dynamics.

\paragraph{Position of the present result}
The equilateral rhombus is the explicit model, but the structural distinction
used here is between reflection data that close to a scalar character and
reflection data with nontrivial scalar monodromy. In the former case a toric
or twisted-toric realization may be available; in the latter the scalar phase
space must first be changed. The DDN/NND sectors give an explicit instance in
which this produces a higher-genus translation surface and forces the
observability argument to combine singular-set control with directional
dynamics. We do not claim a theorem for arbitrary non-character reflection
systems. Rather, the present model is one in which the exact unfolding and the
full high-frequency reduction can both be carried out. The extension from
admissible sets to arbitrary nonempty open observations is reduced separately
to a quantitative saddle-network problem.


\section{Geometry and mixed reflection monodromy}
\label{sec:geometry}
Let \(\Rh\) be the equilateral \(60^\circ\)-\(120^\circ\) rhombus and let
\(\M\) denote its short diagonal.  Reflection across \(\M\) is denoted by
\(\sigma\).  Cutting \(\Rh\) along \(\M\) produces two congruent equilateral
triangles; throughout the paper \(\T\) denotes one of them and \(\T^\circ\) its interior.  The two sides of
\(\partial\T\) inherited from \(\partial\Rh\) are called the \emph{outer}
sides, while the third side is \(\M\).  The geometry and the parity-induced condition on \(\M\) are summarized in \cref{fig:parity-reduction}.

\begin{figure}[t]
\centering
\begin{tikzpicture}[
    scale=0.92,
    >=Latex,
    font=\small,
    every node/.style={inner sep=1.5pt}
]

\coordinate (A) at (-6.0,0.5);
\coordinate (B) at (-4.2,2.25);
\coordinate (C) at (-2.4,0.5);
\coordinate (D) at (-4.2,-1.25);

\draw[line width=1.1pt]
    (A)--(B)--(C)--(D)--cycle;

\draw[line width=0.95pt,densely dashed]
    (B)--(D);

\node[left]  at (A) {$A$};
\node[above] at (B) {$B$};
\node[right] at (C) {$C$};
\node[below] at (D) {$D$};

\node[left] at (-4.35,0.55) {$M$};

\node[align=center] at (-4.2,3.05)
{
outer boundary condition\\
$\mathsf B\in\{D,N\}$
};

\node[align=center] at (-4.2,-2.05)
{
$60^\circ$--$120^\circ$ rhombus $R$
};

\draw[->,line width=0.9pt]
    (-2.10,0.5) -- (-0.80,0.5)
    node[midway,above=3pt,align=center]
    {restrict to\\one half};

\coordinate (T1) at (-0.20,-0.45);
\coordinate (T2) at (1.25,2.25);
\coordinate (T3) at (2.70,-0.45);

\draw[line width=1.1pt]
    (T1)--(T2)--(T3);

\draw[line width=0.95pt,densely dashed]
    (T3)--(T1);

\node[below left]  at (T1) {$Q$};
\node[above]       at (T2) {$P$};
\node[below right] at (T3) {$R$};

\node[left]  at (0.45,1.00) {$\mathsf B$};
\node[right] at (2.05,1.00) {$\mathsf B$};

\node[below] at (1.25,-0.50) {$M$};

\node[align=center] at (1.25,-1.45)
{
equilateral triangle $T$
};

\coordinate (Branch) at (3.55,0.50);
\fill (Branch) circle (2pt);

\node[align=center,right=5pt of Branch]
{
parity\\
across $M$
};

\draw[line width=0.8pt]
    (2.85,0.50)--(Branch);

\coordinate (E1) at (5.35,2.10);
\coordinate (E2) at (6.35,3.30);
\coordinate (E3) at (7.35,2.10);

\draw[->,line width=0.9pt]
    (Branch) to[out=40,in=195] (5.00,2.30);

\draw[line width=1.05pt]
    (E1)--(E2)--(E3);

\draw[line width=0.95pt,densely dashed]
    (E3)--(E1);

\node[below left]  at (E1) {$Q$};
\node[above]       at (E2) {$P$};
\node[below right] at (E3) {$R$};

\node[left]  at (5.72,2.66) {$\mathsf B$};
\node[right] at (6.98,2.66) {$\mathsf B$};

\node[above=9pt] at (E2)
    {$\sigma$-even};

\node[below=2pt] at (6.35,2.05)
    {$M$};

\node[below=20pt,align=center] at (6.35,2.10)
{
\textbf{Neumann on $M$}\\[-1pt]
$\partial_\nu u|_M=0$
};

\coordinate (O1) at (5.35,-2.10);
\coordinate (O2) at (6.35,-0.90);
\coordinate (O3) at (7.35,-2.10);

\draw[->,line width=0.9pt]
    (Branch) to[out=-40,in=165] (5.00,-1.90);

\draw[line width=1.05pt]
    (O1)--(O2)--(O3);

\draw[line width=1.05pt]
    (O3)--(O1);

\node[below left]  at (O1) {$Q$};
\node[above]       at (O2) {$P$};
\node[below right] at (O3) {$R$};

\node[left]  at (5.72,-1.54) {$\mathsf B$};
\node[right] at (6.98,-1.54) {$\mathsf B$};

\node[above=9pt] at (O2)
    {$\sigma$-odd};

\node[below=2pt] at (6.35,-2.15)
    {$M$};

\node[below=20pt,align=center] at (6.35,-2.10)
{
\textbf{Dirichlet on $M$}\\[-1pt]
$u|_M=0$
};

\end{tikzpicture}

\caption{
Parity reduction across the short diagonal.
The two outer sides of the half-triangle inherit the rhombus boundary
condition $\mathsf B$, while parity across $M$ determines the boundary
condition on $M$: even parity gives Neumann and odd parity gives Dirichlet.
Thus Dirichlet rhombus data split into DDN/DDD sectors, whereas Neumann
rhombus data split into NNN/NND sectors.
}
\label{fig:parity-reduction}
\end{figure}

\begin{proposition}[Parity decomposition]
\label{prop:parity-decomposition}
Let \(H_{\Rh,D}\) and \(H_{\Rh,N}\) be the Dirichlet and Neumann Laplacians on
\(\Rh\).  Reflection across \(\M\) commutes with both operators.  Hence, every
rhombus eigenspace decomposes orthogonally into the \(\pm1\) eigenspaces of
\(\sigma\).\\
After restriction to \(\T\), followed by multiplication by \(\sqrt2\), these
parity subspaces are unitarily identified as follows:
\[
 \begin{array}{c|cc}
 & \sigma\text{-even} & \sigma\text{-odd}\\ \hline
 H_{\Rh,D} & H_{DDN} & H_{DDD}\\
 H_{\Rh,N} & H_{NNN} & H_{NND}.
 \end{array}
\]
Here, the third boundary-condition letter is the condition on \(\M\).  In
particular, after these canonical identifications,
\[
 E_\lambda^{\Rh,D}\cong E_\lambda^{DDD}\oplus E_\lambda^{DDN},
 \qquad
 E_\lambda^{\Rh,N}\cong E_\lambda^{NNN}\oplus E_\lambda^{NND}.
\]
The identifications preserve eigenvalues and multiplicities.
\end{proposition}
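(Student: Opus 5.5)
The plan is to work at the level of quadratic forms, which avoids any regularity discussion at the corners. First I would verify that \(\sigma\) is a unitary involution of \(L^2(\Rh)\). Since \(\sigma\) is a Euclidean isometry mapping \(\Rh\) to itself, \(u\mapsto u\circ\sigma\) preserves \(L^2(\Rh)\), \(H^1(\Rh)\) and \(H^1_0(\Rh)\). It also preserves the Dirichlet form \(\int_\Rh|\nabla u|^2\). Both form domains are therefore invariant, and so are both forms. Hence \(\sigma\) commutes with the Friedrichs realizations \(H_{\Rh,D}\) and \(H_{\Rh,N}\), in the sense that it preserves their domains and intertwines them. Consequently each eigenspace \(E_\lambda^{\Rh,B}\) is \(\sigma\)-invariant. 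The projections \(P_\pm=\tfrac12(1\pm\sigma)\) are orthogonal and sum to the identity, which gives the orthogonal splitting into even and odd parts.

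Next I would define \(U u=\sqrt2\,u|_{\T}\) on the parity subspaces. For \(u\) of definite parity, \(\int_\Rh|u|^2=2\int_\T|u|^2\), and the same identity holds for the Dirichlet integral. So \(U\) is isometric from \(L^2_\pm(\Rh)\) into \(L^2(\T)\), and it scales the forms consistently. Its inverse is the even or odd reflection extension \(v\mapsto 2^{-1/2}\tilde v\). It remains to identify the form domains. I will treat the four sectors in turn.

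\begin{enumerate}[label=(\roman*)]
\item \textbf{Even extension, no condition on \(\M\).} The even extension of \(v\in H^1(\T)\) lies in \(H^1(\Rh)\), since traces from both sides match on \(\M\). Its restriction to the outer sides is the original trace. Thus even elements of \(H^1(\Rh)\), respectively \(H^1_0(\Rh)\), correspond exactly to \(H^1(\T)\), respectively to \(\{v\in H^1(\T): v=0 \text{ on the outer sides}\}\). These are the form domains of NNN and DDN.
\item \textbf{Odd extension, Dirichlet on \(\M\).} An odd \(H^1\) function has trace on \(\M\) equal to its own negative, so the trace vanishes. Conversely, the odd extension of \(v\) with \(v|_\M=0\) is in \(H^1\), because the traces agree, both being zero. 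This yields the NND and DDD form domains.
\end{enumerate}

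Since \(U\) is unitary between form domains and preserves the forms up to the same factor \(2\) that appears in the norms, the associated self-adjoint operators are unitarily equivalent. Hence \(U\) intertwines \(H_{\Rh,B}|_{L^2_\pm}\) with the sector operators. Eigenvalues and multiplicities are then preserved. The direct sum statements follow by combining this with the parity splitting.

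I expect the main obstacle to be the gluing step, namely checking that the reflected extension lies in \(H^1\) across \(\M\) and near the vertices \(B\) and \(D\). I would handle it by the standard fact that \(H^1\) functions on two Lipschitz pieces with equal traces on the common interface glue to an \(H^1\) function. The distributional gradient is then computed by integration by parts on each half, and the interface terms cancel. This avoids any pointwise normal-derivative argument.
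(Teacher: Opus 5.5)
Your proof is correct, and it takes a genuinely different route from the paper's.

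\textbf{The paper's route.} The paper argues eigenfunction by eigenfunction. It splits each eigenspace using the spectral theorem for the involution $U_\sigma$. It then reads off the condition on $\M$ for a restricted parity eigenfunction classically, on the relative interior of $\M$: in coordinates $(s,n)$ with $\sigma(s,n)=(s,-n)$, evenness forces $\partial_n u(s,0)=0$ and oddness forces $u|_\M=0$. For the converse, it extends a triangle eigenfunction by even or odd reflection, invokes matching weak Cauchy data across $\M$ to obtain a weak rhombus eigenfunction, and finishes with elliptic regularity. Unitarity of $u\mapsto\sqrt2\,u|_\T$ is then just the identity $\|u\|_{L^2(\Rh)}^2=2\|u|_\T\|_{L^2(\T)}^2$.

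\textbf{Your route.} You transport the closed quadratic forms instead. The trace-gluing lemma identifies $U\bigl(D(q_{\Rh,B})\cap L^2_\pm(\Rh)\bigr)$ with the four sector form domains. $U$ is isometric for both the $L^2$ norm and the Dirichlet integral. The representation theorem then gives unitary equivalence of the parts of $H_{\Rh,B}$ in $L^2_\pm(\Rh)$ with $H_{DDN}$, $H_{DDD}$, $H_{NNN}$, $H_{NND}$.

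\textbf{What each approach buys.} Your approach yields an operator-level statement, which is what the table literally asserts. The eigenspace isomorphisms and multiplicity preservation follow immediately. It needs no pointwise normal derivatives, no elliptic regularity, and no separate analysis at the endpoints of $\M$. The Neumann condition on $\M$ in the even sectors enters only as the absence of a trace constraint in $H^1$, which matches how the mixed Laplacians are defined elsewhere in the paper. The paper's route is more concrete about the classical meaning of the boundary condition on $\M$. However, both of its directions are form computations in disguise: showing that the restriction lies in the sector operator domain, and that the extension solves the rhombus problem against all form-domain test functions. For instance, the latter amounts to testing against $\varphi+\varphi\circ\sigma$. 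Your argument performs this computation once, for the whole space.

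\textbf{Points to state explicitly.} First, the part of $H_{\Rh,B}$ in $L^2_\pm(\Rh)$ is the operator of the restricted form. This holds because $\sigma$-invariance of the form gives $q(P_+u,P_-v)=q(U_\sigma P_+u,U_\sigma P_-v)=-q(P_+u,P_-v)$, so the form splits orthogonally. Second, in the DDD sector the odd identification also uses the outer-side trace condition from your case (i). Third, strictly speaking $H_{\Rh,N}$ is the operator associated with the closed form on $H^1(\Rh)$, not the Friedrichs extension of the minimal Laplacian; that is exactly what your argument uses, so only the terminology needs adjusting.
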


\begin{proof}
The reflection \(\sigma\) is an isometry of \(\Rh\) preserving
\(\partial\Rh\), and it preserves both the Dirichlet form domain
\(H_0^1(\Rh)\) and the Neumann form domain \(H^1(\Rh)\).  Consequently the
associated self-adjoint Laplacian commutes with the unitary involution
\(U_\sigma f=f\circ\sigma\).  The spectral theorem for the bounded
self-adjoint involution \(U_\sigma\) therefore gives an orthogonal splitting
of every Laplace eigenspace into its even and odd parity parts.

Let \(u\) belong to one of these parity subspaces and put \(v=u|_\T\).  On the
two outer sides, \(v\) inherits the boundary condition imposed on
\(\partial\Rh\).  On the relative interior of \(\M\), evenness gives
\(\partial_\nu v=0\): in local coordinates \((s,n)\) with \(\sigma(s,n)=(s,-n)\),
an even solution satisfies \(\partial_n u(s,0)=0\).  Oddness gives
\(v|_\M=0\). Thus, the four cases in the table follow.

Conversely, a triangle eigenfunction in any of the displayed sectors extends
across \(\M\) by the indicated even or odd reflection.  The reflected pieces
have matching weak Cauchy data across \(\M\), so the extension lies in the
appropriate rhombus form domain and satisfies the rhombus eigenvalue equation
weakly; elliptic regularity then gives the corresponding rhombus
eigenfunction.  Restriction and reflection are inverse to one another.
Finally, because the two halves of \(\Rh\) are exchanged by \(\sigma\) and
\(|u\circ\sigma|=|u|\),
\[
 \|u\|_{L^2(\Rh)}^2=2\|u|_\T\|_{L^2(\T)}^2.
\]
Hence, \(u\mapsto\sqrt2\,u|_\T\) is unitary on each parity subspace, proving
the stated identifications and multiplicity preservation.
\end{proof}

The four resulting triangle problems, with the convention that the third letter denotes the condition on \(\M\), are displayed in \cref{fig:four-sectors}.

\begin{figure}[t]
\centering
\begin{tikzpicture}[x=1cm,y=1cm,font=\small]
  \tikzset{
    Dedge/.style={line width=1.6pt,draw=red!70!black},
    Nedge/.style={line width=1.6pt,draw=blue!65!black,densely dashed},
    panel/.style={draw=black!25,rounded corners,inner sep=5pt}
  }
  \draw[Dedge] (0.15,4.55)--(1.00,4.55); \node[right] at (1.10,4.55) {Dirichlet ($D$)};
  \draw[Nedge] (4.85,4.55)--(5.70,4.55); \node[right] at (5.80,4.55) {Neumann ($N$)};
  \node at (4.35,4.05) {the horizontal side is the short diagonal $M$};

  \newcommand{\sectorpanel}[6]{%
    \begin{scope}[shift={(#1,#2)}]
      \node[panel,minimum width=4.05cm,minimum height=3.25cm] at (1.75,1.35) {};
      \coordinate (a) at (0,0); \coordinate (b) at (1.75,2.45); \coordinate (c) at (3.5,0);
      \draw[#3] (a)--(b);
      \draw[#4] (b)--(c);
      \draw[#5] (c)--(a);
      \node[font=\bfseries] at (1.75,2.86) {#6};
      \node[rotate=54] at (0.67,1.22) {outer};
      \node[rotate=-54] at (2.83,1.22) {outer};
      \node[below] at (1.75,-0.06) {$M$};
    \end{scope}
  }
  \sectorpanel{0}{0}{Dedge}{Dedge}{Dedge}{DDD}
  \sectorpanel{4.7}{0}{Dedge}{Dedge}{Nedge}{DDN}
  \sectorpanel{0}{-3.9}{Nedge}{Nedge}{Nedge}{NNN}
  \sectorpanel{4.7}{-3.9}{Nedge}{Nedge}{Dedge}{NND}

  \node[align=center,font=\small] at (2.0,-4.65) {Dirichlet rhombus: odd $\to$ DDD, even $\to$ DDN};
  \node[align=center,font=\small] at (7.0,-5.08) {Neumann rhombus: even $\to$ NNN, odd $\to$ NND};
\end{tikzpicture}
\caption{The four triangle sectors produced by parity reduction.  The hard sectors are DDN and NND: the two outer sides have the same boundary condition, whereas the short diagonal $M$ carries the opposite one.  Solid red edges denote Dirichlet conditions and dashed blue edges denote Neumann conditions; the line styles remain distinguishable in grayscale.}
\label{fig:four-sectors}
\end{figure}


\begin{definition}[Observation functional]
For nonempty open \(W\subset\T\) and nonzero \(u\), set
\[
 \mathcal O_W(u)=\frac{\norm{u}_{L^2(W)}^2}{\norm{u}_{L^2(\T)}^2}.
\]
\end{definition}


\section{Notation and proof conventions}
\label{sec:notation}

This section fixes the notation used from the statement of the main
results through the appendices.  Symbols are introduced here before
their first technical use.

\paragraph{Domains and symmetry}
The rhombus is denoted by \(\Rh\), its short diagonal by \(\M\), and
one equilateral half by \(\T\).  Reflection across \(\M\) is
\(\sigma\).  Observation sets are denoted by \(W\) on \(\T\) and,
when no confusion can arise, also by \(W\) on \(\Rh\).

\paragraph{Operators and eigenspaces}
For a boundary-condition label
\[
 B\in\{DDD,DDN,NNN,NND\},
\]
\(H_B\) denotes the corresponding self-adjoint Laplacian on \(\T\),
and
\[
E_\lambda^B:=\ker(H_B-\lambda).
\]
In the high-frequency argument, we write \(\lambda=h^{-2}\) and
\[
P_h:=-h^2\Delta-1.
\]
Thus, an exact eigenfunction at energy \(\lambda\) satisfies
\(P_hu_h=0\).  The phrase \emph{exact eigenfunction sequence} always
means a sequence satisfying this equation exactly, not merely up to a
quasimode error.

\paragraph{Unfolded surface and phase space}
The finite flat/conic unfolding is denoted by \(X\), its finite cone
set by \(\mathcal C_X\), and its regular part by
\[X^\circ:=X\setminus\mathcal C_X.\]
The unfolding carries a canonical continuous folding map
\(F:X\to\T\), obtained by sending each triangle copy back to \(\T\) by
its defining reflection isometry; the copy maps agree on their common seams.
For a relatively open observation set \(W\subset\T\), define its full
regular lift by
\[
  \widetilde W:=F^{-1}(W)\cap X^\circ.
\]
Thus, \(\widetilde W\) is open in \(X^\circ\).  Away from the unfolded
seams it is equivalently the union of the finitely many triangle-copy lifts
\(\iota_\alpha(W\cap\T^\circ)\).  The seams and cone points have area
zero, so this convention gives the same \(L^2\)-mass identities as the
copywise definition while making set-theoretic conditions such as the
punctured cone guard literal.
The cotangent projection on the regular part is \(\pi:T^*X^\circ\to X^\circ\).  We use
\[
p(x,\xi)=|\xi|^2-1,\qquad
S^*X^\circ:=\{(x,\xi)\in T^*X^\circ:p(x,\xi)=0\},
\]
and write \(\Phi_t\) for the Hamiltonian flow of \(p\) wherever the
regular flow is defined.  A semiclassical measure generated by an
\(L^2\)-bounded sequence is denoted by \(\mu\).

\paragraph{Microlocal mass}
If \(\mathcal Y\subset S^*X^\circ\) is a fixed microlocal component, choose
\(\chi_{\mathcal Y}\in C_c^\infty(T^*X^\circ)\) equal to one on a slightly
smaller representative of \(\mathcal Y\).  We write
\[
M_{\mathcal Y,h}(u):=
\langle \Op_h(\chi_{\mathcal Y})u,u\rangle_{L^2(X)}.
\]
Statements about ``mass in a directional component'' refer to this
microlocal quantity (or to its limiting measure), not to an
\(L^2\)-norm over a subset of phase space.

\paragraph{Uniformity convention}
Unless explicitly stated otherwise, constants in the high-frequency
argument may depend on the fixed observation set and the fixed finite
propagation construction, but not on \(h\) or on the chosen normalized
eigenfunction.


\section{Main results}
\label{sec:main-results}

We write \(H_{DDN}\) and \(H_{NND}\) for the self-adjoint Laplacians on
\(\T\) with the indicated mixed boundary conditions, with the third condition
imposed on \(\M\). Let \(X\) be the exact mixed-sector unfolding from
\cref{prop:unfolding}; for an open set \(W\subset\T\), its full lift in the
regular part is denoted by
\(\widetilde W\subset X^\circ:=X\setminus\mathcal C_X\).

\begin{definition}[Admissible observation geometry]
\label{def:admissible-observation}
A nonempty open set \(O\subset X^\circ\) belongs to
\(\mathcal O_{\rm adm}(X)\) if both conditions below hold.
\begin{enumerate}[label=\textnormal{(A\arabic*)},leftmargin=2.6em]
\item \emph{Punctured cone guard.} There exists
\(\varepsilon_{\mathcal C}>0\) such that
\[
 U_{\mathcal C}^\times
 :=\{x\in X^\circ:0<d_X(x,\mathcal C_X)<\varepsilon_{\mathcal C}\}
 \subset O.
\]
\item \emph{Periodic-cylinder interception.} For every completely periodic
direction \(\theta\) and every maximal cylinder \(C_{\theta,\alpha}\) in that
direction, every regular closed orbit contained in \(C_{\theta,\alpha}\)
meets \(O\).
\end{enumerate}
For the triangle, set
\[
 \mathcal O_{\rm adm}(\T)
 :=\{W\subset\T:\ W\text{ is nonempty and open, and }
                 \widetilde W\in\mathcal O_{\rm adm}(X)\}.
\]
\end{definition}

\begin{remark}[Admissibility versus geometric control]
\label{rem:admissibility-not-gcc}
The admissibility condition is not a geometric control condition in disguise.
It imposes no hitting requirement, and in particular no uniform hitting time,
on trajectories in uniquely ergodic directions. Geometric interception is
required only in completely periodic directions, precisely because these are
the directions in which invariant probability measures may remain supported
on proper regular subsets of the unfolded surface. Thus \textnormal{(A2)} is
an invariant-measure exclusion condition rather than a trajectorywise control
condition. Condition \textnormal{(A1)} is logically separate: it removes the
singular network from the regular propagation problem. This criterion is
specific to the stationary eigenfunction problem considered here and is not
intended as a geometric control condition for time-dependent wave propagation.
\end{remark}

\begin{theorem}[Mixed-sector uniform observability]
\label{thm:mixed-uniform}
Let \(B\in\{DDN,NND\}\) and \(W\in\mathcal O_{\rm adm}(\T)\). Then, there
exists \(c_W>0\) such that, for every eigenvalue \(\lambda\) of \(H_B\) and
every \(u\in E_\lambda^B\),
\[
 \norm{u}_{L^2(W)}^2\ge c_W\norm{u}_{L^2(\T)}^2.
\]
The constant is independent of the eigenvalue, the eigenspace dimension, and
the choice of eigenfunction.
\end{theorem}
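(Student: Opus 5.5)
We argue by contradiction and compactness. Suppose the conclusion fails for some \(B\in\{DDN,NND\}\) and \(W\in\mathcal O_{\rm adm}(\T)\). Then there are eigenvalues \(\lambda_k\) and normalized \(u_k\in E^B_{\lambda_k}\) with \(\mathcal O_W(u_k)\to0\).

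The low-frequency part is handled first. For \(\lambda_k\) in a bounded set, the eigenspaces are finite-dimensional and there are finitely many of them. The unit sphere of each is compact, and unique continuation shows that no nonzero eigenfunction vanishes on the open set \(W\). Hence \(\mathcal O_W\) has a positive minimum on each such sphere. We may therefore assume \(\lambda_k\to\infty\), and we set \(h_k=\lambda_k^{-1/2}\).

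Next we lift. By the exact unfolding \cref{prop:unfolding}, each \(u_k\) lifts to an exact eigenfunction \(\tilde u_k\) on the branched genus-two surface \(X\). The lift satisfies \(P_{h_k}\tilde u_k=0\), with \(\|\tilde u_k\|_{L^2(X)}^2=2\) (or the appropriate degree factor) and \(\|\tilde u_k\|^2_{L^2(\widetilde W)}\to0\). After passing to a subsequence, we obtain a semiclassical defect measure \(\mu\) on \(T^*X^\circ\).

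Standard arguments give the following properties of \(\mu\):
\begin{enumerate}[label=(\roman*)]
\item \(\mu\) is supported on \(S^*X^\circ\);
\item \(\mu\) is invariant under the regular geodesic (directional) flow \(\Phi_t\) away from the cone set, by propagation of singularities for \(P_h\) with smooth commutators supported in \(X^\circ\);
\item \(\mu(\pi^{-1}\widetilde W)=0\).
\end{enumerate}
By (A1), the punctured neighborhood \(U_{\mathcal C}^\times\) lies in \(\widetilde W\). Therefore \(\mu\) puts no mass over \(U^\times_{\mathcal C}\). Since the cone set has measure zero, no limiting mass of \(|\tilde u_k|^2\) can concentrate on \(\mathcal C_X\) either. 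Here we use that \(\tilde u_k\) is bounded in \(H^1_h\), so no \(L^2\) mass accumulates on points: a cutoff \(\chi_\delta\) near \(\mathcal C_X\) has \(\|\chi_\delta \tilde u_k\|\le\|\tilde u_k\|_{L^2(U^\times_{\mathcal C}\cap\{d<\delta\})}\), up to the measure-zero puncture.

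It follows that \(\mu\) is a probability measure (up to the normalization constant), with total mass \(\mu(S^*X^\circ)=\lim\|\tilde u_k\|^2>0\). Moreover, \(\mu\) is supported over \(X\setminus\mathcal C_X\) at distance at least \(\varepsilon_{\mathcal C}\) from the cones. Trajectories of \(\Phi_t\) starting in this region either hit \(U^\times_{\mathcal C}\), where \(\mu\) vanishes, or avoid the cones forever. Consequently \(\mu\) is carried by non-singular, complete, flow-invariant directional orbits.

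We then disintegrate \(\mu\) over the direction \(\theta\in S^1\). Since the flow preserves the direction on a translation surface, \(\mu=\int\mu_\theta\,d\nu(\theta)\), with each \(\mu_\theta\) invariant under the linear flow in direction \(\theta\). The surface \(X\) is arithmetic, hence Veech, so every direction is either uniquely ergodic or completely periodic.
\begin{itemize}
\item If \(\theta\) is uniquely ergodic, then \(\mu_\theta\) is a multiple of Lebesgue measure on \(X\). Lebesgue measure charges the nonempty open set \(\widetilde W\), which contradicts (iii) unless \(\mu_\theta=0\).
\item If \(\theta\) is completely periodic, then \(X\) decomposes into maximal cylinders plus finitely many saddle connections. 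No mass lies on the saddle connections, since every such orbit enters \(U^\times_{\mathcal C}\). Inside a cylinder \(C_{\theta,\alpha}\), the invariant measure disintegrates into measures on the regular closed orbits, each of which is (a multiple of) arc length. By (A2), every such orbit meets the open set \(\widetilde W\), so it carries positive arclength measure there. Hence these components vanish as well.
\end{itemize}
Thus \(\mu=0\), which contradicts the positive total mass and proves the theorem.

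The main obstacle is the step excluding mass at the cone points. The issue is not only that \(\mu\) lives on \(X^\circ\); we must also ensure that the total mass does not escape into \(\mathcal C_X\), and that invariance holds on orbits passing arbitrarily near the cones. My first attempt will use (A1) directly on the \(L^2\) level. Mass within distance \(\varepsilon_{\mathcal C}\) of the cones is controlled by \(\|\tilde u_k\|_{L^2(U^\times_{\mathcal C})}\to0\), since the cone points themselves have measure zero. This yields tightness of \(|\tilde u_k|^2dx\) away from \(\mathcal C_X\), and propagation is then needed only on compact subsets of \(X^\circ\). A secondary technical point is the measurability of the disintegration in \(\theta\), for which we will use the countability of completely periodic directions.
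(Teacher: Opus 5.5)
Your proposal is correct and follows the paper's proof essentially step for step. The steps are: finite-dimensional compactness plus unique continuation for bounded eigenvalues (the paper's \cref{prop:globalization}); exact unfolding to \(X\); a dark semiclassical measure that loses no mass at \(\mathcal C_X\) because of (A1); propagation to discard trajectories reaching \(U_{\mathcal C}^\times\); directional disintegration; and the Veech dichotomy, with (A2) and the Haar disintegration inside each maximal cylinder excluding periodic mass. One minor correction: the no-mass-at-the-cones step rests entirely on (A1), through your cutoff estimate, and not on \(H^1_h\)-boundedness, which by itself would not rule out concentration at a point on scale \(h\).
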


\begin{corollary}[Concrete small-hole observations]
\label{cor:small-hole-admissible}
There exists a dense open set of regular interior points \(x_0\in\T\) with the
following property: for all sufficiently small \(r>0\) such that
\(\overline{B_\T(x_0,r)}\Subset\T\), the observation region
\[
 W_r:=\T\setminus\overline{B_\T(x_0,r)}
\]
belongs to \(\mathcal O_{\rm adm}(\T)\).  Consequently, for each
\(B\in\{DDN,NND\}\) there is \(c_{W_r}>0\) such that
\[
 \norm{u}_{L^2(W_r)}^2\ge c_{W_r}\norm{u}_{L^2(\T)}^2
 \qquad\text{for every }u\in E_\lambda^B\text{ and every }\lambda.
\]
The same conclusion holds for a finite family of sufficiently small mutually separated
interior holes whose distinct unfolded lifts remain separated.
\end{corollary}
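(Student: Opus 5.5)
The corollary follows by verifying that \(\widetilde W_r\in\mathcal O_{\rm adm}(X)\) and then applying \cref{thm:mixed-uniform}. The lift of the hole, \(F^{-1}(\overline{B_\T(x_0,r)})\), is a finite union of closed discs \(D_1,\dots,D_m\) in \(X^\circ\). Here \(m\) is the degree of \(F\), and the centres \(\tilde x_1,\dots,\tilde x_m\) are the preimages of \(x_0\). We then have \(\widetilde W_r = X^\circ\setminus\bigcup_j D_j\) up to seams, and the two admissibility conditions are checked against this description.

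\textbf{Condition (A1).} Since \(x_0\) is interior and regular, \(\delta:=\min_j d_X(\tilde x_j,\mathcal C_X)>0\). For \(r<\delta/2\) the discs stay at distance at least \(\delta/2\) from \(\mathcal C_X\). Taking \(\varepsilon_{\mathcal C}=\delta/2\), the punctured neighbourhood \(U_{\mathcal C}^\times\) is contained in \(\widetilde W_r\). This holds for every interior \(x_0\).

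\textbf{Condition (A2).} This is the substantive step, and the density/openness requirement on \(x_0\) is used here. A regular closed orbit inside a maximal cylinder \(C_{\theta,\alpha}\) fails to meet \(\widetilde W_r\) only if it lies entirely in \(\bigcup_j D_j\). A closed geodesic of length \(\ell\) in a disc of radius \(r\) forces \(\ell\le 2r\). On the arithmetic genus-two surface, the cylinder circumferences are bounded below by the systole \(\ell_{\min}>0\), since closed regular orbits are never shorter than the shortest saddle connection or cylinder core. Taking \(r<\ell_{\min}/4\) would give a contradiction, provided the discs are pairwise disjoint; if two lifted discs overlapped, an orbit could cross through both.

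So the genericity condition I would impose is that the points \(\tilde x_j\) are pairwise at distance more than \(2r\) and at positive distance from \(\mathcal C_X\) and from the seams. The set of such \(x_0\) is open and dense in \(\T\): the exceptional points lie on finitely many segments, namely the fixed loci of the reflection isometries relating the copies, together with the seams. After removing them, \(r\) only needs to be below \(\tfrac14\min(\ell_{\min},\min_{i\ne j}d(\tilde x_i,\tilde x_j),\delta)\).

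Because the disc has convex closure and the orbit is a single straight closed line, the disjointness of the discs makes the length argument clean. Note that a straight segment of length at most \(2r\) cannot close up in a flat disc at all. So (A2) in fact holds trivially once the orbit is confined to one flat disc, and the real content is confinement to a single disc. \(\widetilde W_r\) is open and nonempty, so \(W_r\in\mathcal O_{\rm adm}(\T)\), and \cref{thm:mixed-uniform} gives the estimate.

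\textbf{Finite families of holes.} The argument is identical. Each of the finitely many lifted discs lies in a flat chart, and by the separation hypothesis they are pairwise disjoint. A closed orbit lying in their union lies in one of them by connectedness, which is impossible as above.

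\textbf{Expected difficulty.} The main point to get right is the genericity set. I need the lifts of \(x_0\) to avoid the reflection-fixed segments, so that distinct lifts are genuinely separated, and then I need the uniform-in-\(\theta\) conclusion. The latter is automatic from the confinement observation, with no cylinder-by-cylinder analysis required.
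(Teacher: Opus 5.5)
Your argument is correct and is essentially the paper's. The paper also chooses $x_0$ off a closed nowhere-dense set so that the finitely many lifts are distinct and away from $\mathcal C_X$, and takes $r$ small so the lifted holes lie in pairwise disjoint embedded Euclidean disks. It then gets (A1) from the positive distance to the cone set, and (A2) from connectedness of a closed orbit together with the impossibility of a closed straight geodesic in a flat disk (Proposition~\ref{prop:admissible-class} and Corollary~\ref{cor:triangle-admissible-subclass}). So your systole estimate is indeed superfluous.

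One small precision: with the paper's convention $\widetilde W=F^{-1}(W)\cap X^\circ$, the identity $\widetilde W_r=X^\circ\setminus\bigcup_j D_j$ is exact rather than ``up to seams''. Exactness is what your (A1) check actually uses, since every punctured neighbourhood $U_{\mathcal C}^\times$ contains seam points.
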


\begin{theorem}[Reflection-overlap observability on the rhombus]
\label{thm:reflection-overlap}
Let \(W\subset\Rh\) be nonempty and open and set
\[
 V:=\operatorname{int}(W\cap\sigma(W)).
\]
Assume \(V\neq\varnothing\) and, after identifying either half of the rhombus
with \(\T\), that \(V\cap\T\in\mathcal O_{\rm adm}(\T)\). Then, there exists
\(c_W>0\) such that every Dirichlet rhombus eigenfunction satisfies
\[
 \norm{u}_{L^2(W)}^2\ge c_W\norm{u}_{L^2(\Rh)}^2.
\]
The analogous statement holds for the Neumann rhombus.
\end{theorem}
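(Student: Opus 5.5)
The plan is to reduce the rhombus estimate to sector estimates through \cref{prop:parity-decomposition} and then recombine the sectors. We treat the Dirichlet case; the Neumann case is identical with NNN/NND in place of DDD/DDN. Let \(u\) be a Dirichlet rhombus eigenfunction with eigenvalue \(\lambda\). Split \(u=u_++u_-\) into \(\sigma\)-even and \(\sigma\)-odd parts. Both parts lie in \(E^{\Rh,D}_\lambda\), and they are orthogonal in \(L^2(\Rh)\), so \(\|u\|^2=\|u_+\|^2+\|u_-\|^2\). Since \(W\supset V\), it suffices to bound \(\|u\|_{L^2(V)}^2\) from below. The set \(V\) is \(\sigma\)-invariant, so the restriction of \(u_+\bar u_-\) to \(V\) is \(\sigma\)-odd and integrates to zero over \(V\). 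Hence
\[
\|u\|_{L^2(V)}^2=\|u_+\|_{L^2(V)}^2+\|u_-\|_{L^2(V)}^2 ,
\]
and the cross term disappears without any fixed-frequency angle estimate. This cancellation is the point of using the reflection overlap \(W\cap\sigma(W)\) rather than \(W\) itself.

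Next, apply the sector estimates on the half \(\T\). By \(\sigma\)-symmetry of \(|u_\pm|\) and of \(V\), we have \(\|u_\pm\|_{L^2(V)}^2=2\|u_\pm\|_{L^2(V\cap\T)}^2\). By \cref{prop:parity-decomposition}, \(\sqrt2\,u_+|_\T\in E^{DDN}_\lambda\) and \(\sqrt2\,u_-|_\T\in E^{DDD}_\lambda\). For the even part, \cref{thm:mixed-uniform} applies with the admissible set \(V\cap\T\in\mathcal O_{\rm adm}(\T)\) and gives \(\|u_+\|^2_{L^2(V\cap\T)}\ge c\|u_+\|^2_{L^2(\T)}\).

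The odd part is the pure DDD sector, and the main obstacle is to supply a uniform estimate there for the same set \(V\cap\T\). One option is to cite Alphonse--Lafontaine \cite{AlphonseLafontaine2025}: their Schr\"odinger observability for any nonempty open set implies stationary observability of eigenfunctions, since \(e^{-it\Delta}\) acts on an eigenfunction only by a phase. If one wants to avoid relying on time-dependent results, the alternative is to rerun the argument of \cref{thm:mixed-uniform} on the scalar-character (twisted) torus unfolding of DDD. A flat torus has no cone points, so (A1) is vacuous. Its completely periodic cylinders are images of cylinders on \(X\) under the common folding to \(\T\), so interception in the sense of (A2) transfers from the lift on \(X\) to the torus lift. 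I would first check this transfer carefully: regular closed orbits on the torus fold to billiard trajectories in \(\T\), and each such trajectory lifts to a closed orbit on \(X\), possibly after doubling. With this in hand, the same Veech-dichotomy argument applies, and on a torus it is only the classical rational/irrational dichotomy.

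Combining the two sector bounds with \(c_W:=\min(c_{DDN},c_{DDD})\) gives
\[
\|u\|_{L^2(W)}^2\ge\|u\|_{L^2(V)}^2\ge c_W\bigl(\|u_+\|^2+\|u_-\|^2\bigr)=c_W\|u\|^2_{L^2(\Rh)} .
\]
This constant is uniform in \(\lambda\), in the multiplicity, and in the choice of \(u\), because each sector constant is.
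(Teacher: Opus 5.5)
Your proposal is correct and follows the paper's proof essentially line by line. The shared steps are: the parity splitting via \cref{prop:parity-decomposition}; the vanishing of the cross term on the $\sigma$-invariant set $V$; \cref{thm:mixed-uniform} for the mixed sector and Alphonse--Lafontaine \cite{AlphonseLafontaine2025} for the pure sector on $V\cap\T$; the halving identities; and $c=\min\{c_+,c_-\}$. The optional torus rerun for DDD is not needed, but if you pursue it, torus closed orbits through the images of the two mixed vertices lift to saddle connections on $X$, not to regular closed orbits, so their interception comes from (A1) rather than (A2).
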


\begin{proposition}[Strict fixed-frequency cross-parity angle]
\label{prop:fixed-angle}
Let \(W\subset\T\) be nonempty and open.  Fix one of the two parity pairs
\[
 (B_+,B_-)\in\{(DDN,DDD),(NNN,NND)\},
\]
and let \(\lambda\) be a common eigenvalue of \(H_{B_+}\) and \(H_{B_-}\).
Define
\[
 \rho_W^{B_+,B_-}(\lambda)
 =\sup_{\substack{0\ne u_+\in E_\lambda^{B_+}\\
                  0\ne u_-\in E_\lambda^{B_-}}}
 \frac{|\langle u_+,u_-\rangle_{L^2(W)}|}
 {\norm{u_+}_{L^2(W)}\norm{u_-}_{L^2(W)}} .
\]
Then,
\[
 \rho_W^{B_+,B_-}(\lambda)<1.
\]
\end{proposition}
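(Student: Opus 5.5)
The proof is finite-dimensional, with unique continuation as the only analytic input. The eigenspaces \(E_\lambda^{B_\pm}\) have finite dimension, since the triangle Laplacians have compact resolvent. By unique continuation for the Helmholtz equation, no nonzero eigenfunction vanishes on the nonempty open set \(W\). Hence \(\|\cdot\|_{L^2(W)}\) is a genuine norm on each \(E_\lambda^{B_\pm}\), and the quotient defining \(\rho_W^{B_+,B_-}(\lambda)\) is well defined and homogeneous of degree zero in \(u_+\) and in \(u_-\). The supremum may therefore be taken over the product of the two unit spheres for the \(L^2(W)\)-norms. Each sphere is compact because the space is finite-dimensional, and the quotient is continuous, so the supremum is attained. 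If \(\rho=1\), equality in Cauchy--Schwarz on \(L^2(W)\) at a maximizing pair gives \(u_+=c\,u_-\) almost everywhere on \(W\) for some constant \(c\) with \(|c|=1\).

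The key step is to rule out \(u_+=c\,u_-\) on \(W\). Set \(w:=u_+-c\,u_-\). Both functions satisfy \(-\Delta w=\lambda w\) in \(\T^\circ\), and \(w\) vanishes on the open set \(W\cap\T^\circ\), which is nonempty because \(W\) is open and nonempty. The interior is connected, so unique continuation (or real analyticity of Helmholtz solutions) gives \(w\equiv 0\) on \(\T\), that is, \(u_+=c\,u_-\) on all of \(\T\). The two functions satisfy opposite boundary conditions on \(\M\): Neumann versus Dirichlet for the pair \((DDN,DDD)\), and the reverse for \((NNN,NND)\). Hence the common function \(u_+=c\,u_-\) satisfies both \(u|_\M=0\) and \(\partial_\nu u|_\M=0\) on the relative interior of \(\M\).

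To reach a contradiction, reflect as in \cref{prop:parity-decomposition}. Since \(u_+\) is \(\sigma\)-even and \(u_-\) is \(\sigma\)-odd, the function \(u_+=c\,u_-\) on \(\T\) extends across \(\M\) both evenly and oddly. These two extensions agree on the other half only if the extension vanishes there. One can also argue directly. A Helmholtz solution with zero Cauchy data on an open piece of \(\M\) can be extended by zero across \(\M\) to a weak solution in a neighborhood of a point of \(\M\). By elliptic regularity this extension is a Helmholtz solution near that point, and it vanishes on an open set. Unique continuation then forces \(u_+\equiv 0\), contradicting \(u_+\ne0\). Therefore \(\rho<1\).

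The main obstacle is justifying the zero-Cauchy-data unique continuation up to \(\M\): the traces must be understood weakly near \(\M\), away from the vertices. I would handle this by working in a small half-disc neighbourhood of an interior point of \(\M\), where eigenfunctions are smooth up to the flat boundary by local elliptic regularity for Dirichlet and Neumann problems. There the zero extension is checked to be a weak \(H^1\) solution by integrating by parts, since both traces vanish. The rest of the argument is soft compactness.
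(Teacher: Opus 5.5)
Your proposal is correct and follows essentially the same route as the paper. You show that $\|\cdot\|_{L^2(W)}$ is a norm on each eigenspace by unique continuation and use compact unit spheres to attain the supremum. You then apply Cauchy--Schwarz equality, use unique continuation to get $u_+=c\,u_-$ on all of $T$, and conclude by Cauchy uniqueness from the zero Cauchy data on $M$, which you justify more explicitly than the paper via zero extension across $M$ and elliptic regularity. One harmless slip: in both pairs it is $B_+$ that is Neumann on $M$ and $B_-$ that is Dirichlet, so it is not ``the reverse'' for $(NNN,NND)$; only the oppositeness of the two conditions is used.
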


\begin{remark}[Role of the cone condition]
\label{rem:cone-guard-status}
The cone-neighborhood condition in \textnormal{(A1)} ensures that trajectories
reaching a cone point first enter the observation region. Consequently, the
high-frequency argument can be carried out entirely by propagation on the
regular part of the unfolded surface, without requiring propagation through
the conic singularities. The extension to arbitrary nonempty open observation
sets, where this mechanism is no longer available, is considered separately
in \cref{sec:endpoint}. In particular, the proof of
\cref{thm:mixed-uniform} relies only on regular propagation together with the
periodic-cylinder and unique-ergodicity arguments developed below.
\end{remark}

\section{Exact flat-surface realization of the mixed sectors}
\label{sec:flat-surface}
Let
\[
 e_1=(1,0),\qquad
 e_2=\left(\frac12,\frac{\sqrt3}{2}\right),\qquad
 \Lambda_\triangle=\mathbb Z e_1\oplus\mathbb Z e_2.
\]
The distinction between the pure and mixed parity sectors is already visible
at the level of the reflection signs. We begin by making this distinction
precise.

\begin{proposition}[Mixed Coxeter monodromy and minimal-degree scalar resolution]
\label{prop:mixed-monodromy}
Let \(s_1,s_2,s_3\) denote the reflections in the sides of an equilateral
triangle. They satisfy the Coxeter relations
\[
    s_i^2=1,
    \qquad
    (s_i s_j)^3=1
    \quad (i\ne j).
\]
The reflection signs associated with the DDN and NND boundary conditions are,
respectively,
\[
    (-1,-1,+1)
    \qquad\text{and}\qquad
    (+1,+1,-1).
\]
Neither assignment extends to a scalar character of the equilateral
reflection group. Moreover, a positively oriented loop around each mixed
D/N vertex has nontrivial \(\mathbb Z_2\)-monodromy. This monodromy is
trivialized by a two-sheeted translation cover of the triangular torus,
branched precisely over the two mixed vertices. Since the local monodromy is
the nontrivial element of \(\mathbb Z_2\), any scalar cover that trivializes
it has degree at least two; hence this resolution has minimal degree.
The resulting compact translation surface \(X\) has exactly two conic
singularities, each of angle \(4\pi\), and has genus two.
\end{proposition}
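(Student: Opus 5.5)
The plan is to treat the three assertions separately: the non-extension to a character, the local monodromy, and the construction of \(X\) together with its invariants. All of them reduce to bookkeeping with the affine reflection group \(G=\langle s_1,s_2,s_3\rangle\) of type \(\tilde A_2\), generated by the reflections in the sides of \(\T\). Index the sides so that \(s_3\) is the reflection in \(\M\) and \(s_1,s_2\) are the reflections in the outer sides. The sign of \(s_i\) is \(+1\) for a Neumann side and \(-1\) for a Dirichlet side; this matches the requirement that \(u\circ s_i=\varepsilon_i u\) for the reflected extension.

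\textbf{Non-extension to a character.} Suppose \(\chi:G\to\{\pm1\}\) is a homomorphism with \(\chi(s_i)=\varepsilon_i\). The relation \((s_is_j)^3=1\) forces \((\varepsilon_i\varepsilon_j)^3=\varepsilon_i\varepsilon_j=1\), so \(\varepsilon_i=\varepsilon_j\) for all \(i\ne j\). Only the constant assignments DDD and NNN survive; both \((-1,-1,+1)\) and \((+1,+1,-1)\) fail.

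\textbf{Local monodromy at a vertex.} At the vertex where sides \(i\) and \(j\) meet, the six triangles around it are obtained by alternately applying \(s_i\) and \(s_j\). Following the reflected extension once around the vertex amounts to applying the rotation \((s_is_j)^3=\mathrm{id}\). The extension therefore returns multiplied by \((\varepsilon_i\varepsilon_j)^3=\varepsilon_i\varepsilon_j\). This factor is \(-1\) exactly at the two endpoints \(Q,R\) of \(\M\), where a D side meets an N side. It is \(+1\) at the apex \(P\), where the two outer sides carry equal conditions. This is the asserted nontrivial \(\mathbb Z_2\)-monodromy around each mixed vertex.

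\textbf{Construction of the cover.} First build the triangular torus. The translation subgroup \(L\subset G\) has point group \(D_3\) of order \(6\), so \(\mathbb R^2/L\) is a flat torus tiled by six copies of \(\T\). It has \(F=6\) and \(E=9\), and since \(\chi=0\) we get \(V=3\). Each vertex class carries total angle \(6\cdot\pi/3=2\pi\), so all three are regular points. The three classes are the images of \(P\), \(Q\), \(R\). Next, the signed reflected extension of an eigenfunction defines a section of a flat \(\mathbb Z_2\)-bundle over \(\mathbb T^2\setminus\{Q,R\}\), with monodromy homomorphism \(\eta:\pi_1(\mathbb T^2\setminus\{Q,R\})\to\mathbb Z_2\). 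Here \(\eta\) is nontrivial on the small loops around \(Q\) and \(R\); its values on the torus generators are whatever the sign bookkeeping along lattice translations dictates. Consistency is automatic, because the product of the two puncture loops is a commutator and \((-1)(-1)=1\). Let \(X\to\mathbb T^2\) be the two-sheeted cover attached to \(\ker\eta\), completed over \(Q\) and \(R\). It is branched precisely there, and its translation structure is pulled back from the torus. On \(X\) the bundle pulls back to a trivial one, so the extension becomes a genuine scalar function.

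\textbf{Invariants and minimality.} Each branch point has a single preimage of cone angle \(2\cdot2\pi=4\pi\); the preimages of \(P\) are regular. Riemann--Hurwitz gives \(2g-2=2(0-2)+2=2\), so \(g=2\); this agrees with Gauss--Bonnet for translation surfaces, \(\sum(k_i)=2g-2\) with \(k_1=k_2=1\). For minimality, a degree-one cover is the torus itself, and there the local monodromy \(-1\) at \(Q\) obstructs a single-valued scalar. More generally, a scalar cover must make the pullback of every small loop around \(Q\) trivial under \(\eta\). This forces ramification of even order at \(Q\), hence degree at least \(2\).

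\textbf{Expected obstacle.} The delicate step is making \(\eta\) precise on the torus generators, i.e.\ checking that the sign bookkeeping along translations in \(L\) is a well-defined homomorphism on the punctured torus rather than something ill-defined. I would handle this by computing \(\eta\) explicitly on the two basic lattice translations, each written as a word in \(s_1,s_2,s_3\) and assigned the product of the corresponding signs. I would then verify that every relation in \(\pi_1\) of the punctured torus is respected.
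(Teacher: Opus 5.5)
Your proposal is correct and follows essentially the paper's route: the contradiction \((\varepsilon_i\varepsilon_j)^3=-1\) at a mixed vertex, the full-turn monodromy \((s_is_j)^3\), the double cover of \(\mathbb T^2_\triangle\) branched at the two mixed-vertex images, minimality from the nontrivial local monodromy, and Riemann--Hurwitz; it is also a little more careful than the paper in checking that the two puncture monodromies multiply to \(+1\), and the obstacle you flag is harmless, since \(\eta\) is just the product of edge-crossing signs on the six-triangle torus and a homotopy in \(\mathbb T^2_\triangle\setminus\{Q,R\}\) changes that product only by \(\varepsilon_i^2=1\) or, when sweeping past \(P\), by \((\varepsilon_1\varepsilon_2)^3=+1\). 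One slip: your Riemann--Hurwitz line should read \(2g-2=2(2\cdot 1-2)+2=2\), because the base is a torus and \(2(0-2)+2=-2\); your Gauss--Bonnet check already gives the correct \(g=2\).
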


\begin{proof}
Suppose first that the reflection signs extend to a scalar character
\(\chi\), and write \(\varepsilon_i=\chi(s_i)\in\{\pm1\}\).
Applying \(\chi\) to the Coxeter relation
\((s_i s_j)^3=1\) gives
\[
    (\varepsilon_i\varepsilon_j)^3=1.
\]
At a mixed D/N vertex, however, the two adjacent sides carry opposite
reflection signs, so that
\[
    \varepsilon_i\varepsilon_j=-1,
    \qquad
    (\varepsilon_i\varepsilon_j)^3=-1,
\]
a contradiction. Hence, neither the DDN nor the NND sign assignment extends
to a scalar character of the equilateral reflection group.
The same obstruction has a local geometric interpretation. At a mixed
vertex of angle \(\pi/3\), the composition of the two adjacent reflections
is a rotation through \(2\pi/3\). Three such compositions therefore make
one full turn around the vertex, while the reflected solution acquires the
factor
\[
    (\varepsilon_i\varepsilon_j)^3=-1.
\]
Thus, a positively oriented loop around each mixed vertex has nontrivial
\(\mathbb Z_2\)-monodromy. Since this monodromy is detected by a loop around
a contractible puncture, it cannot be removed merely by replacing the
translation lattice by a finite-index sublattice.

The monodromy is trivialized by passing to the two-sheeted cover in which
continuation around either mixed vertex exchanges the two sheets. Since the
local monodromy is the nontrivial element of \(\mathbb Z_2\), any scalar
cover trivializing it has degree at least two; hence this construction has
minimal possible degree. The cover is branched precisely over the two mixed
vertices. Each is a simple branch point over a regular point of the
triangular torus, so the local degree-two pullback changes the total angle
from \(2\pi\) to \(4\pi\).

Let
\[
    \Pi:X\longrightarrow\mathbb T^2_\triangle
\]
denote the resulting degree-two branched cover. Since the base has genus one
and there are exactly two simple branch points, the Riemann--Hurwitz formula
gives
\[
    2g(X)-2
    =
    2\bigl(2g(\mathbb T^2_\triangle)-2\bigr)+2
    =
    2,
\]
and therefore
\[
    g(X)=2.
\]

Finally, the DDN and NND reflection signs differ by multiplication by
\(-1\) on each generating reflection. Every orientation-preserving word has
even reflection length, so this overall sign cancels on the
orientation-preserving subgroup. Consequently, the two mixed sectors
determine the same underlying orientation-preserving branched translation
surface.
\end{proof}

\begin{figure}[H]
\centering
\resizebox{0.98\textwidth}{!}{%
\begin{tikzpicture}[
    font=\scriptsize,
    >=Latex,
    line cap=round,
    line join=round
]


\node[font=\small\bfseries] at (-5.35,1.75)
  {$\mathbb T^2_\triangle$ with branch cut};

\draw[line width=.8pt]
  (-6.65,-.10) rectangle (-4.05,1.05);

\draw[<->,black!50]
  (-6.65,.47)--(-4.05,.47);

\draw[<->,black!50]
  (-5.35,-.10)--(-5.35,1.05);

\fill (-6.00,.47) circle (1.8pt)
  node[below=3pt] {$q_1$};

\fill (-4.70,.47) circle (1.8pt)
  node[below=3pt] {$q_2$};

\draw[line width=1.45pt]
  (-5.94,.47)--(-4.76,.47);

\node[above=4pt] at (-5.35,.47) {slit};

\node[black!65] at (-5.35,-.43)
  {opposite sides identified};


\draw[->,line width=.8pt]
  (-3.70,.47)--(-2.70,.47);


\node[font=\small\bfseries] at (0,1.75)
  {two slit sheets};


\draw[line width=.8pt]
  (-1.75,.65) rectangle (1.75,1.25);

\node[left=5pt] at (-1.75,.95)
  {sheet $+$};

\fill (-.75,.95) circle (1.7pt);
\fill ( .75,.95) circle (1.7pt);

\draw[line width=1.45pt]
  (-.69,.95)--(.69,.95);


\draw[line width=.8pt]
  (-1.75,-.90) rectangle (1.75,-.30);

\node[left=5pt] at (-1.75,-.60)
  {sheet $-$};

\fill (-.75,-.60) circle (1.7pt);
\fill ( .75,-.60) circle (1.7pt);

\draw[line width=1.45pt]
  (-.69,-.60)--(.69,-.60);

%

\draw[->,line width=.75pt]
  (-1.05,.88)
    .. controls (-1.45,.55) and (-1.45,-.20) ..
  (-1.05,-.53);

\draw[->,line width=.75pt]
  (1.05,-.53)
    .. controls (1.45,-.20) and (1.45,.55) ..
  (1.05,.88);

\node[align=center,fill=white,inner sep=2pt]
  at (0,.16)
  {cross-glue\\slit banks};


\draw[->,line width=.8pt]
  (2.10,.47)--(3.15,.47);


\node[font=\small\bfseries] at (5.05,1.75)
  {branched cover $X$};

\draw[line width=.85pt,rounded corners=6pt]
  (3.65,.55)
    .. controls (3.90,1.20) and (4.45,1.20) .. (4.75,.55)
    .. controls (5.05,1.20) and (5.60,1.20) .. (5.90,.55)
    .. controls (6.20,-.05) and (5.60,-.65) .. (4.75,-.20)
    .. controls (3.90,-.65) and (3.35,-.05) .. (3.65,.55)
    -- cycle;

\fill (3.98,.20) circle (1.8pt);
\fill (5.58,.20) circle (1.8pt);

\node[above=10pt] at (3.98,.20) {$4\pi$};
\node[above=10pt] at (5.58,.20) {$4\pi$};

\node at (4.78,.25) {$g=2$};

\node[align=center,black!65]
  at (4.78,-.88)
  {$\deg\Pi=2$, \quad branch set $\{q_1,q_2\}$};

\end{tikzpicture}
}

\caption{
Topological construction of the scalar mixed-sector unfolding.
The nontrivial $\mathbb Z_2$ monodromy is resolved by taking two copies of the
triangular torus slit between the mixed-vertex images $q_1,q_2$ and
cross-gluing the banks. The resulting degree-two cover is branched exactly at
$q_1,q_2$; each branch point becomes a $4\pi$ cone and
Riemann--Hurwitz gives genus two. The diagram records the covering and gluing
data, not a Euclidean fundamental polygon for $X$.
}
\label{fig:branched-cover-gluing}
\end{figure}

\begin{proposition}[Exact mixed-boundary unfolding]
\label{prop:unfolding}
Let \(u_h\) be an exact eigenfunction in either mixed sector, satisfying
\[
    (-h^2\Delta-1)u_h=0
    \qquad \text{on } \T .
\]
Then, the reflection construction of \cref{prop:mixed-monodromy} lifts \(u_h\)
to a function \(\widetilde u_h\) in the operator domain of the Friedrichs
Laplacian on \(X\), satisfying
\[
    (-h^2\Delta_X-1)\widetilde u_h=0
\]
in the global weak sense on \(X\). On each regular triangle copy,
\(\widetilde u_h\) is the pullback of \(u_h\) by an isometry, up to sign.
If \(N_{\rm sh}\) denotes the fixed number of triangle copies in the
unfolding, then
\[
    \|\widetilde u_h\|_{L^2(X)}^2
    =
    N_{\rm sh}\|u_h\|_{L^2(\T)}^2,
    \qquad
    \|\widetilde u_h\|_{L^2(\widetilde W)}^2
    =
    N_{\rm sh}\|u_h\|_{L^2(W)}^2.
\]
\end{proposition}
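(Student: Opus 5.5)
I will build \(\widetilde u_h\) copy by copy and then verify that it is a weak eigenfunction across the seams and near the cone points.

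\emph{Step 1 (building the lift).} Realize \(X\) as a finite union of \(N_{\rm sh}\) closed equilateral triangles \(\T_\alpha\). Each copy comes with an isometry \(g_\alpha:\T_\alpha\to\T\), namely the restriction of a word in the side reflections \(s_1,s_2,s_3\); its sheet label records the \(\mathbb Z_2\) monodromy of \cref{prop:mixed-monodromy}. On \(\T_\alpha\) I set
\[
\widetilde u_h:=\epsilon_\alpha\, u_h\circ g_\alpha ,
\]
where \(\epsilon_\alpha\in\{\pm1\}\) is the product of the reflection signs \((-1,-1,+1)\) or \((+1,+1,-1)\) along the chosen word, multiplied by the sheet sign.

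The construction of \(X\) in \cref{prop:mixed-monodromy} is exactly what makes \(\epsilon_\alpha\) well defined. Around a regular point of \(X\) (lattice vertices and unmixed vertices) every closed loop has trivial sign. Around a mixed vertex the factor \(-1\) is absorbed by the sheet exchange. Hence \(\widetilde u_h\) is a single-valued function on \(X\).

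\emph{Step 2 (matching across a seam).} Take a common open edge \(e=\T_\alpha\cap\T_\beta\), with \(g_\beta=s\circ g_\alpha\) for the side reflection \(s\) whose sign is \(\varepsilon\).
\begin{itemize}[leftmargin=2em]
\item If \(\varepsilon=-1\), the side is Dirichlet. Both traces vanish, and the normal derivatives match because the odd reflection of a Dirichlet solution is \(C^1\).
\item If \(\varepsilon=+1\), the side is Neumann. The traces match, and both normal derivatives vanish.
\end{itemize}
This is the same computation as in the converse half of \cref{prop:parity-decomposition}. It shows that \(\widetilde u_h\in H^1_{\rm loc}(X^\circ\setminus\text{vertices})\) and that \(-h^2\Delta\widetilde u_h=\widetilde u_h\) weakly across every open edge. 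At a regular vertex (total angle \(2\pi\)) the extended function solves the Helmholtz equation on a punctured disc and is bounded. A point has zero capacity in two dimensions, so the puncture is removable and \(\widetilde u_h\) is smooth there.

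\emph{Step 3 (cone points: the main obstacle).} The hard part is showing that \(\widetilde u_h\) lies in the Friedrichs domain at the two \(4\pi\) cone points, rather than in some other self-adjoint extension. I would argue as follows.
\begin{enumerate}[label=(\roman*),leftmargin=2.4em]
\item \(u_h\in H^1(\T)\), so \(\widetilde u_h\in H^1(X)\), since \(H^1\) is glued from matching traces and the points are negligible. In particular \(\widetilde u_h\) lies in the Friedrichs form domain.
\item It remains to check that \(\langle\nabla\widetilde u_h,\nabla\varphi\rangle=h^{-2}\langle\widetilde u_h,\varphi\rangle\) for every \(\varphi\in H^1(X)\), not merely for \(\varphi\) supported away from the cones.
\item Points have zero capacity in two dimensions, so \(C_c^\infty(X^\circ)\) is dense in \(H^1(X)\), using log-cutoffs \(\chi_\delta\) whose gradients have \(L^2\) norm tending to \(0\).
\item The identity therefore extends from Step 2 by density. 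This places \(\widetilde u_h\) in the Friedrichs operator domain and gives the global weak equation.
\end{enumerate}
As a cross-check, the separation-of-variables expansion in the \(4\pi\) cone gives only modes \(J_{k/2}(r/h)e^{ik\theta/2}\) with \(k\ge0\), which are consistent with this.

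\emph{Step 4 (norm identities).} The copies have disjoint interiors, and seams and cone points have measure zero. Each \(g_\alpha\) is a measure-preserving isometry and \(|\epsilon_\alpha|=1\). Summing over the \(N_{\rm sh}\) copies gives \(\|\widetilde u_h\|^2_{L^2(X)}=N_{\rm sh}\|u_h\|^2_{L^2(\T)}\). By the definition of \(\widetilde W\) in \cref{sec:notation}, \(\widetilde W\cap\T_\alpha^\circ=g_\alpha^{-1}(W\cap\T^\circ)\) up to a null set, and the same summation gives the identity on \(\widetilde W\).
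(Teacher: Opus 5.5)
Your proposal is correct. It agrees with the paper on the reflection lift, the seam matching and the norm identities, but it handles the two $4\pi$ cone points by a different argument.

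\emph{The paper's route.} It works through the local expansion. By the mixed-corner analysis of \cref{app:conic}, only the angular orders $\nu_k=3k+\tfrac32$ occur. The Friedrichs condition retains only the regular branches $r^{\nu_k}$, so the lift vanishes like $r^{3/2}$ with square-integrable gradient. The Green boundary term on $\{r=\varepsilon\}$ then tends to zero, and the representation theorem is applied.

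\emph{Your route.} You use only $\widetilde u_h\in H^1(X)$ and the zero capacity of points in dimension two; the $4\pi$ cone is locally bi-Lipschitz to a disc. Hence $C_c^\infty(X^\circ)$ is dense in $H^1(X)$, which is the Friedrichs form domain, and the weak identity extends from $C_c^\infty(X^\circ)$ by continuity.

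\emph{Comparison.} Your argument is shorter and independent of the cone angle and of the angular spectrum. It needs no control of a boundary term. It also disposes of the lifted unmixed vertices, which the paper does not treat separately. The paper's argument gives more local information: the $r^{3/2}$ vanishing and the restriction to mixed-character modes, which its conic-scattering discussion uses.

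Two small points.
\begin{enumerate}
\item In your cross-check the modes should be $J_{|n|/2}(r/h)e^{in\theta/2}$ with $n\in\mathbb Z$. Only $n\equiv 3\pmod 6$ actually occur, which reproduces the paper's orders $3k+\tfrac32$.
\item In Step~1, trivial sign around the vertices does not by itself give single-valuedness on a genus-two surface. Loops along the torus cycles must also have trivial sign. This holds only because $X$ is the double cover determined by the full sign homomorphism of the twice-punctured torus. Concretely, $X$ is two copies of the hexagon formed by the six triangles around the unmixed vertex, with each side glued to the parallel opposite side of the other copy. The paper's proof takes this from \cref{prop:mixed-monodromy} in the same way. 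So this concerns how the justification is stated, not a gap relative to the paper.
\end{enumerate}
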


\begin{proof}
Across a Dirichlet side we extend \(u_h\) oddly, and across a Neumann side
evenly. The corresponding traces and normal derivatives match across each
regular unfolded seam in the weak Green identity, so no distributional
source is produced there. Iterating these reflections and resolving the
mixed monodromy by the two-sheeted construction of
\cref{prop:mixed-monodromy} yields a globally defined function
\(\widetilde u_h\) on \(X^\circ\) that is locally \(H^1\) and satisfies
\[
    (-h^2\Delta_X-1)\widetilde u_h=0
\]
weakly across every regular seam.

It remains to verify the behavior at the two cone points. The mixed
Laplacian on \(\T\) is the self-adjoint operator associated with the closed
mixed Dirichlet--Neumann quadratic form, and finite odd/even reflection
preserves the local \(H^1\) energy. Hence, \(\widetilde u_h\) belongs locally
to the Friedrichs form domain near each cone point. By the mixed-corner
analysis in \cref{app:conic}, the admissible angular orders are
\[
    \nu_k=3k+\frac32,
    \qquad k=0,1,2,\ldots .
\]
The Friedrichs condition selects the regular radial branches
\(r^{\nu_k}\); in particular, \(\nu_k\ge 3/2\). Consequently,
\(\widetilde u_h\) and its first derivatives are locally square integrable,
and the boundary contribution in Green's identity on a circle
\(r=\varepsilon\) tends to zero as \(\varepsilon\downarrow0\).
The weak Helmholtz identity therefore extends across each cone point.

Thus, \(\widetilde u_h\) belongs to the Friedrichs form domain and satisfies
the weak eigenvalue identity against every test function in that form
domain. By the representation theorem for the closed Friedrichs form,
\(\widetilde u_h\) belongs to the operator domain of the Friedrichs
Laplacian and is an exact eigenfunction of \(-\Delta_X\).

Finally, the unfolding consists of \(N_{\rm sh}\) congruent copies of
\(\T\). In each copy, the construction is an isometry followed, possibly,
by multiplication by \(-1\), and hence preserves the pointwise modulus.
Summing over the copies gives
\[
    \|\widetilde u_h\|_{L^2(X)}^2
    =
    N_{\rm sh}\|u_h\|_{L^2(\T)}^2.
\]
Since \(\widetilde W\) is the full lift of \(W\), the same argument restricted
to the lifted observation set gives
\[
    \|\widetilde u_h\|_{L^2(\widetilde W)}^2
    =
    N_{\rm sh}\|u_h\|_{L^2(W)}^2.
\]
\end{proof}

\begin{proposition}[Arithmetic Veech property of the mixed unfolding]
\label{prop:arithmetic-veech}
The translation surface \(X\) is affinely equivalent to a square-tiled
surface.  In particular, its Veech group is a lattice in
\(\mathrm{SL}(2,\mathbb R)\), and every direction on \(X\) is either
completely periodic or uniquely ergodic.
\end{proposition}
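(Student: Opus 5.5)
The plan is to realize \(X\) explicitly as a cover of a flat torus whose period lattice is commensurable with \(\Lambda_\triangle\), and then apply standard results on square-tiled surfaces.

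By \cref{prop:mixed-monodromy}, \(X\) is a degree-two translation cover \(\Pi:X\to\mathbb T^2_\triangle=\mathbb R^2/\Lambda_{\rm tor}\). It is branched exactly over the images \(q_1,q_2\) of the two mixed vertices. Here \(\Lambda_{\rm tor}\) is the translation lattice of the triangular unfolding, a finite-index sublattice of \(\Lambda_\triangle\) or at worst commensurable with it. The vertices of the reflected triangle tiling all lie in a translate of \(\frac12\Lambda_\triangle\), in fact of \(\Lambda_\triangle\) itself when the vertex is placed at the origin. Hence \(q_1-q_2\in\Lambda'\) for a lattice \(\Lambda'\) commensurable with \(\Lambda_\triangle\).

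Take the linear map \(A\in\mathrm{GL}(2,\mathbb R)\) with \(Ae_1=(1,0)\) and \(Ae_2=(0,1)\), rescaled to determinant one if desired. It sends \(\Lambda_\triangle\) to \(\mathbb Z^2\), so it sends \(\Lambda_{\rm tor}\) and \(\Lambda'\) into \(\frac1N\mathbb Z^2\) for some integer \(N\). Translating so that \(q_1\) is the origin, the composite \(X\to\mathbb R^2/\Lambda_{\rm tor}\to\mathbb R^2/\Lambda''\), with \(\Lambda''\supset\Lambda_{\rm tor}\) the lattice generated by \(\Lambda_{\rm tor}\) and \(q_2-q_1\), is a translation cover of a torus. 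This cover is branched over the single point \(0\). After applying \(A\) and rescaling \(\mathbb R^2/A\Lambda''\) to the standard torus, \(A\cdot X\) is a translation cover of \(\mathbb R^2/\mathbb Z^2\) branched over one point, so it is square-tiled. This proves affine equivalence. Every translation cover of a torus branched over torsion points (equivalently, a single point after passing to the quotient) is square-tiled. The main care is checking that the two branch points differ by a point of a lattice commensurable with \(\Lambda_\triangle\), so that a single torus reduction works.

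Gutkin--Judge then gives the rest. The Veech group of a square-tiled surface is commensurable with a finite-index subgroup of \(\mathrm{SL}(2,\mathbb Z)\), hence a lattice. Since Veech groups are equivariant under affine conjugation, \(\Gamma(X)=A^{-1}\Gamma(AX)A\) is a lattice in \(\mathrm{SL}(2,\mathbb R)\). Veech's dichotomy for lattice surfaces then says that every direction is either completely periodic, with a decomposition into finitely many cylinders of commensurable moduli, or uniquely ergodic.

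For square-tiled surfaces this can be seen more directly. A direction with rational slope in the \(A\)-coordinates lifts from a periodic direction on the torus, so it is completely periodic. An irrational slope is uniquely ergodic by Veech's theorem, or more elementarily by the lattice property. Transporting by \(A^{-1}\) identifies the periodic directions on \(X\) as those in \(A^{-1}\mathbb Q^2\), the rational directions of \(\Lambda_\triangle\). I expect the only real obstacle to be the bookkeeping of the branch-point positions relative to the lattice; the dynamical conclusions are quoted from Veech and Gutkin--Judge.
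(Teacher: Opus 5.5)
Your proposal is correct and is essentially the paper's argument. The paper checks directly that all absolute periods and all cone-to-cone relative periods lie in \(\Lambda_\triangle\), so that the developing coordinate modulo \(\Lambda_\triangle\) gives a translation cover of \(\mathbb R^2/\Lambda_\triangle\) branched only over the common image of the two cone points; this is exactly your composite \(X\to\mathbb R^2/\Lambda_{\rm tor}\to\mathbb R^2/\Lambda''\), and in this model \(\Lambda''=\Lambda_\triangle\) because the two mixed vertices lie in different classes of \(\Lambda_\triangle/\Lambda_{\rm tor}\cong\mathbb Z/3\). Consequently \(A\) already sends \(\Lambda''\) to \(\mathbb Z^2\) and your extra ``rescaling'' step is unnecessary (in general it would have to be a linear map rather than a dilation), and the lattice property and the dichotomy then follow from Gutkin--Judge and Veech exactly as you describe.
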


\begin{proof}
Choose one developed singular vertex as origin.  The surface \(X\) is
assembled from finitely many reflected equilateral-triangle copies, with all
edge identifications given by translations after the orientation-preserving
unfolding.  Hence, every developed vertex lies in the triangular lattice
\(\Lambda_\triangle\).  In particular, a relative period joining two cone
points is the displacement between two such developed vertices and therefore
belongs to \(\Lambda_\triangle\).  Likewise, a closed translation path lifts
in the development to a segment whose endpoints are two developed lifts of
the same point; composing with the triangle-copy identifications shows that
its displacement is again a vector of \(\Lambda_\triangle\).  Thus, all
absolute periods, as well as all relative periods between singularities, lie
in \(\Lambda_\triangle\).

Choose \(A\in\mathrm{GL}^+(2,\mathbb R)\) with
\(Ae_1=(1,0)\) and \(Ae_2=(0,1)\).  Then all absolute and singular-to-singular
relative periods of \(AX\) lie in \(\mathbb Z^2\).  Consequently, the
developing coordinate modulo \(\mathbb Z^2\) is independent of the chosen
path and defines a translation map
\[
 AX\longrightarrow\mathbb R^2/\mathbb Z^2.
\]
The relative-period statement implies that the two cone points have the same
image modulo \(\mathbb Z^2\); after translating the torus coordinate we may
take this image to be the origin.  Away from the cone points the map is a
local translation, hence a local homeomorphism.  Since \(AX\) is compact, the
map is therefore a finite translation covering, with branching possible only
at the cone points and hence only over the origin of the square torus.
Equivalently, \(AX\) is square-tiled.  The theorem of Gutkin--Judge then makes
the Veech group arithmetic and hence a lattice \cite{GutkinJudge2000}.
Veech's dichotomy for lattice surfaces gives the periodic/uniquely-ergodic
alternative \cite{Veech1989,SmillieWeiss2008}.
\end{proof}

\begin{remark}
The arithmetic structure enters the argument through the directional
dichotomy, which excludes proper minimal components from the support of a
semiclassical measure carrying no mass in the observation region. For
admissible observation sets, the fixed punctured neighborhood of the cone set
removes concentration near the conic points before the regular-flow analysis
is applied. Consequently, the proof of the observability theorem proceeds
entirely through propagation on the regular part and the resulting
periodic-cylinder decomposition.
\end{remark}


\section{High-frequency reduction away from the cone set}
\label{sec:high-frequency}

Throughout this section, \(X\) denotes the exact Friedrichs
translation-surface unfolding constructed in \cref{prop:unfolding}. Let
\(W\in\mathcal O_{\rm adm}(\T)\) and set \(O=\widetilde W\).

Suppose, for contradiction, that the high-frequency observability estimate
fails. Then, there exist normalized exact mixed-sector eigenfunctions \(u_j\)
and a sequence \(h_j\to0\) such that
\[
    (-h_j^2\Delta-1)u_j=0,\qquad
    \|u_j\|_{L^2(\T)}=1,\qquad
    \|u_j\|_{L^2(W)}\longrightarrow0.
\tag{7.1}
\]
After unfolding and normalization by \(N_{\rm sh}^{-1/2}\),
\cref{prop:unfolding} gives exact Friedrichs eigenfunctions
\(\widetilde u_j\) on \(X\) satisfying
\[
    \|\widetilde u_j\|_{L^2(X)}=1,
    \qquad
    \|\widetilde u_j\|_{L^2(O)}\longrightarrow0.
\tag{7.2}
\]
Let \(U_{\mathcal C}^{\times}\) be the punctured cone neighborhood appearing
in \cref{def:admissible-observation}. Since
\(U_{\mathcal C}^{\times}\subset O\),
\[
    \|\widetilde u_j\|_{L^2(U_{\mathcal C}^{\times})}
    \longrightarrow0.
\tag{7.3}\label{eq:dark-measure-pre}
\]
On the smooth manifold
\[
    X^\circ=X\setminus\mathcal C_X,
\]
standard local semiclassical compactness yields, after passage to a
subsequence and diagonal extraction, a positive Radon measure
\(\mu\) on \(S^*X^\circ\). We first verify that no mass is lost at the
deleted cone points. Choose
\(0<\varepsilon'<\varepsilon_{\mathcal C}\) and
\(\chi\in C_c^\infty(X^\circ)\), with \(0\le\chi\le1\), such that
\[
    \chi=1
    \quad\text{when}\quad
    d_X(\,\cdot\,,\mathcal C_X)\ge\varepsilon',
\]
and
\[
    \operatorname{supp}(1-\chi)\setminus\mathcal C_X
    \subset U_{\mathcal C}^{\times}.
\]
Then, \eqref{eq:dark-measure-pre} implies
\[
    \|(1-\chi)\widetilde u_j\|_{L^2(X)}
    \longrightarrow0,
    \qquad
    \|\chi\widetilde u_j\|_{L^2(X)}
    \longrightarrow1.
\]
Passing to the semiclassical limit gives
\[
    \mu(S^*X^\circ)=1.
\]
Thus, no semiclassical mass is lost at the singular set.

We next record the darkness of the limiting measure on the observation set.
For every compact \(K\Subset O\), choose
\(a_K\in C_c^\infty(O)\) with
\(0\le a_K\le1\) and \(a_K=1\) on a neighborhood of \(K\). Then
\[
    0\le
    \langle a_K\widetilde u_j,\widetilde u_j\rangle_{L^2(X)}
    \le
    \|\widetilde u_j\|_{L^2(O)}^2
    \longrightarrow0.
\]
Hence,
\[
    \mu(\pi^{-1}(K))=0
    \qquad\text{for every }K\Subset O.
\]
By inner regularity of the Radon measure \(\mu\),
\[
    \mu(\pi^{-1}(O))=0.
\tag{7.4}\label{eq:dark-measure}
\]
\begin{proposition}[Regular support and invariance]
\label{prop:regular-propagation}
The measure \(\mu\) is supported on
\[
    p^{-1}(0)=S^*X^\circ,
    \qquad
    p(x,\xi)=|\xi|^2-1,
\]
and is invariant under the Hamiltonian flow on every regular flow box
compactly contained in \(X^\circ\).
\end{proposition}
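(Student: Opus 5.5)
The proof will use only local semiclassical calculus on the smooth open manifold \(X^\circ\). The cone points play no role, because every test symbol below is compactly supported in \(T^*X^\circ\). Fix \(a\in C_c^\infty(T^*X^\circ)\) and choose \(\psi\in C_c^\infty(X^\circ)\) with \(\psi=1\) near \(\pi(\operatorname{supp}a)\). Since \(\widetilde u_j\) is an exact eigenfunction of the Friedrichs Laplacian and the Friedrichs operator coincides with the flat Laplacian on \(X^\circ\), the localized functions \(v_j:=\psi\widetilde u_j\) satisfy
\[
P_{h_j}v_j=-h_j^2[\Delta,\psi]\widetilde u_j=O_{L^2}(h_j),
\]
using the standard interior bound \(\|h_j\nabla\widetilde u_j\|_{L^2(\operatorname{supp}\nabla\psi)}=O(1)\) (Caccioppoli/elliptic estimate from the equation). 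The commutator term is in fact supported where \(\nabla\psi\neq0\), so it is a local \(O(h_j)\) quasimode error. This is enough for both conclusions, since \(\mu\) restricted over \(\{\psi=1\}\) agrees with the measure generated by \(v_j\).

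\textbf{Support.} Let \(a\) be supported away from \(p^{-1}(0)\). By ellipticity of \(p\) there, build a parametrix \(Q=\Op_h(a/p)+O(h)\) with \(\Op_h(a)=Q P_h+O(h)\) microlocally near \(\operatorname{supp}a\). Then
\[
\langle\Op_{h_j}(a)v_j,v_j\rangle=\langle Q P_{h_j}v_j,v_j\rangle+O(h_j)\to0.
\]
So \(\mu\) vanishes off \(p^{-1}(0)\). The region \(|\xi|\to\infty\) is handled by the same elliptic argument with symbols in the appropriate class, and mass is not lost there because the functions are \(h\)-oscillating, i.e. \(\|h\nabla v_j\|\) is bounded. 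Combined with \(\mu(S^*X^\circ)=1\) already established, this gives \(\operatorname{supp}\mu\subset S^*X^\circ\).

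\textbf{Invariance.} For real \(a\in C_c^\infty(T^*X^\circ)\), use the commutator identity with the self-adjoint \(P_h\) (on functions supported in \(\{\psi=1\}\)):
\[
\tfrac{i}{h}\langle[P_h,\Op_h(a)]v_j,v_j\rangle
=\tfrac{i}{h}\bigl(\langle \Op_h(a)v_j,P_hv_j\rangle-\langle \Op_h(a)P_hv_j,v_j\rangle\bigr).
\]
Here the main obstacle appears. An \(O(h)\) error divided by \(h\) is only \(O(1)\), not \(o(1)\). I would resolve this by taking \(a\) supported in a flow box \(\Subset T^*X^\circ\) and choosing \(\psi=1\) on a neighbourhood of \(\pi(\operatorname{supp}a)\). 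Then \(P_hv_j=P_h\widetilde u_j=0\) there exactly, so \(\Op_h(a)P_hv_j=O(h^\infty)\) by disjointness of supports. The leftover \(\psi\)-commutator error is microlocally disjoint from \(a\) and also contributes \(O(h^\infty)\). Hence the right-hand side is \(o(1)\).

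The symbol calculus gives \(\tfrac{i}{h}[P_h,\Op_h(a)]=\Op_h(\{p,a\})+O(h)\). Therefore \(\int\{p,a\}\,d\mu=0\) for all such \(a\). Integrating along \(\Phi_t\) for \(|t|\) small enough that the flow stays in the regular flow box yields \(\int a\circ\Phi_t\,d\mu=\int a\,d\mu\), which is invariance on every regular flow box compactly contained in \(X^\circ\). No global statement across the cone set is claimed, consistent with \cref{rem:cone-guard-status}.
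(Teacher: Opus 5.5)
Your proposal is correct and follows the paper's argument. Ellipticity of \(p\) away from \(p^{-1}(0)\) gives the support statement, and the commutator identity \(\frac{i}{h}\langle[P_h,\Op_h(a)]\widetilde u_j,\widetilde u_j\rangle\), whose principal symbol is \(H_pa\), gives local invariance on regular flow boxes. The only difference is that the paper applies this identity directly to the exact eigenfunction \(\widetilde u_j\), where it vanishes identically, whereas you first localize by \(\psi\) and check that the cutoff errors are \(O(h^\infty)\) by disjointness of supports; this is a slightly more careful version of the same step.
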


\begin{proof}
Let \(A_h\in\Psi_h^0(X^\circ)\) be compactly supported away from the cone
set. Elliptic regularity applied to
\[
    (-h_j^2\Delta_X-1)\widetilde u_j=0
\]
gives
\[
    \operatorname{supp}\mu\subset\{p=0\}.
\]
If \(B_h\in\Psi_h^0(X^\circ)\) has principal symbol \(b\) and compact support
in a regular flow box, then
\[
    \frac{i}{h_j}
    \left\langle
    [-h_j^2\Delta_X-1,B_{h_j}]
    \widetilde u_j,\widetilde u_j
    \right\rangle
    =0.
\]
The principal symbol of the commutator is \(H_pb\). Passing to the
semiclassical limit therefore gives
\[
    \int_{S^*X^\circ} H_pb\,d\mu=0,
\]
which is the required local invariance.
\end{proof}

\begin{lemma}[Cone-neighborhood removal]
\label{lem:cone-guard-removal}
If a regular phase point reaches \(U_{\mathcal C}^{\times}\) in finite
forward or backward time before meeting a cone point, then it does not belong
to \(\operatorname{supp}\mu\). Consequently, every trajectory contained in
\(\operatorname{supp}\mu\) is a complete regular trajectory that remains a
positive distance from \(\mathcal C_X\).
\end{lemma}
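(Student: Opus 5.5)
The plan is to combine the two facts already established for the limiting measure: darkness on the observation set, \(\mu(\pi^{-1}(O))=0\) from \eqref{eq:dark-measure}, and local invariance under the regular Hamiltonian flow from \cref{prop:regular-propagation}. Because \(U_{\mathcal C}^{\times}\subset O\), the punctured cone neighborhood is itself dark, \(\mu(\pi^{-1}(U_{\mathcal C}^{\times}))=0\). The lemma will follow once invariance is used to transport this darkness backward and forward along regular trajectories, up to the first time they meet the guard.

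\emph{Step 1.} Let \(\rho_0\in S^*X^\circ\) and suppose \(\Phi_{t_0}(\rho_0)\in\pi^{-1}(U_{\mathcal C}^{\times})\) for some \(t_0\) (take \(t_0>0\); the backward case is symmetric). Assume the segment \(\{\Phi_t(\rho_0):0\le t\le t_0\}\) avoids \(\mathcal C_X\). Being compact, it stays a positive distance \(\delta\) from the cone set. Choose a small open phase-space neighborhood \(V\) of \(\Phi_{t_0}(\rho_0)\) with \(\pi(V)\subset U_{\mathcal C}^{\times}\); this is possible since \(U_{\mathcal C}^{\times}\) is open in \(X^\circ\). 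Shrinking \(V\), the regular flow \(\Phi_{-t}\) is defined on \(V\) for \(t\in[0,t_0]\), and the tube \(\bigcup_{t\in[0,t_0]}\Phi_{-t}(V)\) stays at distance \(\ge\delta/2\) from \(\mathcal C_X\). Cover this tube by finitely many regular flow boxes compactly contained in \(X^\circ\), using a Lebesgue-number argument in \(t\).

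\emph{Step 2.} Apply \cref{prop:regular-propagation} in each box. The identity \(\int H_p b\,d\mu=0\) for \(b\) supported in a flow box yields \((\Phi_s)_*\mu=\mu\) on subsets whose flow-out for \(|s|\le s_0\) stays in that box; this is the standard integration of the infinitesimal invariance, taking \(b=a\circ\Phi_{-s}\) and differentiating in \(s\). Chaining finitely many short steps gives \(\mu(\Phi_{-t_0}(V))=\mu(V)\le\mu(\pi^{-1}(U_{\mathcal C}^{\times}))=0\). Since \(\Phi_{-t_0}(V)\) is an open neighborhood of \(\rho_0\), we get \(\rho_0\notin\operatorname{supp}\mu\). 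This proves the first sentence.

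\emph{Step 3.} For the consequence, take \(\rho\in\operatorname{supp}\mu\) and follow its trajectory in both time directions. If the trajectory ever reached a cone point, it would have to pass through distances \(d_X\in(0,\varepsilon_{\mathcal C})\) first, since the distance to \(\mathcal C_X\) along a unit-speed straight line varies continuously. It would therefore enter \(U_{\mathcal C}^{\times}\) strictly before meeting \(\mathcal C_X\), and Step 1 would exclude \(\rho\) from the support. Hence the trajectory is defined for all time and never enters \(U_{\mathcal C}^{\times}\). The whole orbit therefore satisfies \(d_X\ge\varepsilon_{\mathcal C}\), so it is complete, regular, and uniformly separated from \(\mathcal C_X\).

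The main obstacle is Step 2: passing from the infinitesimal invariance of \cref{prop:regular-propagation}, which is stated only within individual regular flow boxes, to invariance along a finite-time segment. The plan here is to use compactness of the segment to get a uniform distance from the cone set and a uniform flow-box size, then concatenate finitely many box-local pushforward identities on nested neighborhoods.
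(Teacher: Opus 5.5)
Your proposal is correct and follows the paper's own proof. Both arguments use \(U_{\mathcal C}^{\times}\subset O\) to get darkness over the guard, then propagate that vanishing backward along a compact regular segment covered by finitely many flow boxes using \cref{prop:regular-propagation}, and finally observe that any trajectory reaching a cone point must first cross \(U_{\mathcal C}^{\times}\). Your Step 3 is slightly more explicit than the paper: it gives the bound \(d_X\ge\varepsilon_{\mathcal C}\) along supported trajectories, where the paper only appeals to closedness of the support and finiteness of the cone set.
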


\begin{proof}
Since \(U_{\mathcal C}^{\times}\subset O\),
\eqref{eq:dark-measure} implies that \(\mu\) vanishes on the phase-space lift
of every compact subset of \(U_{\mathcal C}^{\times}\). Let a regular orbit
segment join the given phase point to such a compact subset. Since the
segment is compact and avoids the cone set before reaching the observed
neighborhood, it can be covered by finitely many regular flow boxes.
Successive application of the local invariance in
\cref{prop:regular-propagation} propagates the vanishing of \(\mu\) backward
along this segment. Hence, \(\mu\) vanishes in a neighborhood of the initial
phase point.

Any regular trajectory reaching a cone point must first enter
\(U_{\mathcal C}^{\times}\). Such a trajectory therefore cannot meet
\(\operatorname{supp}\mu\). Since the cone set is finite and
\(\operatorname{supp}\mu\) is closed, every trajectory in the support is
complete and remains a positive distance from \(\mathcal C_X\).
\end{proof}

\begin{lemma}[Directional disintegration]
\label{lem:directional-disintegration}
The translation structure defines a direction map
\[
    \vartheta:S^*X^\circ\longrightarrow\mathbb S^1.
\]
If \(\nu=\vartheta_*\mu\), then
\[
    \mu=\int_{\mathbb S^1}\mu_\theta\,d\nu(\theta),
\]
where, for \(\nu\)-almost every \(\theta\), \(\mu_\theta\) is an invariant
probability measure supported on complete regular trajectories of direction
\(\theta\).
\end{lemma}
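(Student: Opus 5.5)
The plan is to build the direction map from the translation structure, show that it is invariant under the regular flow, and then apply the standard disintegration theorem.

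\emph{Direction map.} On \(X^\circ\) the translation atlas has transition maps that are translations. The differential of a chart therefore identifies each cotangent fiber \(T^*_xX^\circ\) with \(\mathbb R^2\), and this identification does not depend on the chart. On \(S^*X^\circ\) we may then define \(\vartheta(x,\xi)=\xi/|\xi|\in\mathbb S^1\). It is well defined and continuous, hence Borel. Since \(p=|\xi|^2-1\), the Hamiltonian flow is \(\Phi_t(x,\xi)=(x+2t\xi,\xi)\) in any chart. Thus \(\xi\) is constant along regular orbits, and \(\vartheta\circ\Phi_t=\vartheta\) wherever the flow is defined.

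\emph{Global flow on the support.} By \cref{lem:cone-guard-removal}, the closed set \(\Sigma:=\operatorname{supp}\mu\) is a union of complete regular trajectories, all at positive distance from \(\mathcal C_X\). On \(\Sigma\) the flow \(\Phi_t\) is therefore defined for all \(t\in\mathbb R\) and is continuous. We next show that \(\mu\) is invariant under this global flow. Take \(b\in C_c(S^*X^\circ)\) and a fixed time \(t\). The compact set \(\bigcup_{|s|\le |t|}\Phi_s(\Sigma\cap\operatorname{supp}b)\) avoids \(\mathcal C_X\); this is the point where the positive-distance conclusion of \cref{lem:cone-guard-removal} is used. It can be covered by finitely many regular flow boxes. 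A partition of unity subordinate to these boxes, combined with the local invariance \(\int H_pb\,d\mu=0\) of \cref{prop:regular-propagation}, gives \(\frac{d}{ds}\int b\circ\Phi_s\,d\mu=0\) for \(|s|\le|t|\). Hence \(\int b\circ\Phi_t\,d\mu=\int b\,d\mu\), so \((\Phi_t)_*\mu=\mu\). The main technical care goes into this step, namely making sure that no flow box used here meets the cone set.

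\emph{Disintegration.} The space \(S^*X^\circ\) is a Polish space, and the measure \(\mu\) is a probability measure since \(\mu(S^*X^\circ)=1\). The Rokhlin disintegration theorem along \(\vartheta\) gives probability measures \(\mu_\theta\), defined for \(\nu\)-a.e.\ \(\theta\), that are concentrated on \(\vartheta^{-1}(\theta)\) and satisfy \(\mu=\int\mu_\theta\,d\nu\). Because \(\mu(S^*X^\circ\setminus\Sigma)=0\), we also get \(\mu_\theta(\Sigma)=1\) for a.e.\ \(\theta\). Thus each \(\mu_\theta\) lives on complete regular trajectories of direction \(\theta\).

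\emph{Invariance of the fibre measures.} For each \(t\), the family \((\Phi_t)_*\mu_\theta\) is again a disintegration of \(\mu\) along \(\vartheta\), because \(\Phi_t\) preserves \(\vartheta\) and \(\mu\). Uniqueness of the disintegration then gives \((\Phi_t)_*\mu_\theta=\mu_\theta\) for \(\nu\)-a.e.\ \(\theta\). The exceptional null set may depend on \(t\). To remove this dependence, take \(t\) in \(\mathbb Q\) and let \(b\) range over a countable dense subset of \(C_c(S^*X^\circ)\); a single null set then works for all of these. Continuity of \(t\mapsto\int b\circ\Phi_t\,d\mu_\theta\) (by dominated convergence on \(\Sigma\)) extends invariance to all real \(t\).
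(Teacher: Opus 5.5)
Your proposal is correct and follows essentially the same route as the paper. You disintegrate \(\mu\) along the flow-invariant direction map \(\vartheta\), pass invariance to the conditional measures, and get completeness of the supporting trajectories from \cref{lem:cone-guard-removal}; the only difference is that you first globalize to \((\Phi_t)_*\mu=\mu\) and then use uniqueness of the disintegration, whereas the paper tests the local identity \(\int H_pb\,d\mu=0\) against symbols also localized in the direction variable, and your version supplies details the paper leaves implicit.
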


\begin{proof}
The measure disintegration theorem applied to the Borel map \(\vartheta\)
gives the stated decomposition. Since direction is constant along every
regular translation trajectory, the Hamiltonian flow preserves the fibers
of \(\vartheta\). Testing the invariance identity of
\cref{prop:regular-propagation} against symbols localized also in the
direction variable and then disintegrating shows that, for
\(\nu\)-almost every \(\theta\), the conditional measure \(\mu_\theta\) is
invariant under the directional flow wherever that flow is regular.
Completeness of the trajectories in its support follows from
\cref{lem:cone-guard-removal}.
\end{proof}

\begin{proposition}[Veech reduction of a dark measure]
\label{prop:directional-reduction}
For \(\nu\)-almost every direction carrying conditional mass, the direction
is completely periodic. In each such direction, the corresponding
conditional measure is supported in the interiors of the maximal periodic
cylinders.
\end{proposition}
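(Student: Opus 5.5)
We combine the directional disintegration of \cref{lem:directional-disintegration} with the Veech dichotomy of \cref{prop:arithmetic-veech}, and treat the two alternatives separately.

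\emph{Uniquely ergodic directions.} Fix \(\theta\) in the set of \(\nu\)-generic directions carrying conditional mass, and suppose \(\theta\) is uniquely ergodic. The plan is to show that \(\mu_\theta\) coincides with normalized Liouville (area) measure on the fibre \(\vartheta^{-1}(\theta)\cong X^\circ\). By \cref{lem:directional-disintegration}, \(\mu_\theta\) is an invariant probability measure for the directional flow, and it lives on complete regular trajectories. We extend it by zero over the cone set, which is legitimate by \cref{lem:cone-guard-removal} because the support stays a positive distance from \(\mathcal C_X\). The result is an invariant probability for the directional flow on \(X\). Unique ergodicity then forces it to be normalized area. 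Area measure gives positive mass to \(U_{\mathcal C}^\times\subset O\), while \eqref{eq:dark-measure} gives \(\mu_\theta(\pi^{-1}(O))=0\) for \(\nu\)-a.e.\ \(\theta\), since \(\mu(\pi^{-1}(O))=0\) disintegrates. This is a contradiction, so uniquely ergodic directions carry no conditional mass. The same conclusion follows more cheaply: a uniquely ergodic direction is minimal, so every regular orbit is dense and enters \(U_{\mathcal C}^\times\), and \cref{lem:cone-guard-removal} rules out every such orbit. I will present this minimality argument, since it avoids the measure extension entirely. By the Veech dichotomy, \(\nu\)-almost every direction carrying mass is therefore completely periodic.

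\emph{Completely periodic directions.} Fix such a \(\theta\). Then \(X\) decomposes into finitely many maximal cylinders \(C_{\theta,\alpha}\) whose boundaries consist of saddle connections joining cone points. Any regular trajectory in direction \(\theta\) is of one of two kinds: it is a closed orbit in the interior of some \(C_{\theta,\alpha}\), or it lies on a boundary saddle connection and hence hits \(\mathcal C_X\) in finite time. \cref{lem:cone-guard-removal} shows that support trajectories are complete and avoid the cone set. Hence \(\operatorname{supp}\mu_\theta\) avoids the saddle connections, and \(\mu_\theta\) is carried by \(\bigcup_\alpha\operatorname{int}C_{\theta,\alpha}\). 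This is exactly the second assertion.

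\emph{Expected obstacle.} The delicate point is measure-theoretic. Invariance of \(\mu_\theta\) holds only for \(\nu\)-a.e.\ \(\theta\), and it is obtained by testing against countably many symbols. We must also pass from ``support'' statements about \(\mu\) to statements about \(\mu_\theta\). I would handle this in three steps:
\begin{enumerate}
\item Show that the \(\mu\)-null, closed-complement set \(\operatorname{supp}\mu\) contains no phase point whose trajectory meets \(\pi^{-1}(O)\) or \(\mathcal C_X\); this is \cref{lem:cone-guard-removal}.
\item Observe that \(\mu_\theta\) is concentrated on \(\operatorname{supp}\mu\cap\vartheta^{-1}(\theta)\) for \(\nu\)-a.e.\ \(\theta\).
\item Conclude that, apart from a \(\nu\)-null set, the only regular trajectories available are closed orbits interior to cylinders. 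This also uses the fact that the set of uniquely ergodic directions, being the complement of a countable set, is Borel.
\end{enumerate}
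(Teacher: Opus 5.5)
Your proposal is correct. The completely periodic step is the paper's argument: saddle connections end at cone points, so \cref{lem:cone-guard-removal} leaves only cylinder interiors. Your first sketch of the uniquely ergodic step is also the paper's: extend \(\mu_\theta\) by zero across \(\mathcal C_X\), identify it with normalized area by unique ergodicity, and contradict \(\mu_\theta(\pi^{-1}(O))=0\) for \(\nu\)-a.e.\ \(\theta\).

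The route you actually adopt replaces this measure-rigidity argument by a topological one. Every phase point over a uniquely ergodic direction falls into one of two cases. Either it reaches \(\mathcal C_X\) in finite forward or backward time, and so passes through \(U_{\mathcal C}^\times\) first. Or it lies on a bi-infinite regular orbit, which is dense by minimality and hence also enters \(U_{\mathcal C}^\times\). In both cases \cref{lem:cone-guard-removal} places the point outside \(\operatorname{supp}\mu\). You should split your phrase ``every regular orbit is dense'' into these two cases, since orbits running into a cone point are not dense. Hence \(\vartheta^{-1}(\mathrm{UE})\subset(\operatorname{supp}\mu)^c\), a \(\mu\)-null set, and \(\nu(\mathrm{UE})=\mu(\vartheta^{-1}(\mathrm{UE}))=0\) once the co-countable set UE is known to be Borel, as you note.

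This route gains two things. First, you never use the fiberwise invariance of \(\mu_\theta\) from \cref{lem:directional-disintegration}, nor the extension across the cone points. Second, the periodic directions of a lattice surface are countable, so \(\nu\) is then purely atomic. Each \(\mu_\theta\) is therefore simply the normalized restriction \(\mu|_{\vartheta^{-1}(\theta)}/\nu(\{\theta\})\). The whole proposition then goes through without the general disintegration theorem, and the almost-everywhere bookkeeping in your ``expected obstacle'' paragraph becomes trivial.

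The price is one dynamical input not literally contained in \cref{prop:arithmetic-veech}: uniquely ergodic directions are minimal. This is standard. Veech's theorem gives minimality for directions without saddle connections. Alternatively, in the decomposition of a direction into cylinders and minimal components, a cylinder or a second component would carry an invariant probability other than area. You should state it with such a justification, whereas the paper uses the dichotomy exactly as stated.

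One wording slip in your step~1: \(\operatorname{supp}\mu\) is the closed set whose complement is \(\mu\)-null; it is not itself \(\mu\)-null.
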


\begin{proof}
By \cref{prop:arithmetic-veech}, every direction on \(X\) is either
completely periodic or uniquely ergodic.

Suppose that \(\theta\) is uniquely ergodic and carries conditional mass.
Identifying the fixed-direction fiber of \(S^*X^\circ\) with \(X^\circ\),
regard \(\mu_\theta\) as an invariant Borel probability measure on
\(X^\circ\). By \cref{lem:cone-guard-removal}, its support consists of
complete regular trajectories and does not meet the cone set. Extending
\(\mu_\theta\) by zero across the finite set \(\mathcal C_X\) therefore
gives an invariant Borel probability measure for the directional flow on
\(X\). Unique ergodicity implies that this extension is the normalized flat
area measure. Consequently,
\[
    \mu_\theta(O)
    =
    \frac{\operatorname{Area}(O)}{\operatorname{Area}(X)}
    >0,
\]
since \(O\) is nonempty and open.
On the other hand, disintegrating \eqref{eq:dark-measure} gives
\[
    0
    =
    \mu(\pi^{-1}(O))
    =
    \int_{\mathbb S^1}
    \mu_\theta(\pi^{-1}(O))\,d\nu(\theta).
\]
Because the integrand is nonnegative, it vanishes for
\(\nu\)-almost every \(\theta\). Hence, uniquely ergodic directions carry no
\(\nu\)-mass.

It follows that \(\nu\)-almost every direction carrying conditional mass is
completely periodic. Such a direction decomposes \(X\) into finitely many
maximal periodic cylinders together with their saddle-connection boundaries.
Every saddle connection reaches the cone set at its endpoints and is
therefore excluded from \(\operatorname{supp}\mu\) by
\cref{lem:cone-guard-removal}. Thus, the conditional measure is supported in
the interiors of the maximal periodic cylinders.
\end{proof}

\section{Observability for admissible observation sets}
\label{sec:admissible-observation}

The cone-neighborhood condition has already excluded trajectories reaching
the conic singularities, as well as saddle connections forming the boundaries
of periodic cylinders. It remains to exclude invariant mass supported in the
interiors of such cylinders. The two geometric conditions in the definition
of admissibility and their respective roles are illustrated schematically in
\cref{fig:admissible-geometry}.

\begin{figure}[t]
\centering
\begin{tikzpicture}[scale=1.0,>=Latex,font=\small]
  \draw[line width=1pt,rounded corners] (-5.2,-1.7) rectangle (5.2,1.7);
  \node[above] at (0,1.82) {schematic of the regular part of the branched unfolding $X$};

  \fill[black] (-4.15,0) circle (2.2pt);
  \fill[black] (4.15,0) circle (2.2pt);
  \draw[fill=orange!18,draw=orange!70!black,line width=.8pt] (-4.15,0) circle (.52);
  \draw[fill=white,draw=none] (-4.15,0) circle (.11);
  \draw[fill=orange!18,draw=orange!70!black,line width=.8pt] (4.15,0) circle (.52);
  \draw[fill=white,draw=none] (4.15,0) circle (.11);
  \fill[black] (-4.15,0) circle (2.2pt);
  \fill[black] (4.15,0) circle (2.2pt);
  \node[below] at (-4.15,-.62) {$4\pi$ cone};
  \node[below] at (4.15,-.62) {$4\pi$ cone};
  \node[orange!70!black,align=center] at (0,-1.32) {(A1) observed punctured neighborhoods of the cone set};

  \draw[black!35,dashed] (-2.9,-.75) rectangle (2.9,.75);
  \node[black!65] at (-1.55,1.02) {maximal periodic cylinder};
  \foreach \yy in {-0.45,0,0.45} {
    \draw[->,line width=.85pt] (-2.65,\yy) -- (2.65,\yy);
  }
  \draw[fill=green!18,draw=green!55!black,line width=.8pt] (.55,-.72) rectangle (1.12,.72);
  \node[align=left,green!45!black] at (2.05,1.08) {observation strip $O$};
  \draw[->,green!45!black] (1.55,1.00)--(1.02,.72);
  \node[align=center] at (0,-2.25) {(A2) every regular closed orbit in every completely periodic cylinder meets $O$};
\end{tikzpicture}
\caption{Geometric content of the admissible observation class on the branched unfolding.  The two ingredients play different roles: the punctured cone guard removes singular/saddle-boundary trajectories, while periodic-orbit interception excludes invariant mass carried by regular periodic cylinders.  Uniquely ergodic directions require no finite hitting-time condition.  The picture is schematic rather than a polygonal model of $X$.}
\label{fig:admissible-geometry}
\end{figure}


\begin{lemma}[Invariant measures in a periodic cylinder]
\label{lem:admissible-cylinder}
Let
\[
    C\simeq(\mathbb R/L\mathbb Z)_s\times(0,a)_y
\]
be a maximal cylinder in a completely periodic direction, and let
\(O\subset X^\circ\) be open. Suppose that every closed orbit
\[
    (\mathbb R/L\mathbb Z)\times\{y\},
    \qquad y\in(0,a),
\]
meets \(O\). If \(\mu_C\neq0\) is a finite nonnegative measure on \(C\)
invariant under translation in the \(s\)-variable, then
\[
    \mu_C(O\cap C)>0.
\]
\end{lemma}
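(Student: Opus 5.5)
The plan is to disintegrate \(\mu_C\) along the transverse variable \(y\) and use translation invariance to see that each fibre measure is a multiple of arclength on the closed orbit. Let \(\pi_y:C\to(0,a)\) be the projection and \(\eta:=(\pi_y)_*\mu_C\), a finite nonzero measure on \((0,a)\). By the disintegration theorem, \(\mu_C=\int \mu_y\,d\eta(y)\), where \(\mu_y\) is a probability measure on the circle \((\mathbb R/L\mathbb Z)\times\{y\}\). Since the translations \(\tau_t(s,y)=(s+t,y)\) preserve the fibres of \(\pi_y\), invariance of \(\mu_C\) and uniqueness of the disintegration give \((\tau_t)_*\mu_y=\mu_y\) for \(\eta\)-a.e.\ \(y\). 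To handle the uncountably many \(t\), apply this first to a countable dense set of \(t\); weak continuity of \(t\mapsto(\tau_t)_*\mu_y\) then extends it to all \(t\). A translation-invariant probability measure on \(\mathbb R/L\mathbb Z\) is normalized Lebesgue measure \(ds/L\), as one sees from its Fourier coefficients. Hence \(\mu_C=\frac1L\,ds\otimes\eta\).

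Next, set \(O_y:=\{s:(s,y)\in O\cap C\}\), which is open in \(\mathbb R/L\mathbb Z\). By hypothesis each closed orbit at height \(y\in(0,a)\) meets \(O\), and that intersection lies in \(C\), so \(O_y\) is a nonempty open subset of the circle. Therefore \(f(y):=|O_y|/L>0\) for every \(y\in(0,a)\). Fubini gives
\[
 \mu_C(O\cap C)=\int_{(0,a)} f(y)\,d\eta(y).
\]
The integrand \(f\) is Borel measurable: it is in fact lower semicontinuous, because \(O\cap C\) is open and Fatou's lemma applies. It is also strictly positive everywhere on \((0,a)\), and \(\eta\) is a nonzero positive measure. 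Hence the integral is positive, which proves the claim.

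The main obstacle is the measure-theoretic step showing that the fibre measures are Lebesgue, that is, passing from invariance of \(\mu_C\) to invariance of almost every \(\mu_y\). A cleaner alternative avoids disintegration altogether by averaging. For a Borel set \(A\subset C\), invariance gives \(\mu_C(A)=\frac1L\int_0^L\mu_C(\tau_{-t}A)\,dt\). Fubini then yields \(\mu_C(A)=\int \frac{|A_y|}{L}\,d\eta(y)\), since \(\frac1L\int_0^L \mathbf 1_A(s+t,y)\,dt=|A_y|/L\). Taking \(A=O\cap C\) gives the same formula, so I would present this averaging argument as the proof.
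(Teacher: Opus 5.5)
Your proof is correct and follows the paper's argument. Both establish the product structure \(d\mu_C=\frac{ds}{L}\,d\eta(y)\), observe that each slice \(\{s:(s,y)\in O\}\) is a nonempty open subset of the circle and so has positive length, and integrate against the nonzero measure \(\eta\). The one difference is how you reach the product formula: your averaging identity \(\mu_C(A)=\frac1L\int_0^L\mu_C(\tau_{-t}A)\,dt=\int\frac{|A_y|}{L}\,d\eta(y)\) gets it directly by Tonelli instead of through disintegration, which sidesteps the ``for \(\eta\)-a.e.\ \(y\)'' issue that the paper's ``on every orbit'' glosses over, and your lower-semicontinuity remark supplies the measurability of \(y\mapsto\ell_O(y)\) that the paper leaves implicit.
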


\begin{proof}
Disintegrate \(\mu_C\) with respect to the transverse coordinate \(y\).
Translation invariance in \(s\) implies that, on every orbit carrying
conditional mass, the conditional measure is the normalized Haar measure on
\(\mathbb R/L\mathbb Z\). Hence
\[
    d\mu_C(s,y)=\frac{ds}{L}\,d\eta(y)
\]
for some finite nonnegative measure \(\eta\) on \((0,a)\).
Define
\[
    \ell_O(y)
    :=
    \mathcal L^1
    \bigl\{s\in\mathbb R/L\mathbb Z:(s,y)\in O\bigr\}.
\]
By the interception hypothesis, every orbit meets \(O\); since \(O\) is open,
this implies
\[
    \ell_O(y)>0
    \qquad\text{for every }y\in(0,a).
\]
Therefore,
\[
    \mu_C(O\cap C)
    =
    \int_{(0,a)}
    \frac{\ell_O(y)}{L}\,d\eta(y).
\]
If \(\mu_C\neq0\), then \(\eta\neq0\), and the strictly positive integrand
gives
\[
    \mu_C(O\cap C)>0.
\]
\end{proof}

\begin{theorem}[High-frequency observability for admissible sets]
\label{thm:high-frequency-admissible}
For every \(W\in\mathcal O_{\rm adm}(\T)\), there exist constants
\(h_0>0\) and \(c_{\rm HF}(W)>0\) such that every exact DDN or NND
eigenfunction \(u_h\) with \(0<h<h_0\) satisfies
\[
    \|u_h\|_{L^2(W)}^2
    \ge
    c_{\rm HF}(W)\|u_h\|_{L^2(\T)}^2.
\]
\end{theorem}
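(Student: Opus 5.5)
The proof will be by contradiction, assembling the reductions of \cref{sec:high-frequency} with \cref{lem:admissible-cylinder}. Suppose no pair \((h_0,c_{\rm HF}(W))\) works. Then there are exact DDN or NND eigenfunctions \(u_j\) and \(h_j\to0\) satisfying (7.1). After passing to a subsequence we may assume that all \(u_j\) lie in one sector \(B\). This costs nothing, because by \cref{prop:mixed-monodromy} both sectors unfold to the same surface \(X\), so the argument is identical in either case.

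Applying \cref{prop:unfolding} and normalizing gives \(\widetilde u_j\) with (7.2). The construction preceding \cref{prop:regular-propagation} then produces a semiclassical measure \(\mu\) on \(S^*X^\circ\) with two properties:
\begin{itemize}
\item total mass \(\mu(S^*X^\circ)=1\), where the cone guard (A1) ensures that no mass escapes to \(\mathcal C_X\);
\item darkness on the observation set, \(\mu(\pi^{-1}(O))=0\), as in \eqref{eq:dark-measure}.
\end{itemize}

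Next I would run the dynamical reduction. By \cref{prop:regular-propagation} and \cref{lem:cone-guard-removal}, \(\mu\) is invariant and its support consists of complete regular trajectories that stay a positive distance from the cone set. \cref{lem:directional-disintegration} writes \(\mu=\int\mu_\theta\,d\nu(\theta)\), and \cref{prop:directional-reduction} shows that \(\nu\) is carried by completely periodic directions. In each such direction, \(\mu_\theta\) lives in the interiors of the finitely many maximal cylinders \(C_{\theta,\alpha}\).

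Since \(\nu(\mathbb S^1)=1\), there is a set of directions \(\theta\) of positive \(\nu\)-measure that are completely periodic and have \(\mu_\theta\ne0\). For such a \(\theta\), some cylinder satisfies \(\mu_\theta|_{C_{\theta,\alpha}}\ne0\). Two points need checking before \cref{lem:admissible-cylinder} can be applied.

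\emph{Translation invariance.} The restriction \(\mu_\theta|_{C_{\theta,\alpha}}\) must be invariant under translation in the \(s\)-variable. Flow invariance on regular flow boxes gives this locally. Because the interior of a cylinder is a flat product carrying the directional flow as the \(s\)-translation, the local invariance globalizes. The closed orbits have finite length \(L\), so finitely many flow boxes cover each compact sub-annulus \((\mathbb R/L\mathbb Z)\times[\delta,a-\delta]\). Letting \(\delta\to0\) then gives invariance on the whole open cylinder.

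\emph{Interception.} Condition (A2) says exactly that every closed orbit \((\mathbb R/L\mathbb Z)\times\{y\}\), for \(y\in(0,a)\), meets \(O=\widetilde W\).

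With both points in hand, \cref{lem:admissible-cylinder} yields \(\mu_\theta(O\cap C_{\theta,\alpha})>0\). This holds on a set of directions of positive \(\nu\)-measure, so integrating gives \(\mu(\pi^{-1}(O))>0\), which contradicts \eqref{eq:dark-measure}.

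The step I expect to be the main obstacle is measure-theoretic bookkeeping rather than analysis. The disintegration is defined only for \(\nu\)-a.e. \(\theta\), and the completely periodic directions form a countable set. Hence \(\nu\) restricted to the relevant directions is purely atomic, and "positive \(\nu\)-measure" really means that some atom \(\theta_0\) has \(\nu(\{\theta_0\})>0\). I would therefore work directly with the atom. Restrict \(\mu\) to the closed fiber \(\vartheta^{-1}(\theta_0)\); this restriction is itself flow-invariant, since the flow preserves \(\vartheta\) and the fiber is closed. The cylinder argument can then be run on this restriction without appealing to almost-everywhere statements. It is also necessary to identify \(\mu_{\theta_0}\) with a measure on \(X^\circ\) via the direction-fixed section, as in the proof of \cref{prop:directional-reduction}. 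Once this is done, the contradiction is immediate, and \(h_0\) and \(c_{\rm HF}(W)\) exist by the standard negation of the statement.
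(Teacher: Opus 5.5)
Your proposal is correct and follows the paper's own proof. Both obtain a unit-mass dark semiclassical measure from the contradiction sequence, use the Veech reduction of \cref{prop:directional-reduction} to confine it to cylinder interiors in completely periodic directions, apply \cref{lem:admissible-cylinder} via \textnormal{(A2)}, and integrate over directions to contradict \eqref{eq:dark-measure}. Your additional points (restricting to one sector, globalizing flow invariance across each cylinder, and noting that \(\nu\) is purely atomic because completely periodic directions are countable) make explicit bookkeeping that the paper handles through its \(\nu\)-almost-everywhere statements.
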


\begin{proof}
Suppose, to the contrary, that the estimate fails, and consider the normalized
unfolded contradiction sequence constructed in
\cref{sec:high-frequency}. Its semiclassical measure \(\mu\) has total mass
one and satisfies
\[
    \mu(\pi^{-1}(O))=0,
    \qquad O=\widetilde W.
\]
By \cref{prop:directional-reduction}, for \(\nu\)-almost every direction
carrying conditional mass, the direction is completely periodic and the
conditional probability measure \(\mu_\theta\) is supported in the interiors
of finitely many maximal periodic cylinders. Since \(\mu_\theta\) has total
mass one, at least one of its cylinder restrictions is nonzero.

Condition \textnormal{(A2)} in
\cref{def:admissible-observation} states that every regular closed orbit in
each such cylinder meets \(O\). After identifying the fixed-direction phase
cylinder with its base cylinder, \cref{lem:admissible-cylinder} therefore
implies
\[
    \mu_\theta(\pi^{-1}(O))>0
\]
for \(\nu\)-almost every direction carrying conditional mass. Since
\(\nu=\vartheta_*\mu\) is a probability measure, directional disintegration
then yields
\[
    \mu(\pi^{-1}(O))
    =
    \int_{\mathbb S^1}
    \mu_\theta(\pi^{-1}(O))\,d\nu(\theta)
    >0.
\]
This contradicts \eqref{eq:dark-measure}. Hence, no normalized dark
high-frequency eigensequence exists, and the claimed estimate follows.
\end{proof}

\begin{proposition}[Structure of the admissible class]
\label{prop:admissible-class}
The class \(\mathcal O_{\rm adm}(X)\) is upward closed among open subsets of
\(X^\circ\). Moreover, let
\[
    K=K_1\cup\cdots\cup K_m\Subset X^\circ
\]
be a finite union of pairwise disjoint compact sets. Suppose that each
\(K_j\) is contained in an embedded Euclidean disk
\[
    K_j\subset B_j\Subset X^\circ,
\]
where the disks \(B_j\) are pairwise disjoint. Then,
\[
    O:=X^\circ\setminus K
\]
belongs to \(\mathcal O_{\rm adm}(X)\).
\end{proposition}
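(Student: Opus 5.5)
Upward closure is immediate from the definition: if \(O\in\mathcal O_{\rm adm}(X)\) and \(O\subset O'\subset X^\circ\) with \(O'\) open, then \(U_{\mathcal C}^\times\subset O\subset O'\) gives \textnormal{(A1)} for \(O'\) with the same \(\varepsilon_{\mathcal C}\). Moreover, every regular closed orbit that meets \(O\) also meets \(O'\), which gives \textnormal{(A2)}.

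For the complement \(O=X^\circ\setminus K\), openness is clear because \(K\) is compact, and \(O\) is nonempty because \(X^\circ\) has positive area while the disks \(B_j\) do not cover \(X\) (shrinking the disks if necessary). For \textnormal{(A1)}: since \(K\Subset X^\circ\) is compact and \(\mathcal C_X\) is finite, \(\delta:=d_X(K,\mathcal C_X)>0\). Take \(\varepsilon_{\mathcal C}=\delta/2\). Then every \(x\in X^\circ\) with \(0<d_X(x,\mathcal C_X)<\varepsilon_{\mathcal C}\) lies outside \(K\), so \(U_{\mathcal C}^\times\subset O\).

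The substantive step is \textnormal{(A2)}. I would argue by contradiction: suppose a regular closed orbit \(\gamma=(\mathbb R/L\mathbb Z)\times\{y\}\) in some maximal cylinder \(C_{\theta,\alpha}\) avoids \(O\). Then \(\gamma\subset K\), and since \(\gamma\) is connected and the \(K_j\) are pairwise disjoint compact sets, \(\gamma\subset K_j\subset B_j\) for a single \(j\). Now \(B_j\) is an embedded Euclidean disk, i.e.\ an isometric image of a flat disk \(D\subset\mathbb R^2\) via a chart that is a translation chart (the translation structure restricted to a simply connected regular set develops injectively by embeddedness). The orbit \(\gamma\) is a geodesic of direction \(\theta\) contained in \(B_j\), so it develops into \(D\) as a straight segment of length \(L\) in direction \(\theta\). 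Being closed, the developed path must return to its starting point after length \(L>0\). This is impossible, since a straight segment of positive length in \(\mathbb R^2\) has distinct endpoints and the chart is injective. Hence every regular closed orbit meets \(O\).

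The main obstacle is justifying that the developing map on \(B_j\) is an injective translation chart, so that the closed geodesic really becomes a non-closed straight segment. I would handle this by taking ``embedded Euclidean disk'' to mean an isometric embedding \(\iota:D\to X^\circ\) of a round flat disk. Since \(X^\circ\) is a translation surface, \(\iota\) is locally a translation after a fixed rotation, and it is globally an affine isometry because \(D\) is connected. Pulling back \(\gamma\) by \(\iota^{-1}\) then gives a closed curve in \(D\) with constant unit velocity in a fixed direction, which contradicts linearity. Everything else is bookkeeping.
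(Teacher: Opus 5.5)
Your proposal is correct and follows the paper's proof essentially step by step. Upward closure comes directly from \textnormal{(A1)}--\textnormal{(A2)}, the cone guard from $d_X(K,\mathcal C_X)>0$, and \textnormal{(A2)} by contradiction: connectedness forces a dark closed orbit into a single $K_j\subset B_j$, and developing that disk into $\mathbb R^2$ would produce a closed straight segment. One cosmetic point: nonemptiness of $O$ follows at once from $U_{\mathcal C}^\times\subset O$, so the remark about shrinking the disks is unnecessary (and not permitted, since the $B_j\supset K_j$ are given).
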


\begin{proof}
Upward closure follows immediately from conditions
\textnormal{(A1)} and \textnormal{(A2)}.
Since \(K\Subset X^\circ\) and the cone set \(\mathcal C_X\) is finite,
\[
    d_X(K,\mathcal C_X)>0.
\]
Hence, \(O=X^\circ\setminus K\) contains a fixed punctured neighborhood of
\(\mathcal C_X\), so condition \textnormal{(A1)} holds.
Suppose, for contradiction, that a regular closed geodesic \(\gamma\) is
disjoint from \(O\). Then,
\[
    \gamma(\mathbb S^1)\subset K_1\cup\cdots\cup K_m.
\]
Since \(\gamma(\mathbb S^1)\) is connected and the finitely many compact sets
\(K_j\) are pairwise disjoint, its image must be contained in a single
\(K_j\). Consequently,
\[
    \gamma(\mathbb S^1)\subset B_j
\]
for some embedded Euclidean disk \(B_j\). Developing \(B_j\) isometrically
into \(\mathbb R^2\) sends \(\gamma\) to a nonconstant closed straight
geodesic segment in the Euclidean plane, which is impossible. Therefore,
every regular closed orbit meets \(O\), and condition
\textnormal{(A2)} follows.
\end{proof}

\begin{corollary}[A concrete admissible class on the triangle]
\label{cor:triangle-admissible-subclass}
Let \(K_T\Subset\T\) be compact. Suppose that the finitely many distinct
lifts of \(K_T\) to \(X^\circ\) are pairwise disjoint and that each is
contained in one of a family of pairwise disjoint embedded Euclidean disks.
Then,
\[
    W:=\T\setminus K_T
\]
belongs to \(\mathcal O_{\rm adm}(\T)\) and therefore satisfies the uniform
DDN/NND observability estimate of \cref{thm:mixed-uniform}.
\end{corollary}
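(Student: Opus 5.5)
The plan is to check that \(\widetilde W\in\mathcal O_{\rm adm}(X)\) and then invoke \cref{thm:mixed-uniform}. The natural route is \cref{prop:admissible-class}, applied to the compact set \(K:=F^{-1}(K_T)\cap X^\circ\). The one subtlety is that \(\widetilde W=F^{-1}(\T\setminus K_T)\cap X^\circ\) must literally equal \(X^\circ\setminus K\). This holds provided \(F\) maps \(X^\circ\) into \(\T\); that is true because \(F:X\to\T\) is the folding map and \(W\) is understood as relatively open in \(\T\). Thus \(\widetilde W=X^\circ\setminus F^{-1}(K_T)\), and the hypothesis that \(W\) is nonempty and open is immediate whenever \(K_T\neq\T\).

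First, compactness of \(K\) in \(X^\circ\). Since \(F\) is continuous and \(X\) is compact, \(F^{-1}(K_T)\) is compact in \(X\). The hypothesis \(K_T\Subset\T\) should be read as saying that \(K_T\) stays away from the vertices of \(\T\), or at least from the mixed vertices whose preimages are \(\mathcal C_X\). Under that reading, \(F^{-1}(K_T)\) does not meet \(\mathcal C_X\), so \(K\) is a compact subset of \(X^\circ\).

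Next, the decomposition of \(K\). The lifts \(\iota_\alpha(K_T)\) are indexed by the finitely many (\(N_{\rm sh}\)) triangle copies, and \(K\) is their union. Call the distinct sets among them \(K_1,\dots,K_m\). By hypothesis these are pairwise disjoint and compact, and each \(K_j\) lies in an embedded Euclidean disk \(B_j\Subset X^\circ\), with the \(B_j\) pairwise disjoint. This is exactly the setup of \cref{prop:admissible-class}. That proposition gives \(X^\circ\setminus K\in\mathcal O_{\rm adm}(X)\), hence \(\widetilde W\in\mathcal O_{\rm adm}(X)\), and so \(W\in\mathcal O_{\rm adm}(\T)\) by the definition of \(\mathcal O_{\rm adm}(\T)\). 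Then \cref{thm:mixed-uniform} yields the uniform DDN/NND estimate.

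The main obstacle is bookkeeping rather than analysis, and it has two parts. First, one must make precise that "the distinct lifts" cover all of \(F^{-1}(K_T)\), including points of \(K_T\) on the boundary of \(\T\). Such points lie on seams, where several copies \(\iota_\alpha(K_T)\) overlap. The hypothesis that the distinct lifts are pairwise disjoint must then be read setwise after merging copies that share seam points; I would handle this by defining \(K_j\) as the connected groupings of these copies. Second, one must confirm that \(K_T\) avoids the mixed vertices, so that \(K\) avoids the cone set and condition (A1) holds. If \(K_T\) is allowed to touch a cone preimage, the conclusion fails, so I would state this as the interpretation of \(K_T\Subset\T\).
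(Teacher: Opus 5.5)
Your proposal is correct and follows the paper's proof. You identify \(X^\circ\setminus\widetilde W=F^{-1}(K_T)\cap X^\circ\) with the finite union of the distinct lifts of \(K_T\), apply \cref{prop:admissible-class} to obtain \(\widetilde W\in\mathcal O_{\rm adm}(X)\), and conclude with \cref{thm:mixed-uniform}; your extra bookkeeping (compactness in \(X^\circ\), avoidance of the mixed vertices, seam overlaps) is left implicit in the paper, where the requirement \(B_j\Subset X^\circ\) and the pairwise disjointness of the lifts already enforce it, so no regrouping of lifts is needed.
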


\begin{proof}
By definition of the full lift,
\[
    X^\circ\setminus\widetilde W
    =
    F^{-1}(K_T)\cap X^\circ,
\]
where \(F:X\to\T\) is the folding map. This set is precisely the finite union
of the distinct lifted copies of \(K_T\). The hypotheses therefore place
\(X^\circ\setminus\widetilde W\) in the class covered by
\cref{prop:admissible-class}. Hence,
\(\widetilde W\in\mathcal O_{\rm adm}(X)\), and thus
\(W\in\mathcal O_{\rm adm}(\T)\).
The conclusion follows from \cref{thm:mixed-uniform}.
\end{proof}

\begin{proof}[Proof of \cref{cor:small-hole-admissible}]
The unfolding involves only finitely many triangle-copy embeddings. After
discarding redundant copies, let
\(\iota_\alpha,\iota_\beta\) be two distinct such embeddings. Their
coincidence set
\[
    Z_{\alpha\beta}
    :=
    \{x\in\T^\circ:
      \iota_\alpha(x)=\iota_\beta(x)\}
\]
has empty interior: if the two local Euclidean isometries agreed on a
nonempty open subset, they would agree identically on their common domain,
contrary to their being distinct.

Each \(Z_{\alpha\beta}\) is closed in \(\T^\circ\). Hence, the finite union
\[
    Z:=\bigcup_{\alpha\neq\beta}Z_{\alpha\beta}
\]
is closed with empty interior, and \(\T^\circ\setminus Z\) is dense and open.
For every \(x_0\in\T^\circ\setminus Z\), the finitely many lifts of \(x_0\)
are distinct, have positive mutual separation, and lie a positive distance
from \(\mathcal C_X\). It follows that, for all sufficiently small \(r>0\),
the distinct lifts of
\[
    \overline{B_\T(x_0,r)}
\]
are contained in pairwise disjoint embedded Euclidean disks. Applying
\cref{cor:triangle-admissible-subclass} gives the small-hole conclusion, and
\cref{thm:mixed-uniform} gives the corresponding observability estimate.

The finite-hole statement follows by the same argument applied simultaneously
to the finite collection of all lifted holes, provided \(r\) is chosen small
enough that the resulting lifted compact sets remain mutually separated and
away from the cone set.
\end{proof}

\begin{remark}[Breadth of the admissible class]
\label{rem:admissible-breadth}
The preceding proposition gives a concrete nonempty family of admissible
observation sets: the lifted unobserved region may be any finite union of
separated compact subsets contained in regular Euclidean disks. Since
\(\mathcal O_{\rm adm}(X)\) is upward closed, every enlargement of such an
observation set remains admissible. More generally, admissibility allows
observation geometries that contain a punctured neighborhood of the cone set
and intersect every regular closed orbit in each completely periodic
direction; no corresponding hitting condition is imposed in uniquely
ergodic directions.
\end{remark}

\section{From high frequency to the full spectrum}
\label{sec:globalization}

\begin{proposition}[Low-frequency closure]
\label{prop:globalization}
Fix \(B\in\{DDN,NND\}\) and a nonempty open set \(W\subset\T\).
Suppose that there exist \(\lambda_0>0\) and \(c_{\rm HF}>0\) such that
every \(B\)-eigenfunction \(u\) with eigenvalue \(\lambda\ge\lambda_0\)
satisfies
\[
    \|u\|_{L^2(W)}^2
    \ge
    c_{\rm HF}\|u\|_{L^2(\T)}^2.
\]
Then, there exists \(c_W>0\) such that every \(B\)-eigenfunction \(u\)
satisfies
\[
    \|u\|_{L^2(W)}^2
    \ge
    c_W\|u\|_{L^2(\T)}^2.
\]
\end{proposition}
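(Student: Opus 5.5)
The plan is to combine the assumed high-frequency bound with a compactness argument on the finitely many low eigenspaces, using unique continuation to make each low-frequency constant positive.

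\emph{Step 1: finitely many low eigenspaces.} The operator \(H_B\) is the self-adjoint operator of a closed semibounded form on the bounded Lipschitz domain \(\T\). Its form domain lies in \(H^1(\T)\), which embeds compactly into \(L^2(\T)\) by Rellich. Hence \(H_B\) has compact resolvent and discrete spectrum. Only finitely many eigenvalues \(\lambda_1,\dots,\lambda_m\) lie below \(\lambda_0\), each with a finite-dimensional eigenspace \(E_{\lambda_k}^B\).

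\emph{Step 2: a positive constant on each low eigenspace.} Fix \(k\) and minimize \(\mathcal O_W(u)\) over the unit sphere of \(E_{\lambda_k}^B\). This sphere is compact, and \(u\mapsto\|u\|_{L^2(W)}^2\) is continuous, so the infimum \(c_k\) is attained at some \(u_*\). If \(c_k=0\), then \(u_*\) vanishes on the nonempty open set \(W\). Since \(u_*\) solves \((\Delta+\lambda_k)u_*=0\), it is real-analytic in the connected interior \(\T^\circ\). Unique continuation (analyticity suffices) then forces \(u_*\equiv0\), contradicting \(\|u_*\|_{L^2(\T)}=1\). Therefore \(c_k>0\).

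\emph{Step 3: combining.} Set \(c_W:=\min\{c_{\rm HF},c_1,\dots,c_m\}>0\). Every \(B\)-eigenfunction lies in a single eigenspace. Eigenvalues \(\lambda\ge\lambda_0\) are covered by the hypothesis, and eigenvalues \(\lambda<\lambda_0\) are covered by Step 2. The estimate is homogeneous of degree two, so the normalization is harmless.

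Step 2 is the only step with real content, namely the unique continuation. It must be applied in the interior, where no boundary regularity is needed. If \(W\) meets \(\T^\circ\) only partially, the vanishing on \(W\cap\T^\circ\) is already enough, since \(W\) is open in \(\T\) and therefore meets \(\T^\circ\) in a nonempty open set. One caveat: this low-frequency argument needs nothing about the mixed corners, because it uses only interior analyticity and the discreteness of the spectrum.
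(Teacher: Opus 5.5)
Your proof is correct and follows the paper's argument: compact resolvent leaves finitely many eigenspaces below \(\lambda_0\), and on each one compactness of the finite-dimensional unit sphere plus interior unique continuation gives a positive minimum. Taking \(c_W\) as the minimum of these constants and \(c_{\rm HF}\) finishes it, and your added justifications (Rellich for the compact resolvent, real-analyticity for unique continuation) are just more explicit versions of the paper's steps.
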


\begin{proof}
The mixed Laplacian on the bounded triangle \(\T\) has compact resolvent, so
only finitely many eigenvalues lie below \(\lambda_0\). Fix one such
eigenvalue \(\lambda\). The unit sphere of the finite-dimensional eigenspace
\(E_\lambda^B\) is compact, and the map
\[
    u\longmapsto \|u\|_{L^2(W)}^2
\]
is continuous. Hence, it attains its minimum on that sphere.
This minimum is strictly positive. Indeed, if it were zero, there would exist
\(0\neq u\in E_\lambda^B\) with
\[
    \|u\|_{L^2(W)}=0.
\]
Thus, \(u=0\) almost everywhere on \(W\). Since \(u\) solves
\[
    (-\Delta-\lambda)u=0
\]
in the interior of \(\T\), interior elliptic regularity gives a smooth
representative there, and hence \(u\) vanishes on the nonempty open set
\(W\). Interior unique continuation then implies
\[
    u\equiv0
    \qquad\text{on }\T,
\]
contradicting \(\|u\|_{L^2(\T)}=1\).
Therefore, every eigenspace below \(\lambda_0\) has a strictly positive
observation minimum. Taking the minimum of these finitely many constants and
\(c_{\rm HF}\) yields a constant \(c_W>0\) valid over the entire
\(B\)-spectrum.
\end{proof}

\begin{proof}[Proof of \cref{thm:mixed-uniform}]
Let \(W\in\mathcal O_{\rm adm}(\T)\) and fix
\(B\in\{DDN,NND\}\). By
\cref{thm:high-frequency-admissible}, the required observation estimate holds
uniformly for all sufficiently high-frequency \(B\)-eigenfunctions.
Applying \cref{prop:globalization} extends the estimate to the full
\(B\)-spectrum. Since the argument applies to both \(B=DDN\) and \(B=NND\),
the theorem follows.
\end{proof}

\section{Recombining the parity sectors}
\label{sec:parity-recombination}

\begin{proof}[Proof of \cref{thm:reflection-overlap}]
Let
\[
    V=\operatorname{int}(W\cap\sigma(W)).
\]
By hypothesis, \(V\) is nonempty, open, and invariant under the reflection
\(\sigma\). Decompose the rhombus eigenfunction orthogonally into its even and
odd parts,
\[
    u=u_++u_-,
    \qquad
    u_\pm\circ\sigma=\pm u_\pm.
\]
Upon restriction to one half-triangle \(\T\), these components correspond to
the DDD and DDN sectors in the Dirichlet case, and to the NNN and NND sectors
in the Neumann case.

For the pure DDD/NNN sector, the equilateral-triangle observability theorem
of Alphonse--Lafontaine gives a frequency-uniform stationary observation
estimate on every nonempty open subset
\cite{AlphonseLafontaine2025}. For the mixed sector, the assumption
\[
    V\cap\T\in\mathcal O_{\rm adm}(\T)
\]
allows us to apply \cref{thm:mixed-uniform}. Hence, there exist constants
\(c_+,c_->0\), independent of the eigenvalue, such that
\[
    \|u_+\|_{L^2(V\cap\T)}^2
    \ge c_+\|u_+\|_{L^2(\T)}^2,
    \qquad
    \|u_-\|_{L^2(V\cap\T)}^2
    \ge c_-\|u_-\|_{L^2(\T)}^2,
\]
with the assignment of the pure and mixed constants determined by the
Dirichlet or Neumann case. Since \(V\) is reflection invariant and \(u_+\) and \(u_-\) have opposite
parity,
\[
    \int_V u_+\overline{u_-}
    =
    \int_V
    (u_+\circ\sigma)\,
    \overline{(u_-\circ\sigma)}
    =
    -\int_V u_+\overline{u_-}.
\]
Therefore,
\[
    \int_V u_+\overline{u_-}=0
\]
and
\[
    \|u\|_{L^2(V)}^2
    =
    \|u_+\|_{L^2(V)}^2
    +
    \|u_-\|_{L^2(V)}^2.
\]
Reflection symmetry also gives
\[
    \|u_\pm\|_{L^2(V)}^2
    =
    2\|u_\pm\|_{L^2(V\cap\T)}^2,
    \qquad
    \|u_\pm\|_{L^2(\Rh)}^2
    =
    2\|u_\pm\|_{L^2(\T)}^2.
\]
Consequently, with \(c=\min\{c_+,c_-\}>0\),
\[
\begin{aligned}
    \|u\|_{L^2(V)}^2
    &=
    \|u_+\|_{L^2(V)}^2
    +
    \|u_-\|_{L^2(V)}^2
    \\
    &\ge
    c\bigl(
        \|u_+\|_{L^2(\Rh)}^2
        +
        \|u_-\|_{L^2(\Rh)}^2
      \bigr).
\end{aligned}
\]
The parity decomposition is orthogonal on \(\Rh\), so
\[
    \|u_+\|_{L^2(\Rh)}^2
    +
    \|u_-\|_{L^2(\Rh)}^2
    =
    \|u\|_{L^2(\Rh)}^2.
\]
Thus,
\[
    \|u\|_{L^2(V)}^2
    \ge
    c\|u\|_{L^2(\Rh)}^2.
\]
Since \(V\subset W\),
\[
    \|u\|_{L^2(W)}^2
    \ge
    \|u\|_{L^2(V)}^2,
\]
and the theorem follows.
\end{proof}

\begin{proof}[Proof of \cref{prop:fixed-angle}]
Fix
\[
    (B_+,B_-)\in\{(DDN,DDD),(NNN,NND)\}
\]
Let \(\lambda\) be an eigenvalue common to the two sectors. The corresponding
eigenspaces are finite dimensional. By interior unique continuation,
restriction to the nonempty open set \(W\) is injective on each eigenspace.
Thus,
\[
    u\longmapsto\|u\|_{L^2(W)}
\]
defines a norm on each of them. Since all norms on a finite-dimensional
vector space are equivalent, the corresponding unit spheres are compact.
It follows that the quotient defining
\(\rho_W^{B_+,B_-}(\lambda)\) attains its supremum.

Suppose, for contradiction, that
\[
    \rho_W^{B_+,B_-}(\lambda)=1.
\]
Equality in the Cauchy--Schwarz inequality then yields nonzero eigenfunctions
\(u_+\) and \(u_-\), together with a scalar \(c\neq0\), such that
\[
    u_+=c\,u_-
    \qquad\text{almost everywhere on }W.
\]
The difference
\[
    v=u_+-c\,u_-
\]
satisfies
\[
    (-\Delta-\lambda)v=0
\]
in the interior of \(\T\) and vanishes on the nonempty open set \(W\).
Interior unique continuation therefore gives
\[
    v\equiv0
    \qquad\text{on }\T.
\]
Hence,
\[
    u_+=c\,u_-
    \qquad\text{throughout }\T.
\]
The two sectors \(B_+\) and \(B_-\) impose opposite boundary conditions on
the symmetry side \(\M\): one imposes Dirichlet data and the other Neumann
data. Thus the common nonzero solution obtained above satisfies, on the
interior of \(\M\),
\[
    u|_{\M}=0,
    \qquad
    \partial_\nu u|_{\M}=0.
\]
The Cauchy uniqueness theorem for the constant-coefficient Helmholtz equation
across the straight segment \(\M\) then implies
\[
    u\equiv0
    \qquad\text{on }\T,
\]
contradicting the choice of \(u_+\) and \(u_-\). Therefore,
\[
    \rho_W^{B_+,B_-}(\lambda)<1.
\]
\end{proof}

\section{The arbitrary-open problem and the saddle-network obstruction}
\label{sec:endpoint}

The unconditional results above identify and eliminate the regular-flow
localization mechanisms for admissible observation sets. For a completely
arbitrary nonempty open set, the same analysis isolates one additional
mechanism rather than producing a second unconditional observability theorem.
Namely, concentration near the conic points is no longer excluded by the
geometric hypothesis used in \cref{sec:high-frequency}.

The regular component of a high-frequency sequence with vanishing observed
mass can nevertheless be reduced to completely periodic cylinders, and a
fixed-collar estimate transfers the remaining cylinder mass toward their
saddle boundaries. Thus any residual obstruction is localized to the
saddle/conic network. Extending the main theorem to arbitrary open sets is
therefore reduced to a uniform exact-spectral transfer estimate from this
network to an observed regular region, or to a regular component already
controlled by the observation estimate. The precise reduction is given in
\cref{app:endpoint-reduction}. The criterion below records a sufficient
quantitative closure condition; it is not assumed anywhere in the
unconditional results of the paper.

\begin{definition}[Quantitative saddle-network exclusion]
\label{def:qsn-exclusion}
We say that the mixed-sector problem satisfies \emph{quantitative
saddle-network exclusion} if there exist constants
\[
    \delta>0
    \qquad\text{and}\qquad
    N\in\mathbb N
\]
with the following property. Consider any normalized exact mixed-sector
high-frequency sequence whose observed mass tends to zero, and any nonzero
residual microlocal component that, after the periodic-cylinder reduction,
reaches a saddle/conic network. Within at most \(N\) successive geometric or
conic returns, at least a fraction \(\delta\) of the normalized microlocal mass
or flux of that component is transferred either to an observed regular tube
or to a regular component already controlled by the observation estimate.
The constants \(\delta\) and \(N\) are uniform along the exact eigensequence.
\end{definition}

\begin{remark}[Role of the saddle-network hypothesis]
\label{rem:qsn-nature}
The preceding definition supplies the quantitative input needed after the
regular directional and periodic-cylinder reductions have been exhausted.
Those reductions localize any remaining obstruction to the saddle/conic
network, while quantitative saddle-network exclusion prevents a nonzero
residual component from remaining indefinitely confined to that network.
Thus the hypothesis concerns only the residual mechanism that arises when the
observation set does not contain a fixed punctured neighborhood of the cone
set.
\end{remark}

\begin{proposition}[A sufficient closure criterion for arbitrary-open observability]
\label{prop:mixed-uniform-arbitrary}
Assume quantitative saddle-network exclusion. Then, for every nonempty open
set \(W\subset\T\) and every \(B\in\{DDN,NND\}\), there exists a constant
\(c_W>0\) such that
\[
    \|u\|_{L^2(W)}^2
    \ge
    c_W\|u\|_{L^2(\T)}^2
\]
for every \(B\)-eigenfunction \(u\), with \(c_W\) independent of the
eigenvalue.
\end{proposition}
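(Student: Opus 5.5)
The proof will follow the contradiction scheme of \cref{sec:high-frequency} and \cref{sec:admissible-observation}, with (A1) and (A2) replaced by the quantitative saddle-network hypothesis. By \cref{prop:globalization}, it suffices to prove a high-frequency estimate for a fixed nonempty open \(W\). So suppose there are normalized exact \(B\)-eigenfunctions \(u_j\) with \(h_j\to0\) and \(\|u_j\|_{L^2(W)}\to0\). Unfold them via \cref{prop:unfolding} to obtain \(\widetilde u_j\) on \(X\) with \(\|\widetilde u_j\|_{L^2(X)}=1\) and \(\|\widetilde u_j\|_{L^2(O)}\to0\), where \(O=\widetilde W\). As before, extract a semiclassical measure \(\mu\) on \(S^*X^\circ\) satisfying \(\mu(\pi^{-1}(O))=0\). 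This time, however, we no longer know that \(\mu(S^*X^\circ)=1\): a defect \(1-\mu(S^*X^\circ)=:m_{\mathcal C}\ge0\) may concentrate at the cone set.

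First I would treat the regular part, where \cref{prop:regular-propagation} and \cref{lem:directional-disintegration} still apply. Without (A1), trajectories hitting cones are not removed a priori, so the disintegration is only valid on complete regular trajectories. For trajectories that do reach a cone point in finite time, the measure is invariant only up to that time. The uniquely ergodic argument of \cref{prop:directional-reduction} still applies to any conditional measure carried by complete regular trajectories in a uniquely ergodic direction: such a measure would be area measure and would charge \(O\). Hence the regular invariant mass lives in completely periodic directions. Within each maximal cylinder, \cref{lem:admissible-cylinder}, applied orbit by orbit, leaves only closed orbits that miss \(O\). By the fixed-collar transfer mentioned in \cref{sec:endpoint} (\cref{app:endpoint-reduction}), such mass is pushed toward the saddle-connection boundaries. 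So every nonzero residual component of the dark sequence either is already charged on \(O\), a contradiction, or reaches the saddle/conic network.

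Next I would invoke \cref{def:qsn-exclusion} on this residual component. Within \(N\) returns, a fraction \(\delta\) of its normalized microlocal mass or flux reaches an observed regular tube, or a regular component already controlled. In the second case, iterate: controlled regular components carry a fixed positive fraction of their mass into \(O\) by the preceding step. In either case, using the uniformity of \(\delta\) and \(N\) along the sequence together with finite-time propagation through at most \(N\) flow boxes, one obtains
\[
\liminf_{j\to\infty}\|\widetilde u_j\|_{L^2(O)}^2\ge c(\delta,N,O)\,m>0,
\]
where \(m>0\) is the mass of the residual component. This contradicts darkness. Since the total mass is one, some component (regular, or at the network) is nonzero, so we obtain the high-frequency bound. \cref{prop:globalization} then gives the full-spectrum constant, and undoing the \(N_{\rm sh}\) normalization returns the estimate to \(\T\).

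The main obstacle is this last bookkeeping step. The measure \(\mu\) is not defined at the cones, and the transfer is phrased in terms of the microlocal quantities \(M_{\mathcal Y,h}\). One must convert ``fraction \(\delta\) of flux reaches an observed tube'' into a lower bound on \(\|\widetilde u_j\|_{L^2(O)}\) that is uniform in \(j\). I would first try to work at fixed \(h_j\) with the cutoffs \(\chi_{\mathcal Y}\): transport the mass of \(\Op_h(\chi_{\mathcal Y})\widetilde u_j\) along at most \(N\) regular flow segments by Egorov's theorem up to \(o(1)\) errors, and use that an observed tube is microlocally contained in \(\pi^{-1}(O)\). This yields the inequality
\[
M_{\text{tube},h_j}(\widetilde u_j)\ge \delta\,M_{\mathcal Y,h_j}(\widetilde u_j)-o(1),
\]
which contradicts \(\|\widetilde u_j\|_{L^2(O)}\to0\) once \(M_{\mathcal Y,h_j}\) is shown to stay bounded below.
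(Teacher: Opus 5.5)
Your outline is essentially the paper's proof. The paper argues by contradiction, uses \cref{prop:directional-reduction-arbitrary} together with the fixed-collar estimate of \cref{prop:periodic-cylinder-control} to send every component that survives the regular-flow analysis to the saddle/conic network, invokes quantitative saddle-network exclusion to force observed mass bounded away from zero, and closes with \cref{prop:globalization}. The paper does not derive the bookkeeping you single out as the main obstacle; it reads it off directly from the hypothesis, which is stated for the normalized mass \emph{or flux} of the residual component. Your Egorov route would not work for this step: for the part of the residual mass that concentrates at the cone set, \(M_{\mathcal Y,h_j}\) tends to zero for every fixed cutoff \(\chi_{\mathcal Y}\in C_c^\infty(T^*X^\circ)\), and regular Egorov propagation cannot carry mass through the conic returns allowed in \cref{def:qsn-exclusion}.
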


\begin{proof}
Suppose that the high-frequency estimate fails. Then, there exists a normalized
exact mixed-sector eigensequence whose mass on \(W\) tends to zero. By the
arbitrary-open directional reduction in
\cref{prop:directional-reduction-arbitrary} and the fixed-collar
periodic-cylinder estimate in
\cref{prop:periodic-cylinder-control}, every nontrivial component that survives
the regular-flow analysis is transferred to the associated saddle/conic
network.

Quantitative saddle-network exclusion provides constants \(\delta>0\) and
\(N<\infty\), uniform along the eigensequence, such that within at most \(N\)
returns a fixed positive fraction of each nonzero residual component reaches
either an observed regular tube or a regular component already controlled by
the observation estimate. In the first case, the observed mass is bounded
away from zero; in the second, the established regular-region estimate yields
the same conclusion. Both alternatives contradict the assumed vanishing of
the observed mass. Hence, no such high-frequency eigensequence exists.

The high-frequency estimate extends to the full spectrum by
\cref{prop:globalization}.
\end{proof}

\begin{remark}[Conditional full-rhombus consequence under finite spectral coincidence]
\label{rem:finite-coincidence}
Assume quantitative saddle-network exclusion. Consider either the Dirichlet
rhombus problem, with triangle parity pair \((DDN,DDD)\), or the Neumann
rhombus problem, with triangle parity pair \((NNN,NND)\). If the two sectors
in the corresponding pair have only finitely many common eigenvalues, then
the full-rhombus Laplacian satisfies frequency-uniform observability on every
nonempty open set \(W\subset\Rh\).
\end{remark}

\begin{proof}[Justification]
Let \(W\subset\Rh\) be nonempty and open. Choose a half-triangle \(\T\) such
that
\[
    W_T:=W\cap\T
\]
is nonempty. For the mixed member of the corresponding parity pair,
\cref{prop:mixed-uniform-arbitrary} gives a frequency-uniform observation
estimate on \(W_T\). For the pure member, the equilateral-triangle
observability theorem of Alphonse--Lafontaine provides the corresponding
estimate \cite{AlphonseLafontaine2025}.

At an eigenvalue belonging to only one parity sector, the sectorwise estimate,
together with the unitary restriction map of
\cref{prop:parity-decomposition}, gives the required full-rhombus bound.
Suppose now that \(\lambda\) is a common eigenvalue and write
\(u=u_++u_-\) according to the corresponding parity decomposition. By
\cref{prop:fixed-angle},
\[
    \rho_{W_T}^{B_+,B_-}(\lambda)<1,
\]
and therefore
\[
\begin{aligned}
    \|u_++u_-\|_{L^2(W_T)}^2
    &\ge
    \bigl(1-\rho_{W_T}^{B_+,B_-}(\lambda)\bigr)
    \left(
        \|u_+\|_{L^2(W_T)}^2
        +
        \|u_-\|_{L^2(W_T)}^2
    \right).
\end{aligned}
\]
Applying the two sectorwise observation estimates and then the unitary
restriction maps controls the corresponding global parity norms.

Let \(\Lambda_{\rm com}\) denote the set of common eigenvalues. If
\(\Lambda_{\rm com}\neq\varnothing\), the finiteness assumption gives
\[
    \delta_W
    :=
    \min_{\lambda\in\Lambda_{\rm com}}
    \left(
        1-\rho_{W_T}^{B_+,B_-}(\lambda)
    \right)
    >0.
\]
Combining this angle gap with the two sectorwise observation constants yields
\[
    \|u\|_{L^2(W_T)}^2
    \ge
    c_W\|u\|_{L^2(\Rh)}^2
\]
with \(c_W>0\) independent of the eigenvalue. If
\(\Lambda_{\rm com}=\varnothing\), the same conclusion follows directly from
the sectorwise estimates. Since \(W_T\subset W\), the estimate also holds on
\(W\).
\end{proof}
\begin{remark}[Open endpoint]
\label{rem:open-endpoint}
The quantitative saddle-network exclusion hypothesis is not established in
the present work. Removing this hypothesis requires uniform exact-spectral
control of residual mass propagating through the saddle/conic network,
including along sequences for which the relevant directions and microlocal
networks vary with the eigenvalue. Such a result would remove the remaining
restriction in the arbitrary-open argument and extend the mixed-sector
observability theorem from the admissible class to every nonempty open
observation set.
\end{remark}


\section{Discussion}
\label{sec:discussion}

The main phenomenon isolated in this work is that mixed reflection monodromy
can change the mechanism of eigenfunction observability. When the boundary
signs form a scalar reflection character, unfolding may place the problem in
a toric setting. For the DDN/NND sectors considered here, that closure fails.
The correct scalar phase space is instead a branched genus-two translation
surface, and frequency-uniform non-localization is governed by the
directional dynamics of this higher-genus surface.

The proof reveals two distinct mechanisms by which high-frequency mass could
avoid an observation region. One is concentration approaching the
saddle/conic network; the other is invariant concentration inside periodic
cylinders. The admissibility condition has precisely these two components.
The punctured-cone guard controls the first mechanism, while periodic-orbit
interception controls the second. In uniquely ergodic directions no
additional geometric interception is required: invariance and the Veech
dichotomy already preclude a dark component.

This separation is useful beyond the particular proof because it identifies
which part of the argument is dynamical and which part remains genuinely
exact-spectral. The regular-flow problem is resolved by semiclassical
propagation and arithmetic directional dynamics. For arbitrary open
observations, the unresolved endpoint is concentrated entirely at the
saddle/conic network, where one needs a quantitative transfer estimate
uniform over exact eigensequences whose microlocal scales and relevant
directions may vary with frequency. Section~\ref{sec:endpoint} formulates a
sufficient quantitative closure criterion, but that criterion is not
established in the present work and is not part of the unconditional theorem.

The resulting admissible class is nevertheless nonempty, stable under
enlargement, and contains explicit families of observation sets. In
particular, \cref{cor:small-hole-admissible} shows that complements of
sufficiently small regular interior holes provide concrete examples. The
same mixed-sector estimate also contributes to observability on the original
rhombus. For reflection-overlapping observation sets, parity recombination
combines the mixed-sector theorem with observability in the pure
equilateral-triangle sectors, while reflection symmetry removes cross-parity
cancellation on the overlap region.

Two natural extensions emerge. The first is to establish exact-spectral
control of the saddle/conic network and thereby remove the punctured-cone
condition from the mixed-sector theorem. The second is to control
cross-parity cancellation uniformly in frequency for general observation
sets on the rhombus, beyond the reflection-overlap geometry treated here.
These are logically distinct from the present theorem and identify the
remaining steps toward frequency-uniform observability from arbitrary
nonempty open subsets of the full equilateral rhombus.


\appendix

\section{Mixed-corner local model}
\label{app:conic}
Consider first a D/N corner of opening \(\pi/3\).  Separation of variables
\(u(r,\varphi)=R(r)\Phi(\varphi)\) gives
\[
 -\Phi''=\nu^2\Phi,
 \qquad \Phi(0)=0,
 \qquad \Phi'(\pi/3)=0.
\]
Thus, \(\Phi(\varphi)=\sin(\nu\varphi)\) and
\(\cos(\nu\pi/3)=0\), so
\[
 \nu_k=3k+\frac32,\qquad k=0,1,2,\ldots.
\tag{A.1}\label{eq:mixed-angular-spectrum}
\]

\begin{lemma}[DDN/NND local equivalence]
\label{lem:dn-nd-equivalence}
The D/N and N/D wedge Laplacians are unitarily equivalent.  In particular they
have the same angular spectrum \eqref{eq:mixed-angular-spectrum}, the same Friedrichs radial orders,
and the same radial Green-current normalization.
\end{lemma}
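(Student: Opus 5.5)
The most direct route is to conjugate by the reflection of the wedge across its angle bisector. Let the wedge be \(\{0<\varphi<\pi/3\}\). The D/N problem imposes Dirichlet data on \(\varphi=0\) and Neumann data on \(\varphi=\pi/3\); the N/D problem imposes the reverse. Let \(\tau(r,\varphi)=(r,\pi/3-\varphi)\) be the reflection across the bisector. It is a Euclidean isometry of the wedge and interchanges its two edges. We define \(U f=f\circ\tau\). This map is unitary on \(L^2\) of the wedge and commutes with the Laplacian, since \(\tau\) is an isometry. It carries the D/N form domain (\(H^1\) functions with vanishing trace on \(\varphi=0\)) onto the N/D form domain (vanishing trace on \(\varphi=\pi/3\)) and preserves the Dirichlet energy. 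By the representation theorem for closed forms, it therefore intertwines the associated self-adjoint operators. If the paper's local models use a cutoff or a disk of fixed radius, \(\tau\) preserves the radial variable, so the same argument applies to the truncated wedge with whatever outer condition is imposed at \(r=r_0\).

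The angular and radial statements then follow from this intertwining. Under the map \(\Phi(\varphi)\mapsto\Phi(\pi/3-\varphi)\), the D/N angular problem \(-\Phi''=\nu^2\Phi\), \(\Phi(0)=0\), \(\Phi'(\pi/3)=0\) becomes the N/D problem, since the derivative changes sign and the vanishing condition is unaffected. The eigenfunctions \(\sin(\nu\varphi)\) correspond to \(\cos(\nu(\pi/3-\varphi))\) up to sign, with the same condition \(\cos(\nu\pi/3)=0\). This gives \eqref{eq:mixed-angular-spectrum} for both problems, and we would verify it directly as a check. Because \(\tau\) fixes \(r\), each separated solution \(R(r)\Phi(\varphi)\) is mapped to \(R(r)\Phi(\pi/3-\varphi)\). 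The Friedrichs selection of the regular branch \(r^{\nu_k}\) (Bessel \(J_{\nu_k}\)) rather than \(r^{-\nu_k}\) is determined by membership in the form domain, which \(U\) preserves. Hence the radial orders coincide.

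For the Green-current normalization, we would take the boundary flux \(\int_0^{\pi/3}(\bar u\,\partial_r v-v\,\partial_r\bar u)\,r\,d\varphi\) on the arc \(r=\varepsilon\). Under the change of variables \(\varphi\mapsto\pi/3-\varphi\), which has Jacobian of modulus one and preserves \(\partial_r\), this integral is invariant under \(U\). The normalized angular eigenfunctions, \(\sqrt{6/\pi}\sin(\nu_k\varphi)\) and its reflection, are mapped to each other, so the normalizations agree.

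The main obstacle is fixing precisely what ``radial Green-current normalization'' refers to, since that phrase is not defined before this point. I would simply adopt the flux integral above as the definition. Beyond that, the proof needs only the observation that conjugation by an isometry of the wedge intertwines the closed quadratic forms, and no analytic difficulty is expected.
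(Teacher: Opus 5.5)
Your proposal is correct and is essentially the paper's proof. The paper takes the angular reflection \((Jf)(\varphi)=f(\pi/3-\varphi)\), checks that it swaps the D/N and N/D boundary conditions and commutes with \(-\partial_\varphi^2\), and tensors with the identity in \(r\); this is exactly your pullback \(Uf=f\circ\tau\), and your form-domain formulation is, if anything, a cleaner way to get the intertwining of the Friedrichs wedge Laplacians.

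One harmless slip: the image of \(\sin(\nu_k\varphi)\) under \(\varphi\mapsto\pi/3-\varphi\) is \(\sin(\nu_k(\pi/3-\varphi))=(-1)^k\cos(\nu_k\varphi)\), which is the N/D eigenfunction. By contrast, \(\cos(\nu_k(\pi/3-\varphi))=(-1)^k\sin(\nu_k\varphi)\) is just the D/N eigenfunction itself.
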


\begin{proof}
Let \(J:L^2(0,\pi/3)\to L^2(0,\pi/3)\) be reflection of the angular variable,
\((Jf)(\varphi)=f(\pi/3-\varphi)\).  If \(f(0)=0\) and
\(f'(\pi/3)=0\), then \((Jf)'(0)=0\) and \((Jf)(\pi/3)=0\).  Hence, \(J\)
interchanges the D/N and N/D domains and commutes with \(-\partial_\varphi^2\).
Tensoring with the identity in the radial variable gives the unitary
equivalence of the wedge Laplacians.  Since \(J\) acts only in the angular variable and is unitary, it also
preserves the radial Green current.
\end{proof}

\subsection{The \texorpdfstring{$4\pi$}{4pi} cone and the mixed character}
After the two-sheeted monodromy resolution, a mixed vertex becomes a flat cone
with link \(Y=\mathbb R/(4\pi\mathbb Z)\).  The D/N reflection character picks
out the angular modes that are anti-periodic under rotation by \(2\pi/3\); in
Fourier notation these are the modes \(n\equiv3\pmod 6\), whose absolute link
frequencies are \(|n|/2=3k+3/2\).

For the Friedrichs Laplacian on the exact product cone, the stationary
scattering operator on the link is, up to a global phase,
\[
 S_c=-i\exp\!\bigl(-i\pi\sqrt{\Delta_Y}\bigr),
\tag{A.2}\label{eq:cone-scattering}
\]
see \cite{Yang2022ProductCones}.  On the mixed-character modes its multiplier
has modulus one; with the convention \eqref{eq:cone-scattering}, the multiplier
on the positive mode \(n=6k+3\) is \((-1)^k\).

\begin{proposition}[Local cone unitarity does not imply aperture coercivity]
\label{prop:local-cone-no-go}
Let \(\mathcal H_{\rm mix}\subset L^2(Y)\) be the mixed-character subspace and
let \(\Gamma\subset Y\) be a proper open aperture.  Assume that there exists a
nonempty open interval \(I\subset Y\) whose full orbit under the finite
rotation symmetry defining \(\mathcal H_{\rm mix}\) is disjoint from
\(\Gamma\).  Then, there is no \(c>0\) such that
\[
 \|1_\Gamma S_ca\|\ge c\|a\|,
 \qquad a\in\mathcal H_{\rm mix}.
\]
\end{proposition}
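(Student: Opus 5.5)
The plan is to construct an explicit sequence of normalized \(a_k\in\mathcal H_{\rm mix}\) with \(\|1_\Gamma S_ca_k\|\to0\). It is easiest to work on the output side: since \(S_c\) is unitary on \(L^2(Y)\) and, being a function of \(\Delta_Y\), commutes with rotations, it preserves \(\mathcal H_{\rm mix}\). On the mixed modes its multiplier has modulus one, so \(S_c\) restricts to a unitary of \(\mathcal H_{\rm mix}\) onto itself. It therefore suffices to find normalized \(b\in\mathcal H_{\rm mix}\) with \(\|1_\Gamma b\|\) arbitrarily small, and then set \(a=S_c^{-1}b\in\mathcal H_{\rm mix}\). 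With this choice \(\|a\|=\|b\|=1\) and \(\|1_\Gamma S_ca\|=\|1_\Gamma b\|\).

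To build \(b\), first describe \(\mathcal H_{\rm mix}\) concretely. Let \(R\) denote rotation by \(2\pi/3\) on \(Y=\mathbb R/4\pi\mathbb Z\), acting by \((Rf)(\varphi)=f(\varphi-2\pi/3)\). The Fourier mode \(e^{in\varphi/2}\) is multiplied by \(e^{-in\pi/3}\), which equals \(-1\) exactly when \(n\equiv3\pmod 6\). Hence \(\mathcal H_{\rm mix}\) is precisely the \((-1)\)-eigenspace of \(R\), that is, the anti-periodic functions \(f\circ R=-f\). Its orthogonal projection is the averaging operator
\[
 Pf=\tfrac16\sum_{j=0}^{5}(-1)^jR^jf ,
\]
which uses \(R^6=\mathrm{id}\) (rotation by \(4\pi\)) and \(R^3=-1\) on this subspace, so that the averaging is consistent.

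Now take \(g\in C_c^\infty(I)\) with \(g\ne0\) and set \(b=Pg\). The function \(b\) is supported in \(\bigcup_j R^j(I)\), which by hypothesis is disjoint from \(\Gamma\), so \(1_\Gamma b=0\) exactly. It remains to see that \(b\neq0\). If \(I\) is short, shorter than \(2\pi/3\), its six rotates are pairwise disjoint, and \(b\) restricted to \(I\) equals \(g/6\neq0\). If \(I\) is long, shrink it to a shorter subinterval first; this is harmless because the hypothesis only gets weaker. Normalizing \(b\) then gives a unit vector of \(\mathcal H_{\rm mix}\) annihilated by \(1_\Gamma S_c\circ S_c^{-1}\). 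Consequently \(\|1_\Gamma S_ca\|=0\) for the nonzero vector \(a=S_c^{-1}b\), and no \(c>0\) can work. This is in fact stronger than failure of coercivity, since \(1_\Gamma S_c\) has a nontrivial kernel on \(\mathcal H_{\rm mix}\).

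The main point to check carefully is the symmetry bookkeeping rather than any analysis. One must confirm that the "finite rotation symmetry defining \(\mathcal H_{\rm mix}\)" is the group generated by \(R\), of order six on the \(4\pi\) link, so that the orbit of \(I\) in the hypothesis is exactly the support of \(Pg\). One must also confirm that \(S_c\) commutes with \(R\). The latter is immediate because \(S_c\) is a Fourier multiplier, and its unimodular multiplier \((-1)^k\) on the mixed modes gives the invertibility of \(S_c\) on \(\mathcal H_{\rm mix}\) used above.
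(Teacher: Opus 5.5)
Your proposal is correct and follows essentially the same route as the paper. The paper also shrinks \(I\) so that its rotates are pairwise disjoint, and it extends a bump function \(g_0\) to the rotation orbit using the mixed sign character; this is exactly your \(Pg\) up to the factor \(1/6\). It then sets \(a=S_c^{-1}g\), using that \(S_c\) is unitary and preserves \(\mathcal H_{\rm mix}\), so that \(1_\Gamma S_c a=0\) with \(a\neq0\).
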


\begin{proof}
Choose a nonempty interval \(I_0\Subset I\) so small that its distinct
translates under the finite rotation symmetry are pairwise disjoint, and take
\(g_0\in C_c^\infty(I_0)\), \(g_0\ne0\).  Extend \(g_0\) to the finite
rotation orbit of \(I_0\) using the mixed sign character.  The resulting
\(g\in\mathcal H_{\rm mix}\) is nonzero and vanishes on \(\Gamma\).  Since
\(S_c\) is unitary and preserves \(\mathcal H_{\rm mix}\), set
\(a=S_c^{-1}g\).  Then \(a\ne0\) while
\(1_\Gamma S_ca=1_\Gamma g=0\).
\end{proof}

\begin{remark}[Geometric and strictly diffractive propagation]
Semiclassical propagation through cone points distinguishes the geometric
relation from strictly diffractive propagation, the latter exhibiting
improved regularity; see Melrose--Wunsch and Hintz
\cite{MelroseWunsch2004,Hintz2024Cone}. This distinction is relevant to the
arbitrary-open extension, where concentration may interact with the conic set
without being excluded geometrically by the observation region. For the
admissible class considered in \cref{thm:mixed-uniform}, the punctured cone
guard removes this residual conic mechanism before the regular-flow analysis
is applied.
\end{remark}


\section{Exact-eigenfunction compatibility}
\label{app:realizability}

Microlocal channel data extracted from a single exact eigenfunction inherit
global compatibility relations from the underlying scalar solution. Thus, for
any fixed finite microlocal decomposition, the local channel components cannot
be prescribed independently: propagation, reflection, branch identification,
and monodromy relate the traces associated with different sections. We record
this qualitative realizability property below.

\begin{lemma}[Compatibility of realizable channel data]
\label{lem:global-compatibility}
Let
\[
    A_h(u_h)=(a_{\alpha,h})_{\alpha\in\mathcal A}
\]
be a fixed finite family of microlocal traces extracted from a single exact
unfolded eigenfunction \(u_h\). Whenever two channel sections are connected by
a propagation step in the chosen finite microlocal construction, their traces
satisfy the corresponding propagation relation, modulo the remainders
introduced by the microlocal cutoffs and parametrices.

If two admissible propagation histories terminate at the same microlocal
section, the Cauchy data obtained along the two histories agree modulo the
accumulated remainders. Consequently, \(A_h(u_h)\) belongs to the
compatibility class determined by the regular propagation, reflection,
branch-identification, monodromy, and, whenever present in the construction,
conic-transition relations.
\end{lemma}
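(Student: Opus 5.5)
The argument rests on one observation: every channel trace in \(A_h(u_h)\) is a linear image of the same global function \(u_h\). Compatibility is then a bookkeeping statement about compositions of the microlocal operators used in the finite construction. Fix the finite microlocal construction once and for all: sections \(\Sigma_\alpha\), cutoffs \(\chi_\alpha\in C_c^\infty(T^*X^\circ)\), and trace operators \(\mathcal T_\alpha\) (restriction of \(\Op_h(\chi_\alpha)u_h\) and its normal derivative to \(\Sigma_\alpha\)), so that \(a_{\alpha,h}=\mathcal T_\alpha u_h\). The plan is first to prove the statement for a single propagation step, and then to iterate it along histories.

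\emph{Single step.} Let \(\Sigma_\alpha\) and \(\Sigma_\beta\) be joined by a regular segment of the Hamiltonian flow inside a flow box compactly contained in \(X^\circ\). Since \(P_hu_h=0\) exactly, the standard parametrix for \(P_h\) in that flow box gives a propagator \(G_{\beta\alpha}\) with
\[
\mathcal T_\beta u_h=G_{\beta\alpha}\mathcal T_\alpha u_h+R_{\beta\alpha,h}u_h .
\]
Here \(G_{\beta\alpha}\) is a semiclassical Fourier integral operator quantizing the flow between the two sections. The remainder is \(O(h^\infty)\) away from the boundary of the cutoff support, and is controlled by the commutator \([P_h,\Op_h(\chi_\alpha)]\) near it. This is the same commutator calculation as in \cref{prop:regular-propagation}, now done at the operator level rather than in the limit.

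The reflection, branch-identification, and monodromy relations are exact identities and need no parametrix. By \cref{prop:unfolding}, the restriction of \(\widetilde u_h\) to each triangle copy is the pullback of \(u_h\) by an isometry, up to a sign fixed by the reflection character. The two-sheeted resolution of \cref{prop:mixed-monodromy} makes these signs single-valued on \(X\). Hence traces on identified sections agree up to these signs with zero error.

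\emph{Conic transitions.} When a construction includes a conic transition, use the exact cone scattering operator \eqref{eq:cone-scattering} on the mixed-character modes of \cref{app:conic}. Its remainder comes from the difference between the exact product cone and the local model, which is smooth away from the vertex.

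\emph{Iteration along histories.} Given a history \(\alpha_0\to\cdots\to\alpha_m\), compose the one-step relations:
\[
\mathcal T_{\alpha_m}u_h=G_{\alpha_m\alpha_{m-1}}\cdots G_{\alpha_1\alpha_0}\mathcal T_{\alpha_0}u_h+\mathcal R_h u_h .
\]
The accumulated remainder \(\mathcal R_h\) is a finite sum of products of bounded operators with the individual remainders. Its size is bounded by the number of steps times the maximal remainder, which is fine because the construction is finite. If two histories end at the same section, the left-hand sides are literally the same quantity \(\mathcal T_{\alpha_m}u_h\). Subtracting the two expansions therefore shows that the transported Cauchy data agree modulo the sum of the two accumulated remainders. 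That is the stated compatibility, and membership of \(A_h(u_h)\) in the compatibility class follows by collecting all such relations.

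The main obstacle is the remainder bookkeeping. The propagators are bounded uniformly in \(h\) only on the microlocal region where the cutoffs equal one. The remainders are small only relative to mass outside that region, so the right way to state them is as \(O(h^\infty)\) plus terms supported where \(\nabla\chi_\alpha\ne0\). I would first try to nest the cutoffs, shrinking them at each step, so that every composition stays inside the region where the previous relation is exact modulo \(O(h^\infty)\). Near the cone points this requires the cone guard or an a priori Friedrichs-domain estimate to bound the nonsmooth model errors.
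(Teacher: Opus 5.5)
Your proposal is correct and follows essentially the same route as the paper. Each trace is a fixed operator applied to the same exact eigenfunction, and single regular or conic steps hold modulo cutoff/parametrix remainders. The reflection, branch-identification and monodromy relations are exact consequences of the unfolding in \cref{prop:unfolding}. Two histories ending at one section agree modulo accumulated remainders because both reconstruct the same quantity \(\mathcal T_{\alpha_m}u_h\).

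You make the operator-level bookkeeping (propagators, composed remainders, nested cutoffs) more explicit than the paper does. The only slip is calling the sign assignment a ``reflection character'', which by \cref{prop:mixed-monodromy} it is not. You do, however, correctly rely on the two-sheeted cover to make the signs single-valued.
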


\begin{proof}
Every component \(a_{\alpha,h}\) is obtained by applying a fixed microlocal
trace operator to the same exact eigenfunction \(u_h\). Along a regular segment
joining two channel sections, propagation of solutions to the eigenvalue
equation identifies the corresponding microlocal Cauchy data, modulo the
remainders introduced by localization and the chosen parametrix. The same
statement applies to a conic transition whenever such a transition is included
in the finite construction.

Now, consider two admissible propagation histories terminating at the same
microlocal section. Both histories reconstruct microlocal Cauchy data of the
same global scalar solution \(u_h\) at that section. Their difference is
therefore accounted for by the remainders accumulated along the two
constructions. The reflection and branch-identification relations follow from
the exact mixed-sector unfolding, while continuation around closed reflection
histories gives the corresponding monodromy relations. These identities show
that the family \(A_h(u_h)\) lies in the stated compatibility class.
\end{proof}

\begin{remark}[Compatibility and quantitative control]
\label{rem:compatibility-not-coercivity}
The lemma identifies the constrained class of channel data realizable by a
single exact eigenfunction. Passing from this qualitative compatibility to
arbitrary-open observability requires quantitative control that is uniform in
the high-frequency limit. In particular, pointwise injectivity of a
finite-dimensional compatibility map need not yield a uniform lower bound as
\(h\to0\), since its smallest singular value may degenerate. The quantitative
saddle-network exclusion property in \cref{def:qsn-exclusion} formulates the
additional uniform control required at this stage.
\end{remark}


\section{Reduction for the arbitrary-open endpoint}
\label{app:endpoint-reduction}

This appendix records the regular directional and periodic-cylinder reductions
used in the arbitrary-open extension of
\cref{prop:mixed-uniform-arbitrary}. They identify the part of a possible
high-frequency concentration mechanism that remains after the regular-flow
analysis.

\begin{proposition}[Regular directional reduction for arbitrary open sets]
\label{prop:directional-reduction-arbitrary}
Let \(W\subset\T\) be nonempty and open, and let \(u_j\) be a normalized
exact mixed-sector eigensequence such that
\[
    \|u_j\|_{L^2(W)}\longrightarrow0.
\]
Let \(\mu\) be an associated semiclassical measure on the regular phase space
\(S^*X^\circ\). Then every invariant component of \(\mu\) supported on
complete regular trajectories in a uniquely ergodic direction has zero mass.
Consequently, every nonzero component supported on complete regular
trajectories is carried by the interiors of maximal cylinders in completely
periodic directions. Components associated with trajectories approaching the
cone set are not eliminated by this regular reduction and form the residual
conic component.
\end{proposition}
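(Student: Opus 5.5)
The plan is to repeat the argument of \cref{sec:high-frequency}, but without the punctured cone guard. Instead of removing the cone set at the start, we separate the part of \(\mu\) that lives on complete regular trajectories from the residual part.

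First, we unfold the sequence with \cref{prop:unfolding} and normalize by \(N_{\rm sh}^{-1/2}\). This gives \(\|\widetilde u_j\|_{L^2(X)}=1\) and \(\|\widetilde u_j\|_{L^2(O)}\to0\), where \(O=\widetilde W\) is nonempty and open in \(X^\circ\). Extracting a semiclassical measure \(\mu\) on \(S^*X^\circ\), the inner-regularity argument before \eqref{eq:dark-measure} gives \(\mu(\pi^{-1}(O))=0\). We no longer claim \(\mu(S^*X^\circ)=1\), since mass may be lost at \(\mathcal C_X\); this loss is part of the residual conic component.

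\cref{prop:regular-propagation} needs no cone hypothesis, so \(\mu\) is supported on \(p=0\) and is invariant in every regular flow box.

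Next, let \(\mathcal G\subset S^*X^\circ\) be the Borel set of phase points whose full orbit is defined for all times and never meets \(\mathcal C_X\). This set is flow-invariant. We split
\[
\mu=\mu|_{\mathcal G}+\mu|_{S^*X^\circ\setminus\mathcal G}.
\]
Because \(\mathcal G\) is invariant and the flow is regular there, \(\mu|_{\mathcal G}\) is an invariant measure for the full directional flow. The complement consists of trajectories that reach a cone point in finite forward or backward time; this is by definition the residual conic component, and we set it aside.

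We then disintegrate \(\mu|_{\mathcal G}\) along the direction map \(\vartheta\), as in \cref{lem:directional-disintegration}. For almost every \(\theta\), the conditional \(\mu_\theta\) is a finite invariant measure on complete regular orbits in direction \(\theta\), and \(\mu_\theta(\pi^{-1}(O))=0\) for \(\nu\)-a.e. \(\theta\) because the integrand is nonnegative.

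\cref{prop:arithmetic-veech} splits the directions into two cases.
\begin{itemize}
\item \emph{Uniquely ergodic \(\theta\).} Extend \(\mu_\theta\) by zero to \(X\); this is legitimate since \(\mathcal C_X\) is finite and \(\mu_\theta\) lives on orbits avoiding it. The extension is a flow-invariant measure, so it equals a multiple of area measure. Then \(\mu_\theta(O)=c\,\operatorname{Area}(O)/\operatorname{Area}(X)\). Darkness forces \(c=0\), so the component has zero mass.
\item \emph{Completely periodic \(\theta\).} \(X\) decomposes into finitely many maximal cylinders plus finitely many saddle connections. The saddle connections end at cone points, so they lie outside \(\mathcal G\). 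Hence \(\mu_\theta\) is carried by the cylinder interiors, which is the stated conclusion.
\end{itemize}

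The main obstacle is measurability and the meaning of ``component'' when \(\mu\) may not be a probability measure. Specifically, one must check the following.
\begin{itemize}
\item \(\mathcal G\) is Borel. It is a countable intersection over rational times of the open sets where the flow is defined up to that time.
\item Invariance in each flow box upgrades to global invariance of \(\mu|_{\mathcal G}\). This follows by chaining finitely many flow boxes along compact orbit segments, as in the proof of \cref{lem:cone-guard-removal}.
\end{itemize}
Once this is done, everything else reuses \cref{prop:directional-reduction} verbatim. No statement is made about \(\mu|_{S^*X^\circ\setminus\mathcal G}\) or about mass lost at the cone points; these form the residual conic component.
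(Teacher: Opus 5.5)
Your proposal is correct and follows the paper's argument. You restrict \(\mu\) to its invariant part carried by complete regular orbits and disintegrate by direction. You rule out uniquely ergodic directions using unique ergodicity together with \(\mu(\pi^{-1}(O))=0\). You place completely periodic mass in cylinder interiors because saddle connections end at cone points. Your explicit set \(\mathcal G\) and its Borel measurability, the chaining of flow-box invariance into invariance of \(\mu|_{\mathcal G}\), and your remark that \(\mu\) need not have total mass one make precise what the paper states in a single line.
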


\begin{proof}
The support and local invariance argument of
\cref{prop:regular-propagation} applies in every regular flow box compactly
contained in \(X^\circ\). Restrict \(\mu\) to its invariant part supported on
complete regular trajectories and disintegrate this part with respect to the
translation direction.

Suppose that a uniquely ergodic direction carries nonzero conditional mass.
After normalization, the corresponding conditional measure is an invariant
probability measure supported on complete regular trajectories. Since the cone
set is finite and the component under consideration is supported on complete
regular trajectories, extending this measure by zero across the cone points
gives an invariant probability measure for the directional flow on \(X\).
Unique ergodicity therefore identifies it with normalized flat area measure.
On the other hand, the full lift
\[
    O=\widetilde W
\]
is nonempty and open, and hence
\[
    \frac{\operatorname{Area}(O)}{\operatorname{Area}(X)}>0.
\]
This contradicts the vanishing of the limiting observed mass on \(O\).

By the Veech dichotomy of \cref{prop:arithmetic-veech}, every remaining
direction carrying complete regular mass is therefore completely periodic.
In such a direction, the regular part of \(X\) decomposes into maximal
periodic cylinders separated by saddle connections. A complete regular
trajectory that avoids the cone set lies in the interior of one of these
cylinders. The components not covered by this argument are precisely those
whose propagation interacts with the conic set, and these constitute the
residual conic component.
\end{proof}

\begin{lemma}[Uniform two-collar estimate]
\label{lem:uniform-two-collar}
Let \(I_0\Subset(0,a)\), and let
\[
    I_\omega=(0,\varepsilon)\cup(a-\varepsilon,a),
    \qquad 0<\varepsilon<\frac a2.
\]
There exists a constant \(C=C(a,I_0,\varepsilon)>0\) such that, for every
\(h>0\), every \(\mu\in\mathbb R\), and every complex-valued solution \(v\)
of
\[
    -h^2v''+\mu v=0
    \qquad\text{on }(0,a),
\]
one has
\[
    \|v\|_{L^2(I_0)}
    \le
    C\|v\|_{L^2(I_\omega)}.
\]
The constant is independent of \(h\), \(\mu\), and \(v\).
\end{lemma}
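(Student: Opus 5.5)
The plan is to reduce to a single spectral parameter and then use a compactness argument in that parameter, with explicit asymptotics at the two ends. Dividing by \(h^2\), every solution satisfies \(v''=\kappa v\) on \((0,a)\) with \(\kappa:=\mu/h^2\in\mathbb R\). The pair \((h,\mu)\) therefore enters only through \(\kappa\). It suffices to show that
\[
 Q(\kappa):=\sup_{0\neq v\in\mathcal S_\kappa}\frac{\|v\|_{L^2(I_0)}^2}{\|v\|_{L^2(I_\omega)}^2}
\]
is bounded uniformly in \(\kappa\in\mathbb R\). Here \(\mathcal S_\kappa\) is the two-dimensional solution space.

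First I will check that \(Q(\kappa)\) is finite and continuous in \(\kappa\). Choose the basis \(v_1,v_2\) of \(\mathcal S_\kappa\) given by the Cauchy data \((1,0)\) and \((0,1)\) at \(x=a/2\); it depends analytically on \(\kappa\). In this basis both quadratic forms are represented by \(2\times2\) Gram matrices \(G_{I_0}(\kappa)\) and \(G_{I_\omega}(\kappa)\), which depend continuously on \(\kappa\). The matrix \(G_{I_\omega}(\kappa)\) is positive definite: a nonzero solution of a second-order ODE cannot vanish identically on an interval. Hence \(Q(\kappa)\) is the largest generalized eigenvalue of the pair \((G_{I_0},G_{I_\omega})\), which is continuous in \(\kappa\). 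So \(Q\) is bounded on every compact \(\kappa\)-interval, and it remains to treat \(\kappa\to-\infty\) and \(\kappa\to+\infty\).

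In the oscillatory regime \(\kappa=-k^2\) with \(k\to\infty\), write \(v=Ae^{ikx}+Be^{-ikx}\). For any interval \(J\),
\[
 \|v\|_{L^2(J)}^2=|J|\bigl(|A|^2+|B|^2\bigr)+2\operatorname{Re}\Bigl(A\bar B\int_J e^{2ikx}\,dx\Bigr),
\]
and the cross term is bounded by \(2|A||B|/k\). Hence for \(k\ge k_0(\varepsilon)\) we get \(\|v\|_{L^2(I_\omega)}^2\ge \varepsilon(|A|^2+|B|^2)\) and \(\|v\|_{L^2(I_0)}^2\le 2a(|A|^2+|B|^2)\), so \(Q\le 2a/\varepsilon\) there.

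In the exponential regime \(\kappa=m^2\) with \(m\to\infty\), use the basis \(f=e^{mx}\), \(g=e^{m(a-x)}\). Let \(\delta:=\operatorname{dist}(I_0,\{0,a\})>0\). On \(I_0\) both functions are at most \(e^{m(a-\delta)}\), so
\[
 \|Af+Bg\|_{L^2(I_0)}^2\le 2a\bigl(|A|^2+|B|^2\bigr)e^{2m(a-\delta)}.
\]
On the collars, \(\int_{I_\omega}f^2\) and \(\int_{I_\omega}g^2\) are each at least \(e^{2ma}(1-e^{-2m\varepsilon})/(2m)\), because \(f\) is large on the right collar and \(g\) on the left. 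The cross integral is \(\int_{I_\omega}fg=2\varepsilon e^{ma}\), which is negligible relative to the diagonal terms. Hence \(\|Af+Bg\|_{L^2(I_\omega)}^2\ge (|A|^2+|B|^2)e^{2ma}/(4m)\) for large \(m\). The ratio is then at most \(8am\,e^{-2m\delta}\to0\).

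Combining the two tails with continuity on the remaining compact \(\kappa\)-interval gives \(\sup_\kappa Q(\kappa)=:C^2<\infty\), with \(C\) depending only on \(a\), \(I_0\) and \(\varepsilon\). The main obstacle is making the tail estimates explicit enough that the matching compact window is fixed. I expect the only delicate bookkeeping to be the cross terms in the two asymptotic regimes, and the Gram-matrix continuity argument should absorb the transitional region \(\kappa\approx0\) without any further computation.
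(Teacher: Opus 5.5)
Your proposal is correct and follows essentially the same route as the paper's proof. Both reduce to the single parameter $\kappa=\mu/h^2$, control the cross terms explicitly with plane-wave and exponential bases in the two large-$|\kappa|$ tails, and treat the remaining compact window by positive definiteness and continuity of the $2\times2$ Gram matrices. The only cosmetic difference is that you use one basis analytic in $\kappa$ (Cauchy data at $x=a/2$) across the whole window, whereas the paper uses separate $\cos/\sin$ and $\cosh/\sinh$ families on the two sides of $\kappa=0$.
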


\begin{proof}
Set \(\alpha=\mu/h^2\). We treat the oscillatory, hyperbolic, and bounded
transition regimes.

Suppose first that \(\alpha=-k^2<0\) and \(k\ge 2/\varepsilon\). Then
\[
    v(x)=Ae^{ikx}+Be^{-ikx}.
\]
Since \(I_\omega\) is the union of two intervals of length \(\varepsilon\),
\[
    \left|\int_{I_\omega}e^{2ikx}\,dx\right|\le \frac{2}{k}.
\]
Consequently,
\[
\begin{aligned}
    \|v\|_{L^2(I_\omega)}^2
    &=2\varepsilon\bigl(|A|^2+|B|^2\bigr)
      +2\operatorname{Re}\!\left(
          A\overline B\int_{I_\omega}e^{2ikx}\,dx
        \right)\\
    &\ge
      \left(2\varepsilon-\frac{2}{k}\right)
      \bigl(|A|^2+|B|^2\bigr)
     \ge
      \varepsilon\bigl(|A|^2+|B|^2\bigr).
\end{aligned}
\]
On the other hand,
\[
    \|v\|_{L^2(I_0)}^2
    \le 2|I_0|\bigl(|A|^2+|B|^2\bigr),
\]
which gives the required estimate uniformly in this regime.
Next suppose that \(\alpha=k^2>0\). Write
\[
    v(x)=p\,e^{-k(a-x)}+q\,e^{-kx},
\]
and set
\[
    d_0:=\operatorname{dist}(I_0,\{0,a\})>0.
\]
For \(x\in I_0\),
\[
    |v(x)|^2
    \le 2e^{-2kd_0}\bigl(|p|^2+|q|^2\bigr),
\]
so
\[
    \|v\|_{L^2(I_0)}^2
    \le
    2|I_0|e^{-2kd_0}\bigl(|p|^2+|q|^2\bigr).
\tag{A.3}\label{eq:two-collar-hyperbolic-interior}
\]
Using \(|r+s|^2\ge \frac12|r|^2-|s|^2\) on the left collar gives
\[
\begin{aligned}
    \int_0^\varepsilon |v(x)|^2\,dx
    &\ge
    \frac{1-e^{-2k\varepsilon}}{4k}|q|^2
    -\varepsilon e^{-2k(a-\varepsilon)}|p|^2,
\end{aligned}
\]
and the analogous estimate on the right collar gives
\[
\begin{aligned}
    \int_{a-\varepsilon}^a |v(x)|^2\,dx
    &\ge
    \frac{1-e^{-2k\varepsilon}}{4k}|p|^2
    -\varepsilon e^{-2k(a-\varepsilon)}|q|^2.
\end{aligned}
\]
Hence,
\[
    \|v\|_{L^2(I_\omega)}^2
    \ge
    \left[
        \frac{1-e^{-2k\varepsilon}}{4k}
        -\varepsilon e^{-2k(a-\varepsilon)}
    \right]
    \bigl(|p|^2+|q|^2\bigr).
\]
There exist \(K_1>0\) and \(c_\varepsilon>0\), depending only on
\(a\) and \(\varepsilon\), such that for \(k\ge K_1\),
\[
    \|v\|_{L^2(I_\omega)}^2
    \ge
    \frac{c_\varepsilon}{k}
    \bigl(|p|^2+|q|^2\bigr).
\tag{A.4}\label{eq:two-collar-hyperbolic-collar}
\]
Combining \eqref{eq:two-collar-hyperbolic-interior} and
\eqref{eq:two-collar-hyperbolic-collar}, and using
\[
    \sup_{k\ge K_1} k e^{-2kd_0}<\infty,
\]
yields the desired uniform estimate for the large hyperbolic regime.
Let
\[
    K:=\max\{2/\varepsilon,K_1\}.
\]
It remains to treat the bounded transition range \(0\le |\alpha|^{1/2}\le K\),
including \(\alpha=0\). For the oscillatory family use the continuous basis
\[
    \phi_{1,k}(x)=\cos(kx),
    \qquad
    \phi_{2,k}(x)=
    \begin{cases}
        \dfrac{\sin(kx)}{k},&k>0,\\[1ex]
        x,&k=0,
    \end{cases}
\]
and for the hyperbolic family, use
\[
    \psi_{1,k}(x)=\cosh(kx),
    \qquad
    \psi_{2,k}(x)=
    \begin{cases}
        \dfrac{\sinh(kx)}{k},&k>0,\\[1ex]
        x,&k=0.
    \end{cases}
\]
For either family, let \(G_\omega(k)\) denote the \(2\times2\) Hermitian Gram
matrix of the corresponding basis in \(L^2(I_\omega)\). For each fixed
\(k\), this matrix is positive definite: otherwise a nonzero solution would
vanish on the nonempty open set \(I_\omega\), and uniqueness for the
second-order ODE would force it to vanish identically. The entries of
\(G_\omega(k)\) depend continuously on \(k\), including at \(k=0\).
Therefore, on every bounded interval \(0\le k\le K\),
\[
    \inf_{0\le k\le K}\lambda_{\min}(G_\omega(k))>0.
\]
The corresponding Gram matrices on \(I_0\) have uniformly bounded largest
eigenvalue on the same compact parameter interval. It follows that
\[
    \frac{\|v\|_{L^2(I_0)}}{\|v\|_{L^2(I_\omega)}}
\]
is uniformly bounded throughout the transition regime. Combining the three
cases proves the lemma.
\end{proof}

\begin{proposition}[Fixed-collar control in a periodic cylinder]
\label{prop:periodic-cylinder-control}
Let
\[
    C\simeq(0,a)_x\times(\mathbb R/L\mathbb Z)_y
\]
be a maximal flat cylinder, with the \(y\)-direction parallel to its periodic
trajectories. Let \(C_0\Subset C\), and suppose that \(\omega\subset C\)
contains collars
\[
    \bigl((0,\varepsilon)\cup(a-\varepsilon,a)\bigr)
    \times(\mathbb R/L\mathbb Z)
\]
for some \(0<\varepsilon<a/2\).\\
Let
\[
    E_\theta=e(hD_y),
    \qquad
    \widetilde E_\theta=\widetilde e(hD_y),
\]
where \(e,\widetilde e\in C_c^\infty(\mathbb R)\) are fixed and
\[
    \widetilde e=1
    \qquad\text{on a neighborhood of }\operatorname{supp}e.
\]
Then, there exists \(C_{C_0,\omega}>0\), independent of \(h\), such that every
exact solution
\[
    P_hu_h=0
\]
on \(C\) satisfies
\[
    \|E_\theta u_h\|_{L^2(C_0)}
    \le
    C_{C_0,\omega}
    \|\widetilde E_\theta u_h\|_{L^2(\omega)}.
\]
\end{proposition}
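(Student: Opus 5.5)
The plan is to reduce the estimate to the one-dimensional two-collar bound of \cref{lem:uniform-two-collar} by Fourier decomposition in the periodic variable \(y\). On the flat cylinder \(C\simeq(0,a)_x\times(\mathbb R/L\mathbb Z)_y\), expand
\[
 u_h(x,y)=\sum_{n\in\mathbb Z}v_n(x)\,e^{2\pi i n y/L}.
\]
The operators \(e(hD_y)\) and \(\widetilde e(hD_y)\) are Fourier multipliers in \(y\). They act diagonally, by \(e(2\pi hn/L)\) and \(\widetilde e(2\pi hn/L)\). Since \(P_h=-h^2(\partial_x^2+\partial_y^2)-1\) has constant coefficients on \(C\), the exact equation \(P_hu_h=0\) decouples into the family of ordinary differential equations
\[
 -h^2v_n''+\mu_n v_n=0\quad\text{on }(0,a),\qquad \mu_n:=\Bigl(\frac{2\pi hn}{L}\Bigr)^2-1\in\mathbb R .
\]
Interior regularity makes each \(v_n\) a classical solution on \((0,a)\). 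No boundary condition at \(x=0,a\) is needed, because the lemma treats arbitrary solutions.

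First I would shrink \(C_0\). Since \(C_0\Subset C\), it lies in \(I_0\times(\mathbb R/L\mathbb Z)\) for some interval \(I_0\Subset(0,a)\), and it suffices to bound the \(L^2\) norm of \(E_\theta u_h\) over that larger strip. By Plancherel in \(y\),
\[
 \|E_\theta u_h\|_{L^2(I_0\times\mathbb R/L\mathbb Z)}^2=L\sum_n|e(2\pi hn/L)|^2\|v_n\|_{L^2(I_0)}^2 .
\]
Apply \cref{lem:uniform-two-collar} to each \(v_n\). Its constant \(C(a,I_0,\varepsilon)\) is independent of \(h\), of \(\mu_n\) and of \(n\), which is exactly the uniformity required. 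This gives \(\|v_n\|_{L^2(I_0)}\le C\|v_n\|_{L^2(I_\omega)}\) for every \(n\).

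Next, on \(\operatorname{supp}e\) we have \(\widetilde e=1\), so \(|e|\le\|e\|_\infty|\widetilde e|\) pointwise. Summing over \(n\) gives
\[
 \|E_\theta u_h\|^2_{L^2(I_0\times\mathbb R/L\mathbb Z)}\le C^2\|e\|_\infty^2\,L\sum_n|\widetilde e(2\pi hn/L)|^2\|v_n\|^2_{L^2(I_\omega)}=C^2\|e\|_\infty^2\|\widetilde E_\theta u_h\|^2_{L^2(I_\omega\times\mathbb R/L\mathbb Z)} .
\]
The final equality is again Plancherel, on the collar strip. That collar strip is contained in \(\omega\) by hypothesis, so enlarging the domain on the right only increases the norm. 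This yields the claim with \(C_{C_0,\omega}=C(a,I_0,\varepsilon)\|e\|_\infty\).

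The main obstacle is the uniformity across all Fourier modes. Near glancing, where \(\mu_n\approx0\), and in the evanescent regime \(\mu_n>0\), the ODE changes character. This is handled entirely by the three-regime analysis already in \cref{lem:uniform-two-collar}. What remains to check is integrability: \(v_n\in L^2(I_\omega)\) up to the open ends of \((0,a)\). This holds if \(u_h\in L^2(C)\), which I would take as given, since \(u_h\) is an \(L^2\) eigenfunction on the compact surface and \(C\) has finite area. The cutoff \(\widetilde e\) is not actually needed because the modes decouple exactly. It only serves to put the right-hand side in the microlocalized form used later in the saddle-network reduction.
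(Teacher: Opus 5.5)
Your proof is correct and follows the paper's argument: Fourier decomposition in the periodic variable, a mode-by-mode application of \cref{lem:uniform-two-collar} with a constant uniform in \(n\) and \(h\), and Parseval. The paper first forms \(E_\theta u_h\) via \([P_h,E_\theta]=0\) and expands that, while you expand \(u_h\) and insert the multiplier on the coefficients, which amounts to the same thing. Your ordering of the last step is also the careful version of the paper's final inequality: you compare \(e\) with \(\widetilde e\) on the full-fiber collar strip \(I_\omega\times(\mathbb R/L\mathbb Z)\) and only then enlarge to \(\omega\), whereas a \(y\)-Fourier multiplier is controlled only on unions of complete periodic fibers.
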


\begin{proof}
In the flat cylinder coordinates,
\[
    P_h=-h^2(\partial_x^2+\partial_y^2)-1.
\]
Since \(E_\theta=e(hD_y)\) is a Fourier multiplier in the periodic variable,
\[
    [P_h,E_\theta]=0.
\]
Hence, \(v_h:=E_\theta u_h\) is again an exact solution of \(P_hv_h=0\).
Choose an interval \(I_0\Subset(0,a)\) containing the transverse projection
of \(C_0\). Expanding
\[
    v_h(x,y)
    =
    \sum_{n\in\mathbb Z}
    v_{n,h}(x)e^{2\pi iny/L},
\]
each Fourier coefficient satisfies
\[
    -h^2v_{n,h}''+
    \left[
        \left(\frac{2\pi hn}{L}\right)^2-1
    \right]v_{n,h}=0.
\]
Applying \cref{lem:uniform-two-collar} to every mode, with a constant
independent of \(n\) and \(h\), and then summing by Parseval gives
\[
    \|E_\theta u_h\|_{L^2(C_0)}
    \le
    C_{C_0,\omega}
    \|E_\theta u_h\|_{L^2(\omega)}.
\]
Finally, \(\widetilde e=1\) on \(\operatorname{supp}e\), so
\[
    E_\theta=E_\theta\widetilde E_\theta.
\]
Because the collar contains complete periodic fibers, the \(L^2\)-boundedness
of the Fourier multiplier \(E_\theta\), applied fiberwise in \(y\), yields
\[
    \|E_\theta u_h\|_{L^2(\omega)}
    \le
    \|e\|_{L^\infty}
    \|\widetilde E_\theta u_h\|_{L^2(\omega)}.
\]
Combining the two estimates proves the proposition.
\end{proof}

\begin{remark}[Dependence on the collar]
The constant \(C_{C_0,\omega}\) is associated with a fixed cylinder, compact
core, collar, and directional localization. No uniformity is asserted as the
collar width tends to zero. Thus the proposition transfers regular cylinder
control to a fixed neighborhood of the saddle-connection boundary but does
not by itself control concentration on the limiting saddle/conic network. The
latter is the additional quantitative mechanism isolated in
\cref{def:qsn-exclusion}.
\end{remark}





\begin{thebibliography}{10}
\expandafter\ifx\csname url\endcsname\relax
  \def\url#1{\texttt{#1}}\fi
\expandafter\ifx\csname urlprefix\endcsname\relax\def\urlprefix{URL }\fi
\expandafter\ifx\csname href\endcsname\relax
  \def\href#1#2{#2} \def\path#1{#1}\fi

\bibitem{MarklofRudnick2012}
J.~Marklof, Z.~Rudnick, Almost all eigenfunctions of a rational polygon are
  uniformly distributed, Journal of Spectral Theory 2~(1) (2012) 107--113.
\newblock \href {https://doi.org/10.4171/JST/23} {\path{doi:10.4171/JST/23}}.

\bibitem{HippiMikkelsen2026}
K.~Hippi, S.~Mikkelsen, Quantum mixing for eigenfunctions of rational polygons
  in configuration space, arXiv:2608.11728 (2026).
\newblock \href {http://arxiv.org/abs/2608.11728} {\path{arXiv:2608.11728}}.

\bibitem{FriedlandUeberschaer}
O.~Friedland, H.~Uebersch\"ar, Scarred quasimodes on translation surfaces,
  arXiv preprint arXiv:1812.08467 (2018).
\newblock \href {http://arxiv.org/abs/1812.08467} {\path{arXiv:1812.08467}}.

\bibitem{BurqZworski2019}
N.~Burq, M.~Zworski, Rough controls for schr\"odinger operators on 2-tori,
  Annales Henri Lebesgue 2 (2019) 331--347.
\newblock \href {https://doi.org/10.5802/ahl.19} {\path{doi:10.5802/ahl.19}}.

\bibitem{BurqGermainSorellaZhu2026}
N.~Burq, P.~Germain, M.~Sorella, H.~Zhu, Trace and observability inequalities
  for laplace eigenfunctions on the torus, Forum of Mathematics, Sigma 14
  (2026) e47.
\newblock \href {https://doi.org/10.1017/fms.2026.10199}
  {\path{doi:10.1017/fms.2026.10199}}.

\bibitem{AlphonseLafontaine2025}
P.~Alphonse, D.~Lafontaine, Observability estimates for the schr{\"o}dinger
  equation on the equilateral triangle, Annales de la Facult{\'e} des Sciences
  de ToulouseTo appear; arXiv:2509.24642 (2025).
\newblock \href {http://arxiv.org/abs/2509.24642} {\path{arXiv:2509.24642}}.

\bibitem{Siudeja2016}
B.~Siudeja, On mixed dirichlet--neumann eigenvalues of triangles, Proceedings
  of the American Mathematical Society 144~(6) (2016) 2479--2493.
\newblock \href {https://doi.org/10.1090/proc/12888}
  {\path{doi:10.1090/proc/12888}}.

\bibitem{Stempak2022}
K.~Stempak, The laplacian with mixed dirichlet--neumann boundary conditions on
  weyl chambers, Journal of Differential Equations 329 (2022) 348--370.
\newblock \href {https://doi.org/10.1016/j.jde.2022.05.005}
  {\path{doi:10.1016/j.jde.2022.05.005}}.

\bibitem{HassellHillairetMarzuola2009}
A.~Hassell, L.~Hillairet, J.~Marzuola, Eigenfunction concentration for
  polygonal billiards, Communications in Partial Differential Equations 34~(5)
  (2009) 475--485.
\newblock \href {https://doi.org/10.1080/03605300902768909}
  {\path{doi:10.1080/03605300902768909}}.

\bibitem{CekicGeorgievMukherjee2020}
M.~Ceki\'c, B.~Georgiev, M.~Mukherjee, Polyhedral billiards, eigenfunction
  concentration and almost periodic control, Communications in Mathematical
  Physics 377 (2020) 2451--2487.
\newblock \href {https://doi.org/10.1007/s00220-020-03741-0}
  {\path{doi:10.1007/s00220-020-03741-0}}.

\bibitem{Yang2022ProductCones}
M.~Yang, Propagation of polyhomogeneity, diffraction, and scattering on product
  cones, Journal of Spectral Theory 12~(2) (2022) 381--415.
\newblock \href {https://doi.org/10.4171/JST/404} {\path{doi:10.4171/JST/404}}.

\bibitem{Hintz2024Cone}
P.~Hintz, Semiclassical propagation through cone points, Analysis \& PDE
  17~(10) (2024) 3477--3550.
\newblock \href {https://doi.org/10.2140/apde.2024.17.3477}
  {\path{doi:10.2140/apde.2024.17.3477}}.

\bibitem{GutkinJudge2000}
E.~Gutkin, C.~Judge, Affine mappings of translation surfaces: geometry and
  arithmetic, Duke Mathematical Journal 103~(2) (2000) 191--213.
\newblock \href {https://doi.org/10.1215/S0012-7094-00-10321-3}
  {\path{doi:10.1215/S0012-7094-00-10321-3}}.

\bibitem{Veech1989}
W.~A. Veech, Teichm{\"u}ller curves in moduli space, eisenstein series and an
  application to triangular billiards, Inventiones Mathematicae 97~(2) (1989)
  553--584.
\newblock \href {https://doi.org/10.1007/BF01388890}
  {\path{doi:10.1007/BF01388890}}.

\bibitem{SmillieWeiss2008}
J.~Smillie, B.~Weiss, Veech's dichotomy and the lattice property, Ergodic
  Theory and Dynamical Systems 28~(6) (2008) 1959--1972.

\bibitem{MelroseWunsch2004}
R.~B. Melrose, J.~Wunsch, Propagation of singularities for the wave equation on
  conic manifolds, Inventiones Mathematicae 156~(2) (2004) 235--299.
\newblock \href {https://doi.org/10.1007/s00222-003-0339-y}
  {\path{doi:10.1007/s00222-003-0339-y}}.

\bibitem{BurqZworskiPartiallyRectangular}
N.~Burq, M.~Zworski, Eigenfunctions for partially rectangular billiards,
  Preprint (2003).
\newblock \href {http://arxiv.org/abs/math/0312098}
  {\path{arXiv:math/0312098}}.
  

\bibitem{Mar_2007}
Jeremy Marzuola, Eigenfunctions for Partially Rectangular Billiards, Communications in Partial Differential Equations 31~(5) (2006) 775--790.
\newblock \href {https://doi.org/10.1080/03605300500532939}
  {\path{doi:10.1080/03605300500532939}}.


\end{thebibliography}
\end{document}